\documentclass[a4paper,oneside]{article}
\usepackage[top=4cm,bottom=3.5cm,left=3.5cm,right=3.5cm]{geometry}
\usepackage[T1]{fontenc}
\usepackage[utf8]{inputenc}
\usepackage[UKenglish]{babel}
\usepackage[usenames,dvipsnames]{xcolor}
\usepackage{amsthm}
\usepackage{amsmath, amssymb}
\usepackage{bbm}
\usepackage{nicefrac}
\usepackage{mathtools}
\usepackage{mathdots}
\usepackage{hyperref}
\usepackage{longtable}
\usepackage[inline]{enumitem}
\newlist{choices}{enumerate*}{1}
\setlist[choices]{itemjoin = \hspace{1cm}, label=\arabic*.}

\DeclareMathOperator{\tp}{\mathfrak u}
\DeclareMathOperator{\smod}{smod}
\newcommand{\dcl}{\mathop{\mathbb N}}
\DeclareMathOperator{\id}{id}
\DeclareMathOperator{\st}{st}
\DeclareMathOperator{\PS}{PS}
\DeclarePairedDelimiter{\set}{\{}{\}}
\DeclarePairedDelimiter{\floor}{\lfloor}{\rfloor}
\DeclarePairedDelimiter{\abs}{\lvert}{\rvert}

\newcommand{\bla}[4]{{#1}_{#2}#3\ldots#3{#1}_{#4}}

\usepackage{thmtools}
\usepackage[capitalize]{cleveref}
\usepackage{nameref}
\Crefname{rem}{Remark}{Remarks}
\Crefname{alphthm}{Theorem}{Theorems}

\usepackage{centernot}
\usepackage[e]{esvect}
\newcommand{\freccia}{\vv}

\newcommand{\N}{\mathbb{N}}
\newcommand{\Q}{\mathbb{Q}}
\newcommand{\Z}{\mathbb{Z}}
\newcommand{\from}{:}
\newcommand{\inverse}{^{-1}}
\renewcommand{\star}{{}^\ast}
\renewcommand{\phi}{\varphi}
\renewcommand{\epsilon}{\varepsilon}

\theoremstyle{definition}
\newtheorem{thm}{Theorem}[section]
\newtheorem{lemma}[thm]{Lemma}
\newtheorem{oss}[thm]{Remark}
\newtheorem{rem}[thm]{Remark}
\newtheorem{teo}[thm]{Theorem}
\newtheorem{prop}[thm]{Proposition}
\newtheorem{pr}[thm]{Proposition}
\newtheorem{cor}[thm]{Corollary}
\newtheorem{co}[thm]{Corollary}
\newtheorem{fact}[thm]{Fact}
\newtheorem{defn}[thm]{Definition}
\newtheorem{defin}[thm]{Definition}
\newtheorem{esem}[thm]{Example}
\newtheorem{eg}[thm]{Example}
\newtheorem{notation}[thm]{Notation}
\newtheorem{problem}[thm]{Problem}

\newtheorem{alphthm}{Theorem}

\newtheoremstyle{named}{}{}{}{}{\bfseries}{.}{.5em}{\thmnote{#3}}
\theoremstyle{named}
\newtheorem*{namedtheorem}{Theorem}

\usepackage{titling}
\usepackage[affil-it]{authblk}
\usepackage{orcidlink}
\newcommand{\email}[1]{\href{mailto:#1}{\texttt{#1}}}

\makeatletter
\newcommand{\subjclass}[2][2020]{%
  \let\@oldtitle\@title%
  \gdef\@title{\@oldtitle\footnotetext{\hspace*{-2em}#1 \emph{Mathematics subject classification.} #2}}%
}
\newcommand{\keywords}[1]{%
  \let\@@oldtitle\@title%
  \gdef\@title{\@@oldtitle\footnotetext{\hspace*{-2em}\emph{Keywords:} #1.}}%
}
\makeatother

\author[1]{Mauro Di Nasso \orcidlink{0000-0001-6103-9775}%
  \thanks{email: \email{mauro.di.nasso@unipi.it}}}

\author[2]{Lorenzo Luperi Baglini\orcidlink {0000-0002-0559-0770}%
  \thanks{email: \email{lorenzo.luperi@unimi.it}}}

\author[1]{Marcello Mamino  \orcidlink{0000-0001-9037-5903}%
  \thanks{email: \email{marcello.mamino@unipi.it}}}

 \author[ ]{Rosario Mennuni \orcidlink{0000-0003-2282-680X}%
  \thanks{email: \email{R.Mennuni@posteo.net}}}

\author[3]{Mariaclara Ragosta \orcidlink{0009-0004-6641-4676}%
  \thanks{email: \email{mariaclara@kam.mff.cuni.cz}}}

\affil[1]{\small Dipartimento di Matematica, Universit\`a di Pisa, Largo Bruno Pontecorvo 5, 56127 Pisa, Italy}
\affil[2]{\small Dipartimento di Matematica, Università  di Milano, Via Saldini 50, 20133 Milano, Italy}
\affil[3]{\small Department of Applied Mathematics (KAM), Charles University, Malostranské náměstí 25, Praha 1, Czech
  Republic}

\title{Ramsey's Witnesses} 

\subjclass{Primary: 05D10. Secondary: 26E35, 03H15, 11U10, 05A17, 54D80}

\keywords{arithmetic Ramsey theory, partition regularity, ultrafilter, nonstandard analysis, tensor pair, Ramsey's witness}

\begin{document}

\maketitle

\begin{abstract}
We introduce the notion of \emph{Ramsey partition regularity}, a generalisation of partition regularity involving infinitary configurations. 
We provide characterisations of this notion in terms of certain ultrafilters 
related to tensor products and dubbed \emph{Ramsey's witnesses}; and we also consider their nonstandard counterparts
as pairs of hypernatural numbers, called \emph{Ramsey pairs}.
These characterisations are then used to determine whether various configurations involving polynomials and exponentials are Ramsey partition regular over the natural numbers. In particular, this provides negative answers to several questions recently posed by Kra, Moreira, Richter and Robertson.
\end{abstract}

\tableofcontents

{\footnotesize

MM and RM were supported by the project PRIN 2022: ``Models, sets and classifications'' Prot.\ 2022TECZJA. MDN, LLB, RM and MR were supported by the project PRIN 2022 ``Logical methods in combinatorics'', 2022BXH4R5, Italian Ministry of University and Research. We acknowledge the MIUR Excellence Department Project awarded to the Department of Mathematics, University of Pisa, CUP I57G22000700001. MR is supported by project 25-15571S of the Czech Science Foundation (GAČR). This work has been supported by Charles University Research Centre programme No.UNCE/24/SSH/026. MDN is a member of the INdAM research group GNSAGA.

}

\section{Introduction}

Since the 1970s, applications of ultrafilters to combinatorics have been widely studied,
producing a wealth of results, especially in the area of Ramsey theory; see the comprehensive monograph~\cite{hindmanAlgebraStoneCechCompactification2011}.
In recent years, the close connection between the space of ultrafilters $\beta\N$
and nonstandard extensions ${}^*\N$ of the natural numbers has been thoroughly investigated, 
providing an alternative approach that has revealed useful for formalising Ramsey properties 
in a simple and manageable way, and that has also made it possible to prove some new 
results in the area of \emph{arithmetic Ramsey theory}.

The latter is a branch of combinatorics concerned with the ``abundance'' of certain arithmetic patterns in the natural numbers. A typical problem in this area is to determine whether a given configuration is ubiquitous enough that one is forced to find instances of it in some piece of every finite partition of $\mathbb N$.

In this area, it is customary to call partitions \emph{colourings}. For instance, a classical theorem by I.~Schur~\cite{schurUberKongruenz} states that, whenever the natural numbers are coloured with finitely many colours, say $\mathbb N=\bla C1\cup r$, then one of the $C_i$ contains a triple of distinct elements $a,b,c$ such that $a+b=c$. Such a triple is called a \emph{monochromatic} solution of the equation $x+y=z$, and the existence of monochromatic solutions in all colourings is the statement that the aforementioned equation is \emph{partition regular}.

It is natural to ask which other equations have the same property. Albeit the linear case is well-understood by a classical result of R.~Rado~\cite{radoStudienZurKombinatorik1933}, the polynomial one is still very far from being settled; for instance, whether the Pythagorean equation $x^2+y^2=z^2$ is partition regular is still unknown, and one of the long-standing open problems in the area.

The notion of partition regularity is not limited to solutions to Diophantine equations. In general, a family $\mathcal{F}$ of subsets of a set $X$  is \emph{partition regular} (PR) over $X$ if it is closed under taking supersets and, for all finite colourings $X=C_{1}\cup\ldots\cup C_{r}$ there exists $i\leq r$ such that $C_{i}\in\mathcal{F}$; and it is \emph{strongly partition regular} (SPR) over $X$ if, furthermore, for all $Y\in \mathcal{F}$ the family $\mathcal{F}\cap \mathcal P(Y)$ is PR over $Y$, i.e.\ if for all finite colourings $Y=C_{1}\cup\ldots\cup C_{r}$ there exists $i\leq r$ such that $C_{i}\in\mathcal{F}$.

If $P$ is some property that can be satisfied by a set, with a small abuse we will say that the property $P$ is PR if the family $\mathcal{F}_{P}\coloneqq \{A\subseteq X\mid P \text{ is true for } A\}$ is PR. So, for example, Schur's Theorem says that the property of containing some $x,y,z$ with $x+y=z$ is partition regular. Another celebrated example is van der Waerden's Theorem~\cite{vanderwaerdenBeweisBaudetschenVermutung1927}, establishing strong partition regularity for containing arbitrarily long arithmetic progressions.

Problems in arithmetic Ramsey theory have been tackled by employing a variety of methods; as mentioned above, one that attracted significant attention in recent years is to exploit the strict connection between spaces of ultrafilters and their algebra and \emph{nonstandard extensions} of $\N$. We now briefly recall what this amounts to; see e.g.~\cite{dinassoHypernaturalNumbersUltrafilters2015, luperibagliniNonstandardCharacterisationsTensor2019, dinassoNonstandardMethodsRamsey2019} for a more detailed treatment.

The link between partition regularity of a family $\mathcal F$ of subsets of $X$ and ultrafilters on $X$ stems from the well-known facts that
\begin{enumerate}
\item $\mathcal{F}$ is PR over $X$ if and only if there exists an ultrafilter $u$ over $X$  such that $u\subseteq\mathcal{F}$; such a $u$ is called a \emph{witness} of the partition regularity of $\mathcal F$; and
\item $\mathcal{F}$ is SPR over $X$ if and only if $\mathcal F\ne \emptyset$ and for all $Y\in\mathcal{F}$ there is a witness $u\subseteq\mathcal F$ containing  $Y$.
\end{enumerate}

As for nonstandard extensions of $\mathbb N$, recall that these are \emph{elementary extensions} of the natural numbers, that is, larger structures satisfying the \emph{transfer principle} for first-order formulas.\footnote{Here $\mathbb N$ is equipped with its full first-order structure, that is, with relation symbols for every subset of each of its Cartesian powers.} For example, this means that, for every subset $A\subseteq \mathbb N^k$, there is an associated nonstandard extension $\star A\subseteq \star \mathbb N^k$ that satisfies the same first-order properties as $A$: e.g., if $A$ is the set of even natural numbers, then $\star A$ is precisely the set of those elements of $\star \mathbb N$ of the form $2\alpha$, for a suitable $\alpha\in \star \mathbb N$. Sets of the form $\star A$, and more generally \emph{internal} subsets of $\star \mathbb N$, are well-behaved in a number of senses. Crucially, they satisfy the same elementary properties as subsets of $\mathbb N$, e.g.\ every nonempty internal subset has a least element. Furthermore, they obey the \emph{overspill} principle: whenever an internal set contains arbitrarily large natural numbers, then it must also contain an element larger than $\mathbb N$; so, for instance, $\mathbb N$ is not internal.

Elements of some nonstandard extension $\star \mathbb N$ are called \emph{hypernatural numbers}, and they and their tuples bear a natural relation to ultrafilters on $\mathbb N$ and its Cartesian powers. Namely, for  $\left(\alpha_{1},\ldots,\alpha_{k}\right)\in\star{\N}^{k}$, the set\footnote{Other notations common in the literature are $u_{\left(\alpha_{1},\ldots,\alpha_{k}\right)}$ and $\operatorname{tp}(\alpha_{1},\ldots,\alpha_{k})$. }
\[\tp\left(\alpha_{1},\ldots,\alpha_{k}\right)\coloneqq \{A\subseteq \N^{k}\mid \left(\alpha_{1},\ldots,\alpha_{k}\right)\in\star{A}\}\]
belongs to $\beta\N^{k}$---that is, it is an ultrafilter on $\mathbb N^k$. Conversely, every $u\in\beta\N^{k}$ is of the form $u=\tp\left(\alpha_{1},\ldots,\alpha_{k}\right)$ for a suitable  $\left(\alpha_{1},\ldots,\alpha_{k}\right)\in\star{\N^{k}}$, provided  $\star\mathbb N$  is $\mathfrak c^+$-saturated.

Throughout the paper, we work in such a $\star \mathbb N$. We write $\left(\alpha_{1},\ldots,\alpha_{k}\right)\sim \left(\beta_{1},\ldots,\beta_{k}\right)$, and say that these tuples are \emph{equivalent}, whenever $\tp\left(\alpha_{1},\ldots,\alpha_{k}\right)=\tp\left(\beta_{1},\ldots,\beta_{k}\right)$. The equivalence classes of this relation correspond, as explained above, to ultrafilters  $u\in \beta \mathbb N^k$. For each such $u$, the associated $\sim$-equivalence class called the \emph{monad} of $u$, and its elements \emph{generators} of $u$. It is well-known that combinatorial properties of $u$ can be rephrased in terms of properties of its monad. For finitary configurations, that is, properties expressible by a first-order formula, this can be done by using the following fundamental property. See e.g.~\cite[Proposition~9.1]{dinassoNonstandardMethodsRamsey2019}, or~\cite[Theorem 5.4]{luperibagliniNonstandardCharacterisationsTensor2019} for a more general version.

\begin{fact}\label{fact:fundprop}
  A formula  $\phi(\bla x1,n)$ is PR, that is, for every finite colouring $\mathbb N=\bla C1\cup r$ there is a monochromatic $(\bla a1,n)$ such that $\mathbb N\models\phi(\bla a1,n)$, if and only if there are $\bla \alpha1\sim n$ such that $\star \mathbb N\models\phi(\bla \alpha1,n)$.
\end{fact}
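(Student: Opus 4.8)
The plan is to translate the partition regularity of $\phi$ into the partition regularity of an associated family of subsets of $\N$, invoke the witnessing-ultrafilter correspondence recalled as item~(1) above, and then move between an ultrafilter and its generators by a saturation argument. Set
\[
  \mathcal F \coloneqq \set{A \subseteq \N \mid \text{there exist } a_1, \ldots, a_n \in A \text{ with } \N \models \phi(a_1, \ldots, a_n)}.
\]
This family is closed under supersets, and by definition a colour class $C_i$ contains a monochromatic tuple satisfying $\phi$ precisely when $C_i \in \mathcal F$; hence $\phi$ is PR if and only if $\mathcal F$ is PR.

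For the implication ($\Leftarrow$), suppose $\alpha_1 \sim \ldots \sim \alpha_n$ with $\star\N \models \phi(\alpha_1, \ldots, \alpha_n)$, and let $u \coloneqq \tp(\alpha_1) = \cdots = \tp(\alpha_n)$. Given a finite colouring $\N = C_1 \cup \ldots \cup C_r$, exactly one class $C_i$ lies in the ultrafilter $u$, so $\alpha_j \in \star C_i$ for every $j$. The internal tuple $(\alpha_1, \ldots, \alpha_n)$ then witnesses in $\star\N$ the transfer of the sentence $\exists y_1 \ldots y_n \bigl(\phi(y_1, \ldots, y_n) \wedge \bigwedge_{j=1}^{n} y_j \in C_i\bigr)$; transferring this sentence back down to $\N$ produces standard $a_1, \ldots, a_n \in C_i$ satisfying $\phi$, i.e.\ a monochromatic solution.

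The substance is in the implication ($\Rightarrow$). Assuming $\phi$---equivalently $\mathcal F$---is PR, item~(1) yields an ultrafilter $u \subseteq \mathcal F$, so that every $A \in u$ contains some $a_1, \ldots, a_n \in A$ with $\phi(a_1, \ldots, a_n)$. I would then consider the type
\[
  p(y_1, \ldots, y_n) \coloneqq \set{\phi(y_1, \ldots, y_n)} \cup \set{y_j \in \star A \mid A \in u,\ 1 \le j \le n}
\]
and realise it in $\star\N$. The key point is finite satisfiability, and this is exactly where $u \subseteq \mathcal F$ enters: a finite fragment of $p$ mentions only finitely many $A_1, \ldots, A_m \in u$, and since $A \coloneqq A_1 \cap \cdots \cap A_m \in u \subseteq \mathcal F$ there are standard $a_1, \ldots, a_n \in A$ with $\phi(a_1, \ldots, a_n)$; by transfer these elements satisfy the fragment in $\star\N$. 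As $p$ has cardinality at most $\mathfrak c$, the assumed $\mathfrak c^+$-saturation of $\star\N$ provides a realisation $(\alpha_1, \ldots, \alpha_n)$. Each $\alpha_j$ belongs to $\star A$ for all $A \in u$, whence $u \subseteq \tp(\alpha_j)$ and, both being ultrafilters, $u = \tp(\alpha_j)$; therefore $\alpha_1 \sim \ldots \sim \alpha_n$ and $\star\N \models \phi(\alpha_1, \ldots, \alpha_n)$.

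The only genuine obstacle is this finite-satisfiability check; the remaining ingredients are transfer and the ultrafilter correspondence. The one subtlety worth flagging is that the conditions $y_j \in \star A$ force $\tp(\alpha_j)$ to equal $u$ rather than merely to contain it, which is immediate from maximality of ultrafilters but is what guarantees the generators are genuinely equivalent.
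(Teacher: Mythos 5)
Your proof is correct. The paper itself gives no proof of \Cref{fact:fundprop}, stating it as known and citing \cite[Proposition~9.1]{dinassoNonstandardMethodsRamsey2019}, and your argument --- reducing $\phi$ being PR to partition regularity of the family $\mathcal F$, taking a witnessing ultrafilter $u\subseteq\mathcal F$, realising the finitely satisfiable type $\set{\phi(y_1,\ldots,y_n)}\cup\set{y_j\in A\mid A\in u,\ 1\le j\le n}$ by $\mathfrak c^+$-saturation so that each $\tp(\alpha_j)=u$ by maximality, and using downward transfer for the converse --- is exactly the standard proof found in the cited sources, and the same compactness-plus-witness mechanism that the paper generalises in its proof of \Cref{teo: nonstandard first order properties RW}.
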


Above, $\star \mathbb N\models\phi(\bla \alpha1,n)$ means that the formula $\phi(\bla \alpha1,n)$ holds in $\star \mathbb N$.
Combining this with the ultrafilter characterisation of partition regularity we see that, for instance, Schur's Theorem is, in this language, the statement that there are $\alpha\sim\beta\sim\gamma$ in $\star \mathbb N$ such that $\star \mathbb N\models\alpha+\beta=\gamma$.

Over the years Schur's result has been strengthened in various directions.
Most notably, in~\cite{hindmanFiniteSumsSequences1974} N.~Hindman proved the following infinitary extension:

\begin{namedtheorem}[Hindman's Theorem]\label{teo: Hindman} 
For every finite colouring of $\N$ there exists an infinite increasing sequence $(x_i)_{i\in \mathbb N}$
such that the set of all finite sums $\text{FS}((x_i)_{i\in \mathbb N})=\{x_{i_1}+\ldots+x_{i_k}\mid k\in \mathbb N, i_1<\ldots<i_k\}$
is monochromatic.
\end{namedtheorem}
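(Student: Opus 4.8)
The plan is to prove Hindman's Theorem using the ultrafilter/nonstandard machinery set up in the excerpt, specifically by exploiting idempotent ultrafilters in the semigroup $(\beta\N,+)$. First I would recall that $\beta\N$ carries a natural semigroup operation extending addition on $\N$, defined on $u,v\in\beta\N$ by $A\in u+v$ iff $\{x\in\N\mid \{y\in\N\mid x+y\in A\}\in v\}\in u$. With respect to a suitable topology this operation is right-continuous and $\beta\N$ is a compact Hausdorff space, so by the Ellis--Namakura lemma the semigroup admits an \emph{idempotent} ultrafilter, i.e.\ some $u\in\beta\N$ with $u+u=u$. In the nonstandard language of the excerpt, such a $u=\tp(\alpha)$ corresponds to a hypernatural $\alpha$ with the property that $\alpha\sim\beta$ and $\alpha+\beta$ also generates $u$ whenever $\beta$ is an ``independent copy'' of $\alpha$ over it; the key feature is that $u$ is closed under the relevant finite-sum operation at the level of sets.

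The heart of the argument is the following extraction step. Fix an idempotent $u$ and let $A\in u$ be an arbitrary member; since $u$ will be placed inside the colour class $C_i\in u$ guaranteed by the pigeonhole principle (exactly one colour class lies in any given ultrafilter), it suffices to build the sequence $(x_i)$ inside $A$. The crucial lemma is that for every $A\in u$ the set $A^\star\coloneqq\{x\in A\mid A-x\in u\}$ again lies in $u$, where $A-x=\{y\mid x+y\in A\}$; this follows directly from idempotency $u=u+u$ by unwinding the definition of the sum. Moreover one checks that if $x\in A^\star$ then $(A-x)^\star\cap A^\star\in u$, so the family of ``good'' sets is closed under this shifting operation. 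I would then recursively choose $x_1\in A^\star$, set $A_1=A\cap (A-x_1)$, pick $x_2\in A_1^\star$ with $x_2>x_1$, and continue, at each stage intersecting with the shift by the newly chosen element and staying inside a member of $u$ so that infinitely many choices remain available.

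The verification that this recursion yields $\text{FS}((x_i))\subseteq A$ is the combinatorial payoff: by construction each partial sum $x_{i_1}+\ldots+x_{i_k}$ with $i_1<\ldots<i_k$ lies in $A$, because at the moment $x_{i_1}$ was chosen we had $x_{i_1}\in A$ and $A-x_{i_1}\in u$, and the subsequent sums were forced to land in $A-x_{i_1}$, hence adding $x_{i_1}$ returns them to $A$; an induction on $k$ formalises this. Applying the whole construction to $A=C_i$ then produces the desired monochromatic FS-set, completing the proof.

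The hard part will be establishing the existence of the idempotent ultrafilter and the right-continuity needed for the Ellis--Namakura lemma, together with the careful bookkeeping in the recursive extraction so that the finite-sum closure is genuinely maintained at every step; the topological algebra of $\beta\N$ does the real work, while the nonstandard reformulation via $\tp$ and the equivalence relation $\sim$ serves mainly to translate ``idempotent'' into the statement that $\alpha$ and $\alpha+\beta$ are equivalent for an independent generator $\beta$, aligning the argument with \Cref{fact:fundprop}.
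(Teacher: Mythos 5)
Your proposal is correct, but note that the paper does not prove Hindman's Theorem at all: it is stated as a known result with a citation to Hindman's original 1974 paper, whose proof is a long elementary combinatorial argument. What you have reconstructed is the standard Galvin--Glazer proof via idempotents in $(\beta\N,+)$, and it is sound: Ellis--Namakura gives an idempotent $u=u+u$ (necessarily nonprincipal, which is what justifies ``infinitely many choices remain available''), exactly one colour class $C_i$ lies in $u$, and the extraction works because of the identity $(A-x)^\star=A^\star-x$ for $x\in A^\star$, which is precisely the closure property you state. One small bookkeeping remark: after choosing $x_1\in A^\star$ you set $A_1=A\cap(A-x_1)$, but for the induction to be self-sustaining you must choose $x_2\in A_1^\star$ (as you in fact do) rather than merely $x_2\in A_1$, since it is membership in the starred set that guarantees the next shift $A_1-x_2$ lies in $u$; with $A_{k+1}=A_k\cap(A_k-x_{k+1})$ and $x_{k+1}\in A_k^\star$ throughout, the induction on $k$ showing $\mathrm{FS}((x_i))\subseteq A$ goes through as you describe. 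Compared with Hindman's original proof, your route buys brevity and structural clarity at the price of Zorn's Lemma and the topological algebra of $\beta\N$; it also aligns well with the rest of the paper, which uses additively idempotent ultrafilters in the same way (e.g.\ in the proof of \Cref{pr:ramseydiff}). Your nonstandard gloss is slightly vague but essentially right: in the paper's language, $\tp(\alpha)$ is idempotent if and only if $\alpha+\beta\sim\alpha$ whenever $(\alpha,\beta)$ is a tensor pair of generators of $\tp(\alpha)$, which is the precise meaning of your ``independent copy''.
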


\nameref{teo: Hindman} is the prototype of the results on the existence
of infinite monochromatic configurations, which can be seen as
leaps from the realm of finite mathematics to the world of infinite combinatorics.  

 Typically these statements are closely related, in one way or another, to Ramsey's Theorem~\cite{ramseyProblemFormalLogic1930}. 
Let us recall the $2$-dimensional case of the latter which, when applied to $X=\N$, is the case that we will study in the rest of the paper. Write $[X]^2\coloneqq \set{\set{x_1,x_2}\mid x_1, x_2\in X, x_1\ne x_2}$.

\begin{namedtheorem}[Ramsey's Theorem for pairs]\label{teo: Ramsey} Let $X$ be an infinite set. For all finite partitions $[X]^{2}=C_{1}\cup\ldots\cup C_{r}$ there exist $i\leq r$ and an infinite $H\subseteq X$ such that $[H]^2\subseteq C_i$.\end{namedtheorem}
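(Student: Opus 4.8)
The plan is to prove the statement by an iterated pigeonhole construction, working uniformly in the number of colours $r$ so that no separate reduction to the two-colour case is required. Fix a colouring $c\colon [X]^2\to\{1,\ldots,r\}$. The whole argument produces a single countable sequence of points together with a record of how they interact, from which the homogeneous set is then extracted.

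First I would build, by recursion, a sequence of distinct points $x_0,x_1,\ldots\in X$, a decreasing chain of infinite sets $X=A_0\supseteq A_1\supseteq\cdots$, and colours $d_0,d_1,\ldots\in\{1,\ldots,r\}$, as follows. Given an infinite $A_n$, pick any $x_n\in A_n$ and partition $A_n\setminus\{x_n\}$ according to the value $c(\{x_n,y\})$; since there are only finitely many colours and $A_n\setminus\{x_n\}$ is infinite, at least one colour class is infinite. Let $A_{n+1}$ be such an infinite class and let $d_n$ be its colour, so that $c(\{x_n,y\})=d_n$ for every $y\in A_{n+1}$, and note $A_{n+1}\subseteq A_n\setminus\{x_n\}$, which forces the $x_n$ to be pairwise distinct. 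The decisive feature is that whenever $i<j$ we have $x_j\in A_j\subseteq A_{i+1}$, and therefore $c(\{x_i,x_j\})=d_i$: the colour of a pair drawn from the sequence depends only on its smaller index.

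Next I would apply the pigeonhole principle a second time, now to the sequence of colours $(d_n)_n$: some colour $d$ occurs infinitely often, say $d_{i_0}=d_{i_1}=\cdots=d$ with $i_0<i_1<\cdots$. Setting $H\coloneqq\{x_{i_k}\mid k\in\mathbb{N}\}$, for any $a<b$ the feature recorded above gives $c(\{x_{i_a},x_{i_b}\})=d_{i_a}=d$. Hence $[H]^2\subseteq C_d$ and $H$ is the desired infinite homogeneous subset of $X$.

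There is no deep obstacle here; the two points meriting attention are that an infinite colour class genuinely exists at each recursion step (this is exactly the finiteness of the palette tested against an infinite ground set, and is where the hypothesis that $X$ is infinite is used), and that the single decreasing chain $(A_n)_n$ is what converts a statement about \emph{pairs} into two successive \emph{unary} pigeonhole applications. I would also note that the recursion can be packaged transparently through a nonprincipal ultrafilter $U$ on $X$: for each $x$ let $\chi(x)$ be the unique colour $i$ with $\{y\mid c(\{x,y\})=i\}\in U$, choose the colour $i$ with $B\coloneqq\{x\mid\chi(x)=i\}\in U$, and then select the $x_n$ inside the $U$-large sets $B\cap\bigcap_{k<n}\{y\mid c(\{x_k,y\})=i\}$. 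This replaces the two pigeonhole steps by two uses of the ultrafilter property and yields homogeneity in colour $i$ directly, which is the form most congenial to the methods developed in the rest of this paper.
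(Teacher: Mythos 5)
Your proposal is correct. Its primary argument, the classical double-pigeonhole construction with the decreasing chain $A_0\supseteq A_1\supseteq\cdots$ and the ``colour depends only on the smaller index'' invariant, is a genuinely different route from the one the paper takes: the paper never proves the theorem by bare recursion, but instead derives it from the ultrafilter statement that $u\otimes u\in\mathrm{RW}$ for every nonprincipal $u$ (\Cref{thm:tensors witness Ramsey Theorem}, following the proof cited from Chang--Keisler); one then picks the colour class $C_i$ lying in $u\otimes u$ and extracts $H$ from it. Interestingly, your closing remark is exactly this proof in light disguise: your colour $i$ with $B=\{x\mid\chi(x)=i\}\in U$ is precisely the colour with $C_i\in U\otimes U$ (in the paper's notation, $B$ is the set $A_u$ for $A=C_i$), and your recursive choice of $x_n$ in $B\cap\bigcap_{k<n}\{y\mid c(\{x_k,y\})=i\}$ matches the paper's choice of $x_{k+1}\in A_u\cap\bigcap_{i\le k}A_{x_i}$ verbatim. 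As for what each approach buys: the elementary version is self-contained and makes the combinatorial mechanism transparent, but it must shrink the ambient set at every step and needs a second pigeonhole application to stabilise the colours $d_n$; the ultrafilter version fixes the winning colour once and for all, keeps every relevant set large simultaneously (so the second pigeonhole step disappears), and, more importantly for this paper, it identifies the witnessing ultrafilter as a tensor square, which is the inclusion $\mathrm{TS}\subseteq\mathrm{RW}$ on which the whole theory of Ramsey's witnesses here is built. You correctly flag the two delicate points (existence of an infinite colour class at each step, and the conversion of a pairs statement into two unary pigeonhole applications); in the ultrafilter form the analogous point is that nonprincipality makes every $U$-large set infinite, so fresh points can always be selected.
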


For convenience, when $X=\N$, we identify $[H]^{2}$ with $\{(h_{1},h_{2})\in H^{2}\mid h_{1}<h_{2}\}$. 

A direct application of \nameref{teo: Ramsey} shows that the family $\mathcal F$ of those $A\subseteq \mathbb N$ such that there is an infinite $H$ with $\set{h_1+h_2\mid (h_1,h_2)\in [H]^2}\subseteq A$ is SPR. The analogous statement also holds if sums are replaced by products but, notably, sums and products cannot be combined. Indeed, a theorem proved by N.~Hindman in~\cite[Theorem~2.11]{hindmanPartitionsPairwiseSums1984a}, another proof of which recently appeared in~\cite[Theorem~3.1]{hindmanNewResultsMonochromatic2023}, states the following.

\begin{namedtheorem}[Pairwise Sum-Product Theorem]\label{thm:pwsp}
Let $\mathcal F$ be the family of those $A\subseteq \mathbb N$ such that 
 \[\exists H\subseteq \N \ H\text{ is infinite and } \forall (x,y)\in [H]^{2}\; (x+y, x\cdot y\in A).\] 
Then $\mathcal{F}$ is not PR.\end{namedtheorem}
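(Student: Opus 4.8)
The plan is to establish non-partition-regularity by exhibiting an explicit finite colouring $\chi\from\N\to\{1,\ldots,r\}$ no cell of which lies in $\mathcal F$; equivalently, to show that for \emph{every} infinite $H\subseteq\N$ the set $\{x+y,\ x\cdot y \mid (x,y)\in[H]^{2}\}$ fails to be monochromatic. I would argue by contradiction, assuming that some infinite $H$ has all its pairwise sums and products inside a single colour class $A$. A useful preliminary observation is that this hypothesis is inherited by every infinite $H'\subseteq H$, since $[H']^{2}\subseteq[H]^{2}$; so I may first apply \nameref{teo: Ramsey} together with pigeonhole to pass to a well-structured subset on which the elements grow as rapidly as I please and on which various auxiliary colourings are constant.

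The engine of the argument is a \emph{scale-separation} colouring, whose prototype is $\chi(n)=\lfloor\log_{2}\log_{2}n\rfloor\bmod 2$. The point is that sums and products interact very differently with the doubly-logarithmic scale: since $x+y\in(y,2y)$ whenever $x<y$, the band $\lfloor\log_{2}\log_{2}(x+y)\rfloor$ agrees with that of the larger argument, whereas for two arguments of \emph{comparable} size $\log_{2}(xy)=\log_{2}x+\log_{2}y\approx 2\log_{2}y$, which pushes $\log_{2}\log_{2}(xy)$ up by roughly one full band. Hence, in the ``clustered'' case, where the reduced $H$ contains two elements lying in a common band, their sum and product fall into adjacent bands of opposite parity, contradicting monochromaticity; here one need only verify the harmless boundary effect that the sum does not itself spill into the next band.

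The main obstacle is the complementary ``spread-out'' case, in which $H$ meets each band at most once and its elements grow like an iterated tower, for example $h_{k}\approx 2^{2^{2k}}$. Now $x\ll y$ forces $\log_{2}(xy)=\log_{2}x+\log_{2}y\approx\log_{2}y$, so the product lands in the \emph{same} band as the sum; worse, any colouring that factors through the scale $\lfloor\log_{2}n\rfloor$ is defeated outright, since the adversary may take all $\log_{2}h_{i}$ to be even integers, making every sum- and product-scale of the same parity. Overcoming this demands replacing scale information by a finer, genuinely \emph{arithmetic} invariant, one that is insensitive to adding a much smaller number but is moved by multiplication (for instance data read off the binary digits, or the leading-digit mantissa, which for near-powers of two separates a product from the corresponding sum). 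Engineering such an invariant so that it survives \emph{all} spread configurations, rather than merely the power-of-two towers, is the delicate heart of the matter and the step I expect to be hardest; the finite colouring witnessing that $\mathcal F$ is not PR is then assembled by combining this arithmetic colouring with the scale colouring above.
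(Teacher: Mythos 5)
Your proposal is a programme rather than a proof: the ``spread-out'' case, which you yourself identify as the delicate heart, is precisely where the entire content of the theorem lies, and you leave it unresolved beyond a gesture towards ``binary digits or the leading-digit mantissa''. Worse, even the clustered case as written is flawed: the boundary effect is \emph{not} harmless. If $x<y$ lie in a common band $k$ (meaning $\log_2 x,\log_2 y\in[2^k,2^{k+1})$), then $\log_2(xy)\in[2^{k+1},2^{k+2})$, so the product always lands in band $k+1$; but $\log_2(x+y)\in(\log_2 y,\log_2 y+1)$, so whenever $\log_2 y\in[2^{k+1}-1,2^{k+1})$ the sum can spill into band $k+1$ as well, giving sum and product the \emph{same} parity. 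Since the colouring is fixed first and the adversary chooses $H$ afterwards, $H$ may consist entirely of clustered pairs hugging band tops, and passing to subsets of $H$ cannot move its elements away from the boundaries; repairing this already requires the finer invariant you defer. So both halves of your case split depend on the missing ingredient, and no colouring is actually produced.

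For comparison, the paper takes a route that avoids constructing a colouring altogether. By the ultrafilter/nonstandard translation (\Cref{teo:RWequiv}), partition regularity of $\mathcal F$ would produce a Ramsey pair $(\alpha,\beta)$ with $\alpha+\beta\sim\alpha\beta$, and \Cref{thm: no sums and products RW} shows no such pair exists. Its proof uses exactly the arithmetic invariants you gesture at---the base-$p$ digit data $v_p$, $\ell_p$, $\smod_p$ (positions and values of the first and last nonzero digits)---but the Ramsey-pair machinery is what makes them effective in the spread regime: \Cref{co:RWbasics}(\ref{point:RW7}) forces $v(\alpha)\le v(\beta)$ and, when valuations differ, $v(\beta)>\dcl(\alpha)$; applying $(v,\ell)$ to $\alpha\beta\sim\alpha+\beta$ and invoking \Cref{lemma: RW then pairs are not equivalent} pins $v(\alpha)$ to be finite; a $\mathbb Z_p$-class computation ($a^2=2a$, hence $a=2$) then hands off to the shifted Ramsey pair $(\alpha-2,\beta-2)$, where a $\smod$ comparison via \Cref{rem:exsecondacifra} yields the contradiction. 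In effect, saturation and the closure properties of $\mathrm{RW}$ perform, uniformly and for free, the ``survive all spread configurations'' engineering that you correctly flag as the hardest step; carrying out your plan by hand would amount to reproving Hindman's original explicit-colouring argument, which is substantially more involved than your sketch suggests.
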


This is (a failure of) the kind of ``Ramsey partition regularity'' statements (see \Cref{casino}) that we study in this paper, namely: for which formulas  $\varphi\left(x,y,z_{1},\ldots,z_{n}\right)$, given a finite colouring of $\mathbb N$, one can find an infinite set $H$ such that for all $(x,y)\in [H]^{2}$ there exist monochromatic $z_{1},\ldots,z_{n}$ such that $\varphi\left(x,y,z_{1},\ldots,z_{n}\right)$ holds?  E.g., in the theorem above, $\varphi(x,y,z,t)$ is the formula $(x\neq y)\wedge (x+y=z)\wedge (x\cdot y=t)$.

Because of the infinitary nature of these statements,  \Cref{fact:fundprop} does not directly apply. Nevertheless, one may still study such problems via nonstandard methods by employing the following definition, not to be confused with that of ``Ramsey ultrafilters'' (also known as ``selective ultrafilters'').

\begin{defin}\label{defin:rw} We call an ultrafilter $u\in \beta \mathbb N^2$  a \emph{Ramsey's witness} (RW) if for all $A\in u$ there exists an infinite $H\subseteq \N$ such that $[H]^{2}\subseteq A$. We also write $\mathrm{RW}$ for the subset of $\beta \mathbb N^2$ consisting of all $u$ as above, and say that $(\alpha,\beta)\in \star{\N}^{2}$ is a \emph{Ramsey pair}, if $\tp(\alpha,\beta)\in \mathrm{RW}$. In this case, we write $(\alpha,\beta)\models \mathrm{RW}$.
\end{defin}

  By \nameref{teo: Ramsey}, the family of all $A\subseteq \mathbb N^2$ such that there exists an infinite $H\subseteq \N$ with $[H]^{2}\subseteq A$ is SPR. Equivalently, every such $A$ is contained in some $u\in \mathrm{RW}$; equivalently, $\star A$ contains a Ramsey pair.

 \Cref{teo: nonstandard first order properties RW} is our main technical result. It provides a ``Ramsey'' version of \Cref{fact:fundprop},  that is, a characterisation of Ramsey partition regularity statements in terms of properties of Ramsey pairs. From this characterisation we deduce several results in the style of the \nameref{thm:pwsp}. Let us state some of them in the language we just introduced.

 \begin{alphthm}\label{alphthm:A}
  If $(\alpha,\beta)$ is a Ramsey pair, then
  \begin{enumerate}
  \item\label{point:A1} $\alpha+\beta\nsim \alpha\cdot\beta$ (\Cref{thm: no sums and products RW}),
  \item\label{point:A2} $\alpha\cdot\beta\nsim \alpha^\beta$ (\Cref{eg:yxxy}),
  \item\label{point:A3} $\alpha^\beta\nsim \beta^\alpha$ (\Cref{eg:expprod}), and
  \item\label{point:A4} $f(\alpha)\cdot2^\beta\nsim g(\alpha,\beta)$, where $f\from \mathbb N\to \mathbb N$ and $g\in \mathbb Z[x,y]$ (special case of \Cref{thm:hbfs}).
  \end{enumerate}
\end{alphthm}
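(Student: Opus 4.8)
The four clauses share a common shape, $\gamma\nsim\delta$ with $\gamma=E_1(\alpha,\beta)$ and $\delta=E_2(\alpha,\beta)$ two arithmetic expressions evaluated at the Ramsey pair. The plan is to prove the contrapositive uniformly, reading off the easy half of \Cref{teo: nonstandard first order properties RW} directly from the definition of a Ramsey pair. Suppose some Ramsey pair $(\alpha,\beta)$ satisfied $E_1(\alpha,\beta)\sim E_2(\alpha,\beta)$, and set $u\coloneqq\tp(E_1(\alpha,\beta))=\tp(E_2(\alpha,\beta))\in\beta\N$. I claim $u$ is contained in the family
\[
\mathcal F\coloneqq\{A\subseteq\N\mid \exists H\subseteq\N\text{ infinite},\ \forall(x,y)\in[H]^2\ (E_1(x,y)\in A\wedge E_2(x,y)\in A)\}.
\]
Indeed, for $A\in u$ both $E_1(\alpha,\beta)$ and $E_2(\alpha,\beta)$ lie in $\star A$, so by transfer $(\alpha,\beta)\in\star B$ for $B\coloneqq\{(x,y)\mid E_1(x,y)\in A\wedge E_2(x,y)\in A\}$; hence $B\in\tp(\alpha,\beta)$, and since $(\alpha,\beta)$ is a Ramsey pair there is an infinite $H$ with $[H]^2\subseteq B$, witnessing $A\in\mathcal F$. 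As $\mathcal F$ is upward closed, the existence of the ultrafilter $u\subseteq\mathcal F$ would make $\mathcal F$ partition regular by the witness characterisation. Thus each clause reduces to showing that the corresponding $\mathcal F$ is \emph{not} PR.

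For clause~\ref{point:A1}, with $E_1(x,y)=x+y$ and $E_2(x,y)=x\cdot y$, the family $\mathcal F$ is exactly the one appearing in the \nameref{thm:pwsp}, already known not to be PR; this gives the contradiction at once and establishes \Cref{thm: no sums and products RW}. So clause~\ref{point:A1} costs nothing beyond the reduction above.

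For the remaining clauses there is no off-the-shelf non-PR statement, and the work goes into constructing, for each pair of expressions, a finite colouring of $\N$ admitting no infinite homogeneous $H$. Here I would exploit the large gap in growth rates between $E_1$ and $E_2$: writing $x<y$ for a pair in $[H]^2$, one has $\log(xy)=\log x+\log y$ against $\log(x^y)=y\log x$ (clause~\ref{point:A2}), and $\log(x^y)=y\log x$ against $\log(y^x)=x\log y$ (clause~\ref{point:A3}), while in clause~\ref{point:A4} the factor $2^y$ makes $f(x)\cdot 2^y$ exponential in $y$ whereas $g(x,y)$ is only polynomial. In each case a colouring assigning to $n$ the parity of the index $k$ of the interval $[c_k,c_{k+1})$ containing it, for a sufficiently fast-growing scale sequence $(c_k)$ (iterated exponentials, so that the colour tracks an iterated logarithm of $n$), should force $E_1(x,y)$ and $E_2(x,y)$ into intervals of opposite parity for every sufficiently spread-out pair; an infinite homogeneous $H$ would then pin two sequences to be simultaneously polynomial and exponential in scale, which is impossible.

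The main obstacle is precisely this colouring construction and the verification that no infinite homogeneous set survives, carried out in \Cref{eg:yxxy}, \Cref{eg:expprod} and \Cref{thm:hbfs}. The most delicate point is clause~\ref{point:A4}: there $f\from\N\to\N$ is completely arbitrary, so the colouring cannot rely on any control of the $x$-dependent prefactor $f(x)$ and must separate $f(x)\cdot 2^y$ from $g(x,y)$ using only the $y$-scale. Fixing $x$ along $H$ and letting $y$ range reduces matters to separating a single exponential sequence $c\cdot 2^y$ from a fixed polynomial in $y$, but one must do so by a single colouring working simultaneously for all the uncontrolled values $f(x)$ arising as $x$ ranges over the infinitely many elements of $H$. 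Once these non-PR facts are in place, the uniform reduction of the first paragraph closes all four clauses.
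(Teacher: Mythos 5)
Your opening reduction is correct, and is exactly the easy direction of \Cref{teo:RWequiv}: a Ramsey pair satisfying $E_1(\alpha,\beta)\sim E_2(\alpha,\beta)$ yields an ultrafilter witnessing that the associated family $\mathcal F$ is PR. With it, clause~\ref{point:A1} does follow by citing Hindman's \nameref{thm:pwsp} as a black box; note, though, that this inverts the paper's reading of that clause, since the paper proves the nonstandard statement \emph{directly} (\Cref{thm: no sums and products RW}, via $p$-adic valuations and \Cref{lemma: RW then pairs are not equivalent}) precisely in order to give a new proof of Hindman's theorem rather than to consume it. That is a legitimate difference of direction, and the logic of clause~\ref{point:A1} is fine.

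The genuine gap is in clauses~\ref{point:A2}--\ref{point:A4}, where all the content of your plan is deferred to colouring constructions that are never carried out and whose sketched verification does not stand. A parity-of-scale colouring does not visibly ``force opposite parity for every sufficiently spread-out pair'': for a fixed $x$, the set of $y$ for which $xy$ lands in an even-indexed block and the set for which $x^y$ does are both unbounded unions of intervals, and nothing in your sketch prevents their intersection from being unbounded; moreover an adversarial $H$ may place its elements near the block boundaries and take $x$ itself of the same tower scale as the $c_k$, where the alignment $\log_x c_k\approx c_{k-1}$ breaks down. It is also far from clear that two colours suffice at all (compare the explicit \emph{five}-colouring of Ackelsberg for the related equation $x^2+y=z$ mentioned in the introduction), and the closing claim that a homogeneous $H$ would ``pin two sequences to be simultaneously polynomial and exponential in scale'' is a slogan, not an argument. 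For clause~\ref{point:A4} you correctly identify the obstruction (arbitrary $f$) but do not overcome it. The paper proceeds in the opposite direction: it proves the nonstandard inequalities directly and obtains the non-PR statements as corollaries of \Cref{teo: nonstandard first order properties RW}, never exhibiting a colouring. The mechanism is short: for a Ramsey pair one has $f(\alpha)<\beta$ for every standard $f$ (\Cref{co:RWbasics}(\ref{point:RW8})), so applying iterated logarithms $\ell=\ell_2$ one computes that $\ell^{(3)}(\alpha^\beta)$ is within a finite shift of $\ell^{(2)}(\beta)$ while $\ell^{(3)}(\alpha\beta)$ is within a finite shift of $\ell^{(3)}(\beta)$; if the two sides were $\sim$-equivalent, \Cref{lemma:minedhbfs} (resting on \Cref{fact:simprop}(\ref{point:fideq})) would force $\ell^{(2)}(\beta)=\ell^{(3)}(\beta)+n$ for a standard $n$, which fails for infinite $\beta$. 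Clauses~\ref{point:A2} and~\ref{point:A4} are identical in shape (\Cref{eg:yxxy}, \Cref{thm:hbfs}, with base-$m$ logarithms, the bound $f(\alpha)<\beta$ absorbing the uncontrolled prefactor that blocks your colouring approach). As it stands, your proposal establishes clause~\ref{point:A1} and leaves clauses~\ref{point:A2}--\ref{point:A4} unproven.
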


Point~\ref{point:A1} is a restatement of the \nameref{thm:pwsp}, of which we provide a short nonstandard proof. It is known~\cite[Theorem~2]{sahasrabudheExponentialPatternsArithmetic2018} that every finite colouring of the naturals contains a  monochromatic quadruple $a,b,a\cdot b, a^b$. Point~\ref{point:A2} says that the Ramsey version of this does not hold, not even for $a\cdot b, a^b$, i.e., there is a finite colouring of $\mathbb N$ such that no increasing sequence $(a_i)_{i\in \mathbb N}$ is such that the set $\set{a_i\cdot a_j\mid i<j}\cup \set{a_i^{a_j}\mid i<j}$ is monochromatic. See also~\cite{dinassoMonochromaticExponentialTriples2024,nassoCentralSetsInfinite2022} for partition regularity of exponential configurations. Points~\ref{point:A3} and~\ref{point:A4} correspond to similar Ramsey properties.

 We also study the Ramsey partition regularity of polynomial equations. Namely, given a nonzero polynomial $P \in \mathbb Z[\bla x1,n]$, we investigate whether the formula $P(\bla x1,n)=0$ is Ramsey PR. In the two-variable case, there are no Ramsey PR equations (\Cref{rem:no2varRPR}). Our next result concerns certain $3$-variable polynomials.

 \begin{alphthm}[\Cref{cor: poly applications,pr:ramseydiff}]\label{alphthm:B}
  Let $P,Q\in \Z[x]$ be nonzero polynomials with $P(0)=Q(0)=0$ and $a,n\in\N$. The following are equivalent.
  \begin{enumerate}
  \item There are $\alpha\sim \beta\sim \gamma$ with $(\alpha,\beta)$ a Ramsey pair such that $a\alpha^{n}+P(\beta)=Q(\gamma)$;
  \item $ax^n+P(y)-Q(z)$ is either $a(x+y-z)$ or $a(x-y+z)$.
  \end{enumerate}
\end{alphthm}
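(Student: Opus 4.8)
The plan is to prove the two implications separately, after first unwinding what condition~(2) says about the equation. Comparing coefficients in $ax^{n}+P(y)-Q(z)=a(x+y-z)$ forces $n=1$, $P(y)=ay$ and $Q(z)=az$, so that the relation $a\alpha^{n}+P(\beta)=Q(\gamma)$ becomes, after dividing through by $a$ (we may assume $a\ge 1$, the case $a=0$ being degenerate), the \emph{sum} relation $\alpha+\beta=\gamma$; likewise $ax^{n}+P(y)-Q(z)=a(x-y+z)$ forces $n=1$, $P(y)=-ay$, $Q(z)=-az$, giving the \emph{difference} relation $\alpha+\gamma=\beta$. Thus~(2) asserts precisely that the equation is, up to the nonzero factor $a$, one of these two shapes, and the task becomes to show that a Ramsey pair with $\alpha\sim\beta\sim\gamma$ realising $a\alpha^{n}+P(\beta)=Q(\gamma)$ exists if and only if the equation is of one of these forms.

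For $(2)\Rightarrow(1)$ I would invoke the main characterisation (\Cref{teo: nonstandard first order properties RW}) to replace the existence of the desired Ramsey pair by the Ramsey partition regularity of the corresponding configuration, and then verify the latter directly from \nameref{teo: Hindman}. For the sum relation, take a sequence $(x_i)$ whose set of finite sums is monochromatic and set $H=\{x_i\}$: then $H$ is monochromatic and, for $x_i<x_j$ in $H$, the sum $x_i+x_j$ lies in the same cell, which is exactly what is required. For the difference relation, apply \nameref{teo: Hindman} and pass to the sequence of partial sums $y_i=x_0+\dots+x_i$: the set $\{y_i\}$ is monochromatic and every difference $y_j-y_i=x_{i+1}+\dots+x_j$ is again a finite sum, hence of the same colour. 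In both cases the characterisation then returns a Ramsey pair $(\alpha,\beta)$ with $\alpha\sim\beta\sim\gamma$ satisfying the relation.

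The substantial direction is $(1)\Rightarrow(2)$. The key ingredient is a \emph{domination} property of Ramsey pairs: for every $f\colon\N\to\N$ one has $\alpha<\beta$ and $\beta>f(\alpha)$. Indeed, $\{(x,y)\mid x<y\le f(x)\}$ contains no $[H]^{2}$ with $H$ infinite, since fixing $x=\min H$ would bound every larger element of $H$ by $f(x)$; so by \Cref{defin:rw} its complement lies in $\tp(\alpha,\beta)$, and intersecting with $\{x<y\}\in\tp(\alpha,\beta)$ yields $\beta>f(\alpha)$. Using this, $a\alpha^{n}$ is negligible next to $P(\beta)$, so $Q(\gamma)=a\alpha^{n}+P(\beta)$ makes $Q(\gamma)$ and $P(\beta)$ asymptotic; combining this with the type identities $\alpha\sim\beta\sim\gamma$ (which in particular give $\alpha\equiv\beta\equiv\gamma\pmod m$ for every $m$, and constrain orders of magnitude through suitable colourings) should force $\deg P=\deg Q$ with equal leading coefficients and then, after peeling off lower-order terms, $\deg P=\deg Q=1$ and $P=Q=c\cdot\mathrm{id}$. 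At that point the equation reduces to $\gamma=\beta+(a/c)\alpha^{n}$, and it remains to deduce from $\gamma\sim\beta\sim\alpha$ and the Ramsey property of $(\alpha,\beta)$ that $(a/c,n)=(\pm1,1)$, i.e.\ $\gamma=\beta\pm\alpha$, which are exactly the two admissible forms.

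I expect the main obstacle to be precisely this chain of reductions in $(1)\Rightarrow(2)$: ruling out mismatched degrees, higher degree, and non-unit coefficients, equivalently showing that every configuration not aligned with the two forms fails Ramsey partition regularity. The difficulty is that the witness $\gamma$ differs from $\beta$ only by the lower-order quantity $(a/c)\alpha^{n}$, so no crude magnitude colouring can separate $\gamma$ from $\beta$, and one cannot appeal to a naive ``same order of magnitude'' rigidity for $\sim$, since the common type of a Ramsey pair is realised at the wildly different scales $\alpha$ and $\beta$. One is therefore pushed into the more delicate iterated-logarithm and magnitude-band colourings already underlying \Cref{alphthm:A}, tuned to detect that a polynomial distortion of the order of magnitude of the witness is incompatible with monochromaticity across all these scales. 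This is the technical heart of the argument and is where I would concentrate the effort.
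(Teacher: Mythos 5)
Your direction $(2)\Rightarrow(1)$ is correct, and it takes a genuinely different route from the paper's: you deduce the two Ramsey PR statements combinatorially from \nameref{teo: Hindman} (the set $\set{x_i}$ together with its pairwise sums for $x+y=z$; the partial sums $y_i$ with differences $y_j-y_i\in \mathrm{FS}$ for $-x+y=z$) and then convert back to a Ramsey pair via \Cref{teo: nonstandard first order properties RW}, whereas \Cref{pr:ramseydiff} constructs generators directly --- an additively idempotent ultrafilter with a tensor extension for the sum, and the differences $\eta_2-\eta_1,\eta_3-\eta_1,\eta_3-\eta_2$ of a tensor triple for the difference, using \Cref{co:intfunrw}. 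Your route is shorter if one takes Hindman as a black box; the paper's makes visible the noteworthy fact (\Cref{rem:noETdifference}) that for $-x+y=z$ no \emph{tensor} pair can witness the regularity. Your direct derivation of the domination property $\beta>f(\alpha)$ straight from \Cref{defin:rw} is also valid, and is if anything more elementary than the paper's derivation through $\mathrm{RW}=\overline{\mathrm{TS}}$.

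The direction $(1)\Rightarrow(2)$, however --- which you yourself flag as ``the technical heart'' --- is not proven but only sketched, and the sketch points in the wrong direction. The missing steps are precisely the content of \Cref{teo: poly bounds RW,cor: poly applications}. First, $\deg P=\deg Q$ does not follow from asymptoticity alone: the paper extracts it from $P(\beta)\asymp Q(\gamma)$ by comparing base-$p$ lengths and invoking Rado's theorem (\Cref{tecnico lemma 2}); no mechanism for this appears in your plan. Second, for $d=\deg P>1$ you anticipate needing ``iterated-logarithm and magnitude-band colourings'' to separate $\gamma$ from $\beta$ --- but, as you yourself observe, $\gamma$ and $\beta$ differ only at lower order, so no such colouring can succeed, and the paper's mechanism is the exact opposite of separation: the sandwich of \Cref{tecnico lemma} pins $\gamma=\floor{\sqrt[d]{c}\beta}+i$ for some finite $i$, and monad rigidity (\Cref{fact:simprop}(\ref{point:fideq}): $f(\beta)\sim\beta$ implies $f(\beta)=\beta$) then forces $\gamma=\beta$ \emph{exactly}; the equation becomes $a\alpha^n=(Q-P)(\beta)$, which contradicts the finite-to-one domination of \Cref{co:RWbasics}(\ref{point:RWfintoone}) unless $Q-P$ is constant, hence (as $P(0)=Q(0)=0$) zero, making the left-hand side vanish. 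No colouring is involved; the work is done by rigidity of monads under standard functions, an idea absent from your proposal. Third, in the remaining linear case $a\alpha^n+c\beta=c\gamma$, your congruence observation ($\alpha\equiv\beta\equiv\gamma\pmod m$ for all $m$) is insufficient: one must run the base-$p$ digit analysis of \Cref{cor: poly applications}, splitting on $v_p(\alpha)=v_p(\beta)$ versus $v_p(\alpha)\ll v_p(\beta)$ (a dichotomy supplied by \Cref{co:RWbasics}) and comparing leading digits $\smod_p$. Note that the two branches yield the two \emph{different} conclusions $a=-c$ and $a=c$, so any correct argument must genuinely distinguish them --- something a single modular constraint cannot do.
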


In other words, up to multiplying everything by a constant, the only two equations of the form $ax^n+P(y)=Q(z)$ that are Ramsey PR in $x,y$ are Schur's equation $x+y=z$ and the equation $x-y=-z$. An interesting property of the latter is that, contrarily to Schur's equation, it cannot be solved by tensor pairs (\Cref{rem:noETdifference}). \Cref{alphthm:B} rules out the Ramsey partition regularity of plenty of equations, ranging from some whose partition regularity is open, such as the Pythagorean equation $x^2+y^2=z^2$, to  equations of the form $y-x=z^n$, for $n\ge 2$, that are known to be PR, see \cite[Theorem C]{BFMC96}.

Similar Ramsey partition regularity problems were posed in~\cite{kraProblemsInfiniteSumset2025}, which contains questions arising from the solution~\cite{moreiraProofSumsetConjecture2019} to Erd\H os' $B+C$ conjecture, arguably the most important recent result in combinatorics. In particular, \Cref{alphthm:B} answers in the negative several questions from Sections~3.3 and~3.4 of~\cite{kraProblemsInfiniteSumset2025}, in the stronger PR version, albeit in the form without shifts.

For instance, in the language of the present work, the shiftless PR version of \cite[Question~3.15]{kraProblemsInfiniteSumset2025} amounts to asking whether the equation $x^2+y=z$ is Ramsey PR. In~\cite[Theorem~1.5]{ackelsbergInfinitePolynomialPatterns2025} it is shown that this equation is Ramsey PR over $\mathbb Q$, and in fact that the same holds with $x^2$ replaced by any $P(x)\in \mathbb Q[x]$ of degree at least $2$. 
This fails over $\mathbb N$ by \Cref{alphthm:B} and, after a preprint version of this paper was circulated, E.~Ackelsberg gave an explicit $5$-colouring witnessing that $x^2+y=z$ is not Ramsey PR,  see~\cite[Section~11.1]{ackelsbergInfinitePolynomialPatterns2025}.
Analogously, \Cref{alphthm:B} answers in the negative the PR shiftless versions of~\cite[Questions 3.10, 3.11, 3.12, 3.15, 3.19, 3.20]{kraProblemsInfiniteSumset2025}, as well as the variant of~\cite[Question~3.16]{kraProblemsInfiniteSumset2025} with monomials on the left hand side but arbitrary polynomials without constant term on the right hand one.

\Cref{alphthm:A,alphthm:B} demonstrate that being Ramsey PR significantly strengthens being merely PR. One may wonder what happens to other properties classically known to be PR. In this direction, another application of our results is a proof that van der Waerden's Theorem does not admit a ``Ramsey version'', not even in the case of $3$-term arithmetic progressions.

\begin{alphthm}[\Cref{no infinite 3-AP}]\label{alphthm:C}
  If $\alpha\sim\beta\sim\gamma$ and $(\alpha,\beta)$ is a Ramsey pair, then $\alpha,\beta,\gamma$ do not form, in any order, an arithmetic progression.
\end{alphthm}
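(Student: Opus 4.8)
The plan is to deduce the statement from three applications of \Cref{alphthm:B}. First I would note that three hypernaturals $\alpha,\beta,\gamma$ form an arithmetic progression in some order exactly when one of
\[2\alpha=\beta+\gamma,\qquad 2\beta=\alpha+\gamma,\qquad 2\gamma=\alpha+\beta\]
holds: the three equations correspond to the choice of middle term, and since a progression and its reversal impose the same condition, these are the only cases to consider. So it is enough to show that, for a triple $\alpha\sim\beta\sim\gamma$ with $(\alpha,\beta)$ a Ramsey pair, none of the three equations can hold.

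Each equation fits the template $a\alpha^{n}+P(\beta)=Q(\gamma)$ of \Cref{alphthm:B} once we allow negative coefficients in $P,Q\in\Z[x]$. Indeed, taking $n=1$ throughout, the first equation reads $2\alpha+P(\beta)=Q(\gamma)$ with $a=2$, $P(y)=-y$, $Q(z)=z$; the second reads $\alpha+P(\beta)=Q(\gamma)$ with $a=1$, $P(y)=-2y$, $Q(z)=-z$; and the third reads $\alpha+P(\beta)=Q(\gamma)$ with $a=1$, $P(y)=y$, $Q(z)=2z$. In all three cases $P$ and $Q$ are nonzero and satisfy $P(0)=Q(0)=0$, while $a,n\in\N$, so the hypotheses of \Cref{alphthm:B} are met.

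If any of the three equations held for our triple, then condition~(1) of \Cref{alphthm:B} would be witnessed, forcing condition~(2): the polynomial $ax^{n}+P(y)-Q(z)$ would have to equal $a(x+y-z)$ or $a(x-y+z)$. But a direct comparison of coefficients shows that the three polynomials arising here, namely $2x-y-z$, $x-2y+z$, and $x+y-2z$, are none of these for the relevant value of $a$, a contradiction. Hence $\alpha,\beta,\gamma$ cannot be arranged into an arithmetic progression.

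I do not expect a genuine obstacle: the entire content is carried by \Cref{alphthm:B}, and Theorem~C is essentially a corollary. The only points requiring care are the bookkeeping that reduces \emph{every} ordering of the progression to exactly the three equations above, and the sign handling needed to present each equation in the precise form $a\alpha^{n}+P(\beta)=Q(\gamma)$ with $P,Q\in\Z[x]$; once this is set up, the failure of condition~(2) is immediate from the list of admissible polynomials.
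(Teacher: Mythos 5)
Your proposal is correct and follows essentially the same route as the paper: the proof of \Cref{no infinite 3-AP} likewise observes that an arithmetic progression in some order amounts to one of the three equations $\alpha+\beta=2\gamma$, $\alpha+\gamma=2\beta$, $\beta+\gamma=2\alpha$, and rules each out by \Cref{cor: poly applications} (the substance of \Cref{alphthm:B}) with exactly your choices of $a$, $P$, $Q$. There is nothing to fix.
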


The paper is structured as follows. In  \Cref{Section: RW} we will prove several properties of Ramsey pairs and characterise them in terms of tensor pairs. In  \Cref{Section: main defn}, we introduce several versions of Ramsey partition regularity and characterise them in ultrafilter and nonstandard terms. In  \Cref{sums and products} we study these notions for the formula $\varphi(x,y,z,t)$ that appears in the \nameref{thm:pwsp}, and give a very compact proof of the latter. In  \Cref{pr poly} we prove \Cref{alphthm:A,,alphthm:B,,alphthm:C}, and in our final  \Cref{Section: open problems} we collect some open problems that we believe deserve further study.

\paragraph{Acknowledgements} We thank the anonymous referees for their careful reading of this manuscript and for many useful suggestions. We thank Ethan Ackelsberg for useful conversations around \Cref{alphthm:B}.

\section{Ramsey's witnesses}\label{Section: RW}

In this paper, $\mathbb N$ denotes the set of \emph{positive} integers. We assume familiarity with the basics of nonstandard methods, for which we refer the reader to, e.g., \cite{goldblattLecturesHyperreals1998}, but we briefly recall some well-known facts on tensor pairs.

The relationship between ultrafilters on $\N$ and on $\N^{2}$ is intricate, as the product filter $u\times v$ of two ultrafilters on $\N$, generated by the family $\set{A\times B\mid A\in u, B\in v}$, is not in general an ultrafilter on $\N^2$. Among all the ultrafilters that extend $u\times v$, one plays a special role in many applications, namely the tensor product $u\otimes v$.

\begin{defin} Given $u,v\in\beta\N$, the \emph{tensor product} $u\otimes v$ is the ultrafilter on $\N^{2}$ defined as follows: for all $A\subseteq \N^{2}$, we let
\[A\in u\otimes v\Leftrightarrow \{n\in\N\mid \{m\in\N\mid (n,m)\in A\}\in v\}\in u.\]
Given $(\alpha,\beta)\in\star\N^{2}$, we say that $(\alpha,\beta)$ is a \emph{tensor pair} if $\tp(\alpha,\beta)$ is a tensor product. 
\end{defin}

The tensor product can also be considered in arbitrary dimensions and is associative, hence it makes sense to talk of tensor triples, tensor $k$-uples, etc. However, in this paper we will mostly work with tensor products in dimension 2. Note that, almost by definition, if $(\alpha,\beta)$ is a tensor pair then $\tp(\alpha,\beta)=\tp(\alpha)\otimes\tp(\beta)$. For a detailed study of tensor products we refer to~\cite{luperibagliniNonstandardCharacterisationsTensor2019}. In more model-theoretic language, $(\alpha,\beta)$ is a tensor pair if the type of $\alpha$ over $\mathbb N\cup \set\beta$ is a \emph{coheir} of its restriction to $\mathbb N$, that is, is finitely satisfiable in $\mathbb N$, which amounts to the following.
\begin{fact}\label{fact:coheir}
The pair $(\alpha,\beta)$ is tensor if and only if for every $A\in \tp(\alpha,\beta)$ there is $n\in \mathbb N$ such that $A\in \tp(n,\beta)$.
\end{fact}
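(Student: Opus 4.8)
The plan is to translate both sides of the claimed equivalence into statements about the vertical sections of a set $A\subseteq\N^{2}$ and then exploit the maximality of ultrafilters. Write $u\coloneqq\tp(\alpha)$ and $v\coloneqq\tp(\beta)$, and for $A\subseteq\N^{2}$ and $n\in\N$ let $A_{n}\coloneqq\set{m\in\N\mid (n,m)\in A}$ be its $n$-th section. The first step is to record two reformulations. Unwinding the definition of $\otimes$, we have $A\in u\otimes v$ if and only if $\set{n\in\N\mid A_{n}\in v}\in u$. On the other hand, since $n$ is standard, transfer of the statement defining $A_{n}$ gives $\set{\eta\in\star\N\mid(n,\eta)\in\star A}=\star(A_{n})$, so that $A\in\tp(n,\beta)$, i.e.\ $(n,\beta)\in\star A$, holds precisely when $A_{n}\in v$. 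With these in hand, the right-hand condition of the Fact becomes: for every $A\in\tp(\alpha,\beta)$ there is $n\in\N$ with $A_{n}\in v$.

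For the forward direction, suppose $(\alpha,\beta)$ is tensor, so $\tp(\alpha,\beta)=u\otimes v$. Given $A\in\tp(\alpha,\beta)$, the first reformulation yields $\set{n\mid A_{n}\in v}\in u$; since $\emptyset\notin u$ this set is nonempty, and any witnessing $n$ satisfies $A_{n}\in v$, i.e.\ $A\in\tp(n,\beta)$. This direction is routine.

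The backward direction is where the real work lies. Assuming the section condition, I want to conclude $\tp(\alpha,\beta)=u\otimes v$; since both are ultrafilters, it suffices to prove the single inclusion $\tp(\alpha,\beta)\subseteq u\otimes v$. So fix $A\in\tp(\alpha,\beta)$ and argue by contradiction: if $A\notin u\otimes v$, then $\N^{2}\setminus A\in u\otimes v$, which by the first reformulation means that $D\coloneqq\set{n\mid (\N\setminus A_{n})\in v}\in u$, equivalently $\alpha\in\star D$. The key idea is then to intersect $A$ with the ``cylinder'' $D\times\N$: since $\alpha\in\star D$ gives $D\times\N\in\tp(\alpha,\beta)$, the set $A'\coloneqq A\cap(D\times\N)$ still lies in $\tp(\alpha,\beta)$. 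Computing its sections, $A'_{n}$ is empty when $n\notin D$, while for $n\in D$ we have $A'_{n}=A_{n}$, and $n\in D$ says precisely that $\N\setminus A_{n}\in v$, whence $A_{n}\notin v$ as $v$ is an ultrafilter. Thus no section of $A'$ lies in $v$, contradicting the section condition applied to $A'$.

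The main obstacle is exactly this construction in the backward direction: one must manufacture, from a putative failure $A\notin u\otimes v$, a single set in $\tp(\alpha,\beta)$ all of whose sections avoid $v$, and the cylinder trick $A'=A\cap(D\times\N)$ is what achieves it. The remaining ingredients, namely the transfer computation for sections at standard points and the bookkeeping with complements of sections, are routine.
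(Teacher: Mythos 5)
Your proof is correct: both reformulations (that $A\in u\otimes v$ iff $\set{n\mid A_n\in v}\in u$, and that for standard $n$ one has $A\in\tp(n,\beta)$ iff $A_n\in\tp(\beta)$, by transfer) are sound, the forward direction is routine, and the cylinder trick $A'=A\cap(D\times\N)$ together with maximality of ultrafilters correctly closes the converse. The paper states \Cref{fact:coheir} without proof, as the standard characterisation of finite satisfiability of coheirs, and your argument is precisely the standard one that the authors leave implicit, so there is nothing to compare beyond noting that it fills the omitted details correctly.
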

This point of view is exploited for example in \cite{collaRAMSEYSCOHEIRS2022}.

Combinatorial applications of tensor pairs by means of nonstandard methods are based on their characterisation given by C.~Puritz in \cite[Theorem~3.4]{puritzSkiesConstellationsMonads1972} (for higher dimensions, see \cite[Theorem~4.22]{luperibagliniNonstandardCharacterisationsTensor2019}\footnote{In the cited result, there is a missing assumption that $\bla \alpha1,n$ are not standard.}). Before recalling Puritz' Theorem, let us review some prerequisites. See e.g.~\cite{dinassoTasteNonstandardMethods2014,dinassoHypernaturalNumbersUltrafilters2015,dinassoNonstandardMethodsRamsey2019,luperibagliniNonstandardCharacterisationsTensor2019} for further details.

\begin{notation}
  Throughout the paper we treat functions $f\from \mathbb N^k\to \mathbb N$ as symbols in the language of the full structure on $\mathbb N$; in practice this means that, if $\alpha\in \star \mathbb N^k$, its image under the nonstandard extension of $f$ to $\star \mathbb N^k$ will be denoted by $f(\alpha)$ instead of $\star f(\alpha)$. Similarly, we use the symbol $f$ also for the induced map $\beta \mathbb N^k\to \beta \mathbb N$ (which is well-defined by \Cref{fact:simprop} below). We write $\dcl(\alpha)$ for the definable closure of $\alpha$, namely
  \[
    \dcl(\alpha)\coloneqq\set{f(\alpha)\mid f\from \mathbb N \to \mathbb N}.
  \]
\end{notation}

\begin{fact}\label{fact:simprop}Let $\alpha\in \star \mathbb N^n$, $\beta\in \star \mathbb N^m$, $f\from \mathbb N^n\to \mathbb N^l$, and $g\from\N^{m}\rightarrow\N^{l}$.
  \begin{enumerate}
  \item\label{point:simpushf}   If $\alpha\sim \beta$ then  $f(\alpha)\sim f(\beta)$.
\item\label{point:fideq}        If   $f(\alpha)\sim \alpha$ then $f(\alpha)=\alpha$.
    \item The following are equivalent.
\begin{enumerate}
\item $f(\alpha)\sim g(\beta)$.
\item There exists $\beta'\sim \beta$ such that $f(\alpha)= g(\beta')$.
\end{enumerate}
  \item If $\alpha,\beta\in \star \mathbb N$,  $\alpha\sim \beta$, and $\alpha<\beta$, then $\beta-\alpha$ is infinite.
  \end{enumerate}
\end{fact}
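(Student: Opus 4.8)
I would prove the four items in order, the second and third carrying the real content and the other two reducing to them. The first item is immediate from transfer (note that here $m=n$, so that $\alpha\sim\beta$ makes sense and $f(\beta)$ is defined): for every $A\subseteq\N^l$ one has $f(\alpha)\in\star A$ if and only if $\alpha\in\star(f\inverse A)$, and since $f\inverse A\subseteq\N^n$ and $\alpha\sim\beta$, this last condition is equivalent to $\beta\in\star(f\inverse A)$, i.e.\ to $f(\beta)\in\star A$. Hence $\tp(f(\alpha))=\tp(f(\beta))$.

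For the second item I would argue by contradiction; here $f\from\N^n\to\N^n$ since $f(\alpha)\sim\alpha$ requires $f(\alpha)\in\star\N^n$. Set $D\coloneqq\set{x\in\N^n\mid f(x)=x}$, so that by transfer $f(\alpha)=\alpha$ is exactly $\alpha\in\star D$, and suppose instead $\alpha\in\star A$ for $A\coloneqq\N^n\setminus D$, the set on which $f$ is fixed-point free. The key input is the classical colouring lemma (see e.g.\ \cite{hindmanAlgebraStoneCechCompactification2011}) that a fixed-point-free self-map admits a partition $A=A_1\cup A_2\cup A_3$ with $f(A_i)\cap A_i=\emptyset$ for each $i$; this holds because the graph on $A$ with edges $\set{x,f(x)}$ (for those $x\in A$ with $f(x)\in A$) is a pseudoforest, as each vertex has out-degree at most one, and pseudoforests are $3$-colourable. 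Since $\alpha\in\star A$ and $\tp(\alpha)$ is an ultrafilter, some $A_i\in\tp(\alpha)$, i.e.\ $\alpha\in\star A_i$. Then $f(\alpha)\in\star(f(A_i))$ by transfer, while the hypothesis $\tp(\alpha)=\tp(f(\alpha))$ gives $A_i\in\tp(f(\alpha))$, i.e.\ $f(\alpha)\in\star A_i$; but $f(A_i)\cap A_i=\emptyset$ transfers to $\star(f(A_i))\cap\star A_i=\emptyset$, a contradiction.

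For the third item, the implication from the second condition to the first is easy: if $f(\alpha)=g(\beta')$ with $\beta'\sim\beta$, then the first item yields $g(\beta')\sim g(\beta)$, whence $f(\alpha)\sim g(\beta)$. For the converse I would realise, using saturation, the partial type
\[ p(x)\coloneqq\set{x\in\star B\mid B\in\tp(\beta)}\cup\set{g(x)=f(\alpha)}. \]
A realisation $\beta'$ satisfies $\tp(\beta')\supseteq\tp(\beta)$, hence $\beta'\sim\beta$ since complete types are maximal, together with $g(\beta')=f(\alpha)$, as wanted. Finite satisfiability is the crux: given $B_1,\dots,B_k\in\tp(\beta)$, put $B\coloneqq B_1\cap\dots\cap B_k\in\tp(\beta)$; then $\beta\in\star B$ gives $g(\beta)\in\star(g(B))$, so $g(B)\in\tp(g(\beta))=\tp(f(\alpha))$, i.e.\ $f(\alpha)\in\star(g(B))$, and transferring $\forall y\in g(B)\,\exists x\in B\ g(x)=y$ produces some $x\in\star B$ with $g(x)=f(\alpha)$. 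As $p(x)$ comprises at most $\mathfrak c$ conditions over the single parameter $f(\alpha)$, the standing $\mathfrak c^+$-saturation of $\star\N$ supplies the required $\beta'$.

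Finally, the fourth item I would deduce from the second applied to the shift $f\from\N\to\N$, $f(n)\coloneqq n+k$: were $\beta-\alpha$ finite it would equal some integer $k\ge 1$, and then $f(\alpha)=\alpha+k=\beta\sim\alpha$ would force $f(\alpha)=\alpha$, i.e.\ $k=0$, a contradiction. The main obstacle is concentrated in the combinatorial colouring lemma powering the second item and in the finite-satisfiability computation of the third; everything else is transfer and reduction. I expect the only genuine subtlety to be tracking the saturation cardinality in the third item, which is exactly what the $\mathfrak c^+$-saturation hypothesis is there to handle.
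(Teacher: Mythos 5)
Your proof is correct. Note, however, that the paper does not prove this statement at all: it is presented as a well-known \emph{Fact}, with the reader deferred to the cited surveys, so there is no proof of the paper's own to compare against. Your arguments are precisely the standard ones from that literature --- transfer for item~1, the three-set fixed-point lemma (found e.g.\ in Hindman--Strauss) for item~2, including the correct handling of points of $A$ mapped into the fixed-point set $D$, finite satisfiability plus the standing $\mathfrak c^+$-saturation for item~3 (where you rightly observe that countable saturation would not suffice, as $\tp(\beta)$ has $\mathfrak c$ many members), and the reduction of item~4 to item~2 via the shift $n\mapsto n+k$ --- so the proposal can be regarded as a faithful reconstruction of the intended proofs.
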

\begin{fact}[Puritz]\label{puritz} Let $(\alpha,\beta)\in\star{\N^{2}}$. Then $(\alpha,\beta)$ is a tensor pair if and only if for all $f\from\N\rightarrow \N$ we have $f(\beta)\in\N$ or $\alpha<f(\beta)$.\end{fact}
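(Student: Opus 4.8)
The plan is to route everything through the coheir reformulation of tensorness in \Cref{fact:coheir}: the pair $(\alpha,\beta)$ is tensor precisely when every $A\subseteq\N^2$ with $(\alpha,\beta)\in\star A$ already satisfies $(n,\beta)\in\star A$ for some standard $n\in\N$. I would then prove the two implications of Puritz' criterion separately, in each case manufacturing a single well-chosen set or function that, via transfer, converts the hypothesis into the conclusion.

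For the forward direction, suppose $(\alpha,\beta)$ is tensor and fix $f\from\N\to\N$ with $f(\beta)\notin\N$; I must show $\alpha<f(\beta)$. Arguing by contradiction, assume $\alpha\ge f(\beta)$ and consider the threshold set $A=\set{(n,m)\in\N^2\mid n\ge f(m)}$. Transfer gives $\star A=\set{(\xi,\eta)\in\star\N^2\mid \xi\ge f(\eta)}$, so $\alpha\ge f(\beta)$ yields $(\alpha,\beta)\in\star A$. The coheir property then supplies a standard $n$ with $(n,\beta)\in\star A$, i.e.\ $n\ge f(\beta)$; but $f(\beta)$ is infinite and hence exceeds every standard natural, a contradiction.

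For the converse, assume the Puritz condition and suppose towards a contradiction that coheir fails, so that there is $A$ with $(\alpha,\beta)\in\star A$ yet $(n,\beta)\notin\star A$ for all $n\in\N$. The key move is to define $f\from\N\to\N$ by letting $f(m)$ be the least $n$ with $(n,m)\in A$ (and, say, $f(m)=1$ if no such $n$ exists). Since $(\alpha,\beta)\in\star A$, transfer shows that the internal set $\set{n\in\star\N\mid (n,\beta)\in\star A}$ is nonempty and contains $\alpha$, so its least element $f(\beta)$ satisfies $f(\beta)\le\alpha$ together with $(f(\beta),\beta)\in\star A$. Were $f(\beta)$ standard, this last membership would contradict the failure of coheir; hence $f(\beta)\notin\N$. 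Thus $f$ witnesses $f(\beta)\notin\N$ and $\alpha\ge f(\beta)$, negating the assumed Puritz condition and giving the desired contradiction.

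The conceptual crux in both directions is choosing the right test object---the threshold set $\set{(n,m)\mid n\ge f(m)}$ one way and the fibrewise-minimum function the other---so that the least-element principle for internal sets, combined with the fact that a nonstandard hypernatural dominates all of $\N$, turns the coheir condition into the order-theoretic statement of Puritz' criterion. The remaining work is routine transfer bookkeeping; I do not expect a genuine obstacle beyond getting these two encodings right, and in particular the argument requires no case distinction on whether $\alpha$ or $\beta$ happens to be standard.
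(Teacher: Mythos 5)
Your proof is correct, but there is nothing in the paper to compare it against: the paper does not prove \Cref{puritz} at all, importing it as a known result with a citation to \cite[Theorem~3.4]{puritzSkiesConstellationsMonads1972}. Your route---reducing Puritz' criterion to the finite-satisfiability (coheir) characterisation of \Cref{fact:coheir}---is the natural one within the paper's framework, and both of your encodings are sound: the threshold set $\{(n,m)\mid n\ge f(m)\}$ correctly converts a failure of $\alpha<f(\beta)$ into a violation of finite satisfiability, and the fibrewise-minimum function, via transfer and the least-element principle for internal sets, correctly extracts from a failure of the coheir property a function $f$ with $f(\beta)\notin\N$ and $f(\beta)\le\alpha$. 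Your closing remark also checks out: if $\beta\in\N$ the Puritz condition holds vacuously and the pair is automatically tensor (with principal second factor), and both implications degenerate gracefully, so no standardness case split is needed.

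Two minor caveats, neither a gap. First, your argument is self-contained only modulo \Cref{fact:coheir}, which the paper likewise states without proof; if you want full independence from Puritz-adjacent facts, note that the nontrivial direction of \Cref{fact:coheir} is an easy exercise from the definition of $u\otimes v$: given $A\in\tp(\alpha,\beta)$ with $B=\{n\mid A_n\in\tp(\beta)\}\notin\tp(\alpha)$, apply finite satisfiability to $A\cap\bigl((\N\setminus B)\times\N\bigr)$ to reach a contradiction. Second, in the converse direction, $\alpha$ lies in the fibre $\{\nu\in\star\N\mid(\nu,\beta)\in\star A\}$ by hypothesis alone; transfer is needed only to identify $f(\beta)$ as the least element of that internal set (using that the clause ``if the fibre at $m$ is nonempty then $f(m)$ is its minimum'' transfers). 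Your phrasing slightly conflates these two steps, but the substance is right.
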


The following remarks are well-known, and can be easily proven by using \Cref{puritz}.

\begin{rem}\label{rem:largerthandcl}
  If $(\alpha,\beta)$ is a tensor pair and $\beta\notin \mathbb N$, then $\beta>\alpha$.
\end{rem}

\begin{rem}\label{rem:tensorpshfwd}
  If $(\alpha,\beta)$ is a tensor pair, and $f,g\from \mathbb N\to \mathbb N$, then $(f(\alpha),g(\beta))$ is a tensor pair.
\end{rem}

In this paper, we focus on the relationship between tensor products and Ramsey's witnesses, which is grounded on the following fact. Its proof is implicit in the usual ultrafilter proof of \nameref{teo: Ramsey}, see e.g.~\cite[Theorem~3.3.7]{changModelTheory1990}; below, we make it explicit for the reader's convenience.

\begin{pr}\label{thm:tensors witness Ramsey Theorem}  Let $u\in \beta\N$ be a nonprincipal ultrafilter. Then $u\otimes u\in \mathrm{RW}$.
\end{pr}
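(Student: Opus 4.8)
The goal is to show that if $u\in\beta\N$ is nonprincipal then $u\otimes u\in\mathrm{RW}$, i.e.\ that for every $A\in u\otimes u$ there is an infinite $H\subseteq\N$ with $[H]^2\subseteq A$. The plan is to mimic the standard ultrafilter proof of \nameref{teo: Ramsey} and build $H=\{h_0<h_1<\cdots\}$ recursively, using the defining property of the tensor product to keep ``most'' future choices compatible with $A$.

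\begin{proof}[Proof sketch]
Fix $A\in u\otimes u$. By the definition of the tensor product, the set
\[
  B\coloneqq\set*{n\in\N\mid A_n\in u},\qquad\text{where } A_n\coloneqq\set*{m\in\N\mid (n,m)\in A},
\]
belongs to $u$. First I would observe that, since $u$ is nonprincipal, $B$ is infinite and each $A_n\in u$ is infinite. The recursion maintains an infinite ``reservoir'' set in $u$ from which the next point is drawn. Concretely, set $S_{-1}\coloneqq\N\in u$; at stage $k$, assuming I have chosen $h_0<\cdots<h_{k-1}$ and an infinite $S_{k-1}\in u$ with the property that $(h_i,m)\in A$ for every $i<k$ and every $m\in S_{k-1}$ with $m>h_{k-1}$, I pick $h_k\in S_{k-1}\cap B$ with $h_k>h_{k-1}$ (possible as $S_{k-1}\cap B\in u$ is infinite), and then set
\[
  S_k\coloneqq S_{k-1}\cap A_{h_k}.
\]
Since both $S_{k-1}$ and $A_{h_k}$ lie in $u$, so does $S_k$, and it is infinite.

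The key step is the verification that the invariant is preserved: for the new index $k$ we need $(h_k,m)\in A$ for all $m\in S_k$ with $m>h_k$, which holds because $S_k\subseteq A_{h_k}$ means exactly $(h_k,m)\in A$ for all $m\in S_k$; and for the old indices $i<k$ the condition is inherited because $S_k\subseteq S_{k-1}$. Running the recursion produces an infinite increasing sequence $(h_k)_{k\in\N}$; let $H\coloneqq\set{h_k\mid k\in\N}$. Given any pair $(h_i,h_j)\in[H]^2$ with $i<j$, we have $h_j\in S_{j-1}\subseteq S_i$ and $h_j>h_i$, so by the invariant at stage $i+1$ we get $(h_i,h_j)\in A$. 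Hence $[H]^2\subseteq A$, as required.

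The only genuinely delicate point is bookkeeping the invariant correctly so that each chosen $h_k$ sees \emph{all} larger elements of the final $H$ inside $A$; this is handled by the nesting $S_0\supseteq S_1\supseteq\cdots$ together with the fact that $h_{k}$ was drawn from $B$, guaranteeing $A_{h_k}\in u$ so that the reservoir never collapses. I do not expect any real obstacle here: the whole argument is the classical Ramsey induction, with the ultrafilter $u$ simultaneously supplying ``largeness'' (each $S_k\in u$ is infinite, so the construction never gets stuck) and ``coherence'' (membership in $u\otimes u$ translates into $B\in u$, the engine of the recursion).
\end{proof}
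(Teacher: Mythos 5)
Your proof is correct and is essentially the paper's own argument: the paper constructs the same increasing sequence by choosing $x_{k+1}$ from $A_u \cap \bigcap_{i\le k} A_{x_i}$, which is precisely your $B \cap S_k$, so your nested reservoir is just explicit bookkeeping of the same intersections. The verification that this set lies in $u$ (hence is infinite, since $u$ is nonprincipal) and the final check that $[H]^2\subseteq A$ match the paper step for step.
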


\begin{proof}
  For $A\subseteq \mathbb N^2$ and $n\in \mathbb N$, write $A_n\coloneqq\set{m \mid (n,m)\in A}$, and let $A_u\coloneqq\set{n\mid A_n\in u}$. By definition, $A\in u\otimes u\iff A_u\in u$. Fix  $A\in u\otimes u$. We construct inductively an increasing sequence $(x_i)_{i\in \mathbb N}$ such that, for each $i$, we have $A_{x_i}\in u$, and if $i<j$ then $(x_i, x_j)\in A$. To construct $x_{k+1}$, given $\bla x1<k$, we observe that the set $A_u\cap\bigcap_{i\le k} A_{x_i}$ belongs to $u$, hence is infinite, so it contains some $x_{k+1}>x_k$.
\end{proof}

Therefore, all products $u\otimes u$ with $u$ nonprincipal are Ramsey's witnesses. Actually, we can be more precise: the set $\mathrm{RW}$ is the closure of the set of such products. Recall that the topology on $\beta \mathbb N^k$ has as basic clopen sets those of the form $\set{u\in \beta \mathbb N^k\mid A\in u}$, for $A\subseteq \mathbb N^k$.

\begin{defin} 
We define the set of \emph{tensor squares} as $\mathrm{TS}\coloneqq \{u\otimes u\in\beta\N^{2}\mid u\in \beta\N\setminus \mathbb N\}$. We adopt similar conventions as in \Cref{defin:rw}, e.g.\ write $(\alpha,\beta)\models\mathrm{TS}$ when $\tp(\alpha,\beta)\in \mathrm{TS}$.
\end{defin}

\begin{thm}\label{thm: RW and tensor}  The topological closure  $\overline{\mathrm{TS}}$ coincides with $\mathrm{RW}$. \end{thm}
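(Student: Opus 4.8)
The goal is to show $\overline{\mathrm{TS}} = \mathrm{RW}$, which splits naturally into two inclusions.

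For the inclusion $\overline{\mathrm{TS}} \subseteq \mathrm{RW}$, the plan is to first observe that $\mathrm{TS} \subseteq \mathrm{RW}$ by \Cref{thm:tensors witness Ramsey Theorem}, since every tensor square $u \otimes u$ with $u$ nonprincipal is a Ramsey's witness. Then I would argue that $\mathrm{RW}$ is topologically closed, so that the closure of $\mathrm{TS}$ remains inside $\mathrm{RW}$. To see that $\mathrm{RW}$ is closed, I would show its complement is open: if $u \notin \mathrm{RW}$, then by \Cref{defin:rw} there is some $A \in u$ such that no infinite $H \subseteq \N$ satisfies $[H]^2 \subseteq A$. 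The basic clopen neighbourhood $\set{w \in \beta\N^2 \mid A \in w}$ of $u$ then consists entirely of ultrafilters failing the Ramsey's witness property (each contains the same ``bad'' $A$), so it is disjoint from $\mathrm{RW}$. Hence $\mathrm{RW}$ is closed and contains $\mathrm{TS}$, giving $\overline{\mathrm{TS}} \subseteq \mathrm{RW}$.

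For the reverse inclusion $\mathrm{RW} \subseteq \overline{\mathrm{TS}}$, I would take an arbitrary $u \in \mathrm{RW}$ and show it lies in the closure of $\mathrm{TS}$, i.e.\ every basic clopen neighbourhood of $u$ meets $\mathrm{TS}$. So fix $A \in u$; I must produce a nonprincipal $v \in \beta\N$ with $A \in v \otimes v$. Since $u \in \mathrm{RW}$, there is an infinite $H \subseteq \N$ with $[H]^2 \subseteq A$. Now let $v$ be any nonprincipal ultrafilter on $\N$ containing $H$ (such $v$ exists because $H$ is infinite). The claim is that $A \in v \otimes v$: indeed, unwinding the definition of the tensor product, $[H]^2 \subseteq A$ together with $H \in v$ should force $A \in v \otimes v$, because for $v$-almost-all first coordinates $n \in H$, the section $A_n = \set{m \mid (n,m) \in A}$ contains $\set{m \in H \mid m > n}$, which is in $v$ as a cofinite-in-$H$ set intersected with $H$. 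This exhibits an element of $\mathrm{TS}$ in the chosen neighbourhood of $u$, and since $A \in u$ was arbitrary, $u \in \overline{\mathrm{TS}}$.

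The step requiring the most care is the verification that $A \in v \otimes v$ from $[H]^2 \subseteq A$ and $H \in v$. The subtlety is that $[H]^2$ only controls pairs $(n,m)$ with $n < m$, so for a given $n$ the section $A_n$ is guaranteed to contain only $\set{m \in H \mid m > n}$, not all of $H$; I must check this truncated set is still in the nonprincipal ultrafilter $v$. This holds because $v$ is nonprincipal, hence contains no finite set, so $\set{m \in H \mid m \le n}$ is not in $v$, whence its complement within $H$, namely $\set{m \in H \mid m > n}$, is in $v$. Thus $A_n \in v$ for every $n \in H$, so $\set{n \mid A_n \in v} \supseteq H \in v$, giving $A \in v \otimes v$ as required. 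I expect this to be the main (though mild) obstacle; the closedness of $\mathrm{RW}$ is routine once phrased via the fixed ``bad'' set $A$.
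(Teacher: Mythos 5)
Your proposal is correct and takes essentially the same approach as the paper: $\mathrm{TS}\subseteq\mathrm{RW}$ together with closedness of $\mathrm{RW}$ (which the paper dismisses as clear from the definition, exactly via the fixed ``bad'' set you describe) gives one inclusion, and for the other both arguments extract, from $A\in u$ and $[H]^2\subseteq A$, a nonprincipal $v$ with $H\in v$ and verify $A\in v\otimes v$ section by section. Your choice of $v$ as an arbitrary nonprincipal ultrafilter containing $H$ is only a cosmetic repackaging of the paper's extension of the FIP family $\{H_h\mid h\in H\}$ with $H_h=\{x\in H\mid x>h\}$, since nonprincipality together with $H\in v$ already forces each $H_h\in v$ --- which is precisely the truncation point you flag and handle correctly.
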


\begin{proof} It is clear from \Cref{defin:rw} that $\mathrm{RW}$ is a closed subset of $\beta\mathbb N^2 $, and it contains every $u\otimes u$ by \Cref{thm:tensors witness Ramsey Theorem}, so it clearly includes $\overline{\mathrm{TS}}$. Conversely, let $u\in\mathrm{RW}$, fix $X\in u$, and
choose an infinite $H\subseteq \mathbb N$ with $[H]^2\subseteq X$. For $h\in H$ let $H_h=\{x\in H\mid x>h\}$; the family $\{H_h\mid h\in H\}$ has the FIP, and furthermore every finite intersection of this family is infinite. Therefore, it can be extended to a nonprincipal ultrafilter $v$. For every $h\in H$ the fiber $([H]^2)_h=H_h$ belongs to $v$, and so $H=\{n\in\N\mid ([H]^2)_n\in v\}\in v$, i.e.\ $X\supseteq[H]^2\in v\otimes v$. \end{proof}

We will need the following slight strengthening of \Cref{fact:simprop}(\ref{point:fideq}). 

\begin{pr}\label{pr:bddto1}
Let $f,g\from \mathbb N^n\to \mathbb N^l$. If $g$ is bounded-to-one, i.e.\ there is $k\in \mathbb N$ such that every fiber of $g$ has size at most $k$, and $f(\alpha)\sim g(\alpha)$, then $f(\alpha)=g(\alpha)$.
\end{pr}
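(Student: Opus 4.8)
The plan is to reduce to the already-established case Fact~\ref{fact:simprop}(\ref{point:fideq}) by exploiting a bounded ``section'' of $g$. The crucial observation is that, since $g$ is bounded-to-one with all fibers of size at most $k$, one can choose finitely many functions $h_1,\ldots,h_k\from \mathbb N^l\to \mathbb N^n$ that jointly invert $g$. Concretely, enumerate each nonempty fiber as $g^{-1}(w)=\set{v^w_1,\ldots,v^w_{j_w}}$ with $j_w\le k$, and set $h_i(w)\coloneqq v^w_i$ when $i\le j_w$ (and $h_i(w)$ arbitrary otherwise). Then, by construction, the following holds in $\mathbb N$:
\[
  \forall v\in \mathbb N^n\ \bigvee_{i=1}^{k} v=h_i(g(v)).
\]
This is a genuine first-order sentence over the full structure on $\mathbb N$, being a finite disjunction of equalities between terms built from the symbols $g,h_1,\ldots,h_k$, so by transfer it also holds in $\star \mathbb N$.

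Applying the transferred sentence to $\alpha$ yields an index $i_0\le k$ with $\alpha=h_{i_0}(g(\alpha))$. Now set $\beta\coloneqq g(\alpha)\in \star \mathbb N^l$ and consider the function $F\coloneqq f\circ h_{i_0}\from \mathbb N^l\to \mathbb N^l$. From $\alpha=h_{i_0}(\beta)$ we obtain $f(\alpha)=F(\beta)$. The hypothesis $f(\alpha)\sim g(\alpha)$ then reads $F(\beta)\sim\beta$, and since $F$ is a function $\mathbb N^l\to \mathbb N^l$, Fact~\ref{fact:simprop}(\ref{point:fideq}) applied to $F$ and $\beta$ gives $F(\beta)=\beta$. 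Therefore $f(\alpha)=F(\beta)=\beta=g(\alpha)$, as desired.

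The only mildly delicate point is the construction of the inverting functions $h_i$ and the verification that the displayed sentence is first-order; but this is immediate from the bound $k$ on fiber sizes, which makes the disjunction finite. Everything else is a direct appeal to transfer and to the already-available Fact~\ref{fact:simprop}(\ref{point:fideq}), so I do not expect any substantial obstacle beyond setting up this bounded inversion correctly.
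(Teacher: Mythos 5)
Your proof is correct, and it takes a genuinely shorter route than the paper's, although both start from the same combinatorial seed: indexing each element of $\mathbb N^n$ by its position (at most $k$) inside its $g$-fiber. The paper turns this into a partition of $\mathbb N^n$ into the sets $C_i$ of $i$-th fiber points, shrinks the piece containing $\alpha$ to a suitable $C\in\tp(\alpha)$ with various infinitude conditions, extends $g\restriction C$ to a bijection $h$, and then settles the bijective case by applying \Cref{fact:simprop}(\ref{point:simpushf}) and~(\ref{point:fideq}) to $g\inverse\circ f$ at $\alpha$. You instead package the indexing into $k$ standard sections $h_1,\ldots,h_k$ satisfying $\forall v\,\bigvee_{i\le k} v=h_i(g(v))$ (a legitimate first-order sentence in the full structure, so transfer applies — equivalently, $\alpha\in\star C_{i_0}$ for $C_{i_0}=\set{v\mid v=h_{i_0}(g(v))}$), and then apply \Cref{fact:simprop}(\ref{point:fideq}) to $F=f\circ h_{i_0}$ at the point $\beta=g(\alpha)$ rather than at $\alpha$. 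This change of base point is exactly what buys the simplification: the chain $F(\beta)=f(h_{i_0}(g(\alpha)))=f(\alpha)\sim g(\alpha)=\beta$ needs only a one-sided section of $g$ at $\alpha$, never a two-sided inverse, so the bijectivity reduction, the ultrafilter splitting, and the ``routine'' extension argument all become unnecessary; you also never need the paper's initial reduction to $\alpha\notin\mathbb N^n$. The only price is the (trivial) verification that the $h_i$ exist and that the covering disjunction is first-order, which your bound $k$ on fiber sizes guarantees, so the argument is complete as written.
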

\begin{proof}
  Without loss of generality $\alpha\notin \mathbb N^n$, otherwise the conclusion trivially holds.

If $g$ is bijective, then by \Cref{fact:simprop}(\ref{point:simpushf}) $\alpha\sim g\inverse(f(\alpha))$, so it suffices to apply \Cref{fact:simprop}(\ref{point:fideq}) to $g\inverse\circ f$. The general case can be reduced to that of a bijection as follows.
  
For $i\le k$, let $C_i\subseteq \mathbb N^n$ be the set of those $a\in \mathbb N^n$  that are the $i$-th point of $g\inverse(g(a))$ in the lexicographical order. Let $i$ be such that $\alpha\in \star C_i$. By repeatedly splitting $C_i$ and selecting the piece belonging to $\tp(\alpha)$, we now find some  $C\in \tp(\alpha)$ such that $C\subseteq C_i$ and all of $C_i\setminus C$, $g(C)$ and $g(\mathbb N^n\setminus C)$ are infinite. By a routine argument, there is a bijective $h$ that agrees with $g$ on $C$, and in particular $h(\alpha)=g(\alpha)$, which reduces the problem to the case above.
\end{proof}

\begin{pr}\label{pr:ET}
   Let $\alpha,\beta\in\star{\N}\setminus\N$. The following properties hold.
   \begin{enumerate}
   \item\label{point:RW1} $(\alpha,\beta)\models \mathrm{RW}$ if and only if, for all $A\subseteq \N^{2}$, if $(\alpha,\beta)\in\star{A}$ then there exists a pair $(\gamma,\delta)\in\star{A}$ with $(\gamma,\delta)\models \mathrm{TS}$.
   \item\label{point:ET3} If $(\alpha,\beta)\models\mathrm{TS}$  and $f\from \N\rightarrow \N$ is such that $f(\alpha)\ne f(\beta)$ then $(f(\alpha),f(\beta))\models\mathrm{TS}$.
   \item \label{point:ET7}If $(\alpha,\beta)\models\mathrm{TS}$ and $g\from \N\rightarrow\N$ is such that $g(\beta)\ne g(\alpha)$, then for all $f\from \N\rightarrow\N$ $g(\beta)>f(\alpha)$.
   \end{enumerate}
\end{pr}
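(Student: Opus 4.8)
The plan is to settle the three points separately, in each case by reducing to a tool already in hand: \Cref{thm: RW and tensor} for point~\ref{point:RW1}, and the push-forward behaviour of tensor pairs (\Cref{rem:tensorpshfwd}) together with \Cref{puritz} for the other two.

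For point~\ref{point:RW1}, I would simply transcribe \Cref{thm: RW and tensor} into nonstandard language. A basic open neighbourhood of $\tp(\alpha,\beta)$ in $\beta\N^2$ has the form $\set{v\mid A\in v}$ with $A\in\tp(\alpha,\beta)$, i.e.\ with $(\alpha,\beta)\in\star A$. By definition of topological closure, $\tp(\alpha,\beta)\in\overline{\mathrm{TS}}$ iff every such neighbourhood meets $\mathrm{TS}$, that is, iff for every $A$ with $(\alpha,\beta)\in\star A$ there is a tensor square $\tp(\gamma,\delta)$ with $A\in\tp(\gamma,\delta)$, equivalently with $(\gamma,\delta)\in\star A$ and $(\gamma,\delta)\models\mathrm{TS}$. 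Since $\overline{\mathrm{TS}}=\mathrm{RW}$ by \Cref{thm: RW and tensor}, and $(\alpha,\beta)\models\mathrm{RW}$ means exactly $\tp(\alpha,\beta)\in\mathrm{RW}$, this is precisely the asserted equivalence, so point~\ref{point:RW1} is a restatement of \Cref{thm: RW and tensor}.

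For point~\ref{point:ET3}, write $\tp(\alpha,\beta)=u\otimes u$ with $u$ nonprincipal; taking marginals gives $\tp(\alpha)=\tp(\beta)=u$, so $\alpha\sim\beta$ and hence $f(\alpha)\sim f(\beta)$ by \Cref{fact:simprop}(\ref{point:simpushf}). By \Cref{rem:tensorpshfwd}, $(f(\alpha),f(\beta))$ is a tensor pair, so $\tp(f(\alpha),f(\beta))=\tp(f(\alpha))\otimes\tp(f(\beta))=f(u)\otimes f(u)$, which is a tensor square as soon as $f(u)=\tp(f(\alpha))$ is nonprincipal, i.e.\ as soon as $f(\alpha)\notin\N$. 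This is where the hypothesis $f(\alpha)\ne f(\beta)$ enters: were $f(\alpha)=n\in\N$, then from $f(\alpha)\sim f(\beta)$ the element $f(\beta)$ would also generate the principal ultrafilter at $n$, forcing $f(\beta)=n=f(\alpha)$, a contradiction. For point~\ref{point:ET7} I would fix an arbitrary $f\from\N\to\N$ and apply \Cref{rem:tensorpshfwd} with $f$ on the first coordinate and the given $g$ on the second, obtaining that $(f(\alpha),g(\beta))$ is a tensor pair; \Cref{puritz} applied to it with the identity function then yields $g(\beta)\in\N$ or $f(\alpha)<g(\beta)$. To exclude the first alternative I reuse the observation from point~\ref{point:ET3}: since $\alpha\sim\beta$ we have $g(\alpha)\sim g(\beta)$, so $g(\beta)\in\N$ would force $g(\alpha)=g(\beta)$, against the hypothesis. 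Hence $g(\beta)>f(\alpha)$ for every $f$.

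These arguments are short, so I do not anticipate a genuine obstacle; the only steps requiring care are the correct unwinding of the topological closure in point~\ref{point:RW1}, and the small recurring lemma—used in both remaining points—that two $\sim$-equivalent hypernaturals which are distinct must both be nonstandard. It is exactly this observation that upgrades the bare inequalities $f(\alpha)\ne f(\beta)$ and $g(\alpha)\ne g(\beta)$ into the nonstandardness needed to invoke nonprincipality and \Cref{puritz}.
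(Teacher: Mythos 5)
Your proposal is correct and follows essentially the same route as the paper: point~\ref{point:RW1} by unwinding \Cref{thm: RW and tensor} in terms of generators, and points~\ref{point:ET3} and~\ref{point:ET7} via \Cref{rem:tensorpshfwd} together with the observation that distinct $\sim$-equivalent elements must be nonstandard. The only cosmetic difference is that in point~\ref{point:ET7} you invoke \Cref{puritz} directly where the paper cites \Cref{rem:largerthandcl}, which is itself an immediate consequence of Puritz' Theorem.
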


\begin{proof}
  \begin{enumerate}[wide=0pt]
  \item  This is just a reformulation of  \Cref{thm: RW and tensor} in terms of generators.
  \item By \Cref{rem:tensorpshfwd} $(f(\alpha),f(\beta))$ is a tensor pair, and by \Cref{fact:simprop}(\ref{point:simpushf}) $f(\alpha)\sim f(\beta)$. This, together with $f(\alpha)\ne f(\beta)$, implies $f(\alpha)\notin \mathbb N$, hence the conclusion.
  \item By \Cref{rem:tensorpshfwd} $(f(\alpha), g(\beta))$ is a tensor pair.  As $g(\beta)\ne g(\alpha)$ and $\beta\sim\alpha$, we must have $g(\beta)\notin \mathbb N$, hence we conclude by \Cref{rem:largerthandcl}.\qedhere
  \end{enumerate}
\end{proof}

\begin{pr}[Properties of Ramsey pairs]\label{co:RWbasics}
   Let $(\alpha,\beta)\models \mathrm{RW}$ and $f,g\from \mathbb N\to \mathbb N$. The following properties hold.
   \begin{enumerate}
	\item\label{point:RWfrw} If $f(\alpha)\neq f(\beta)$ then $(f(\alpha),f(\beta))\models\mathrm{RW}$.
        \item \label{point:RW7}If  $g(\alpha)\ne g(\beta)$, then $f(\alpha)<g(\beta)$.
        \item \label{point:RWWP} Either $f(\alpha)=f(\beta)$ or $\alpha<f(\beta)$.
        \item \label{point:RW8} $f(\alpha)<\beta$. In particular $\alpha<\beta$.
        \item If $P,Q\in\Z[x]$, with $Q$ of degree at least $1$, then $\abs{P(\alpha)}<\abs{Q(\beta)}$.
        \item\label{point:RWfintoone} If $g$ is finite-to-one, then $f(\alpha)<g(\beta)$.
        \item\label{point:RW5}  $\alpha\sim \beta$.
   \end{enumerate}
\end{pr}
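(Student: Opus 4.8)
The plan is to reduce each of the seven assertions to the corresponding statement about \emph{tensor squares}, and then dispatch it using the tensor toolkit of \Cref{pr:ET}. The engine throughout is \Cref{pr:ET}(\ref{point:RW1}): whenever $(\alpha,\beta)\models\mathrm{RW}$ and $A\subseteq\mathbb N^2$ satisfies $(\alpha,\beta)\in\star A$, the internal set $\star A$ already contains a generator $(\gamma,\delta)$ of a tensor square. So, to prove a first-order inequality about $(\alpha,\beta)$, I encode its negation together with any side conditions I want to survive the passage into a single $A$, extract such a $(\gamma,\delta)\in\star A$, and reach a contradiction on the tensor side; to prove an $\mathrm{RW}$-membership or a $\sim$-statement, I instead transport the defining property of the target through the same extraction.

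For the first assertion I run the engine outwards: to verify $(f(\alpha),f(\beta))\models\mathrm{RW}$ through \Cref{pr:ET}(\ref{point:RW1}), I take an arbitrary $B$ with $(f(\alpha),f(\beta))\in\star B$, pull it back to $A=\set{(a,b)\mid (f(a),f(b))\in B,\ f(a)\ne f(b)}$, extract a tensor square $(\gamma,\delta)\in\star A$, and push forward: $(f(\gamma),f(\delta))\in\star B$ is again a tensor square by \Cref{pr:ET}(\ref{point:ET3}), as $f(\gamma)\ne f(\delta)$. For the second assertion I argue by contradiction: assuming $f(\alpha)\ge g(\beta)$, I set $A=\set{(a,b)\mid f(a)\ge g(b),\ g(a)\ne g(b)}$, extract $(\gamma,\delta)\in\star A$ with $(\gamma,\delta)\models\mathrm{TS}$, and apply \Cref{pr:ET}(\ref{point:ET7}) to $g$ to obtain $g(\delta)>f(\gamma)$, contradicting $f(\gamma)\ge g(\delta)$. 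The third assertion is the instance of the second with the identity in the role of $f$ and the given $f$ in the role of $g$. For the fourth I first note $\alpha\ne\beta$ straight from \Cref{defin:rw}: if $\alpha=\beta$ then the diagonal lies in $\tp(\alpha,\beta)$, yet no $[H]^2$ can sit inside the diagonal; then the second assertion with $g=\mathrm{id}$ gives $f(\alpha)<\beta$, and the case $f=\mathrm{id}$ gives $\alpha<\beta$.

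The only genuinely new ingredient is required for the finite-to-one assertion, and it is a fibre computation inside $u\otimes u$: if $(\gamma,\delta)\models\mathrm{TS}$, say $\tp(\gamma,\delta)=u\otimes u$ with $u$ nonprincipal, and $g$ is finite-to-one, then $g(\gamma)\ne g(\delta)$, for otherwise $\set{(a,b)\mid g(a)=g(b)}\in u\otimes u$ would force the fibre $g\inverse(g(n))\in u$ for $u$-many $n$, impossible since these fibres are finite. Granting this, the finite-to-one assertion follows by extracting a tensor square from $A=\set{(a,b)\mid f(a)\ge g(b)}$ and applying \Cref{pr:ET}(\ref{point:ET7}); the polynomial estimate is then its special case $f=\abs{P}$, $g=\abs{Q}$, the map $\abs{Q}$ being finite-to-one because $\deg Q\ge 1$ (a harmless truncation deals with their being a priori valued in $\mathbb N_0$). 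Finally, for $\alpha\sim\beta$ I use that the two coordinates of a tensor square generate the same ultrafilter, both marginals of $u\otimes u$ being $u$; hence if some $S$ separated $\alpha$ from $\beta$, extracting a tensor square from $\star\bigl(S\times(\mathbb N\setminus S)\bigr)$ would yield one whose coordinates are separated by $S$, a contradiction. The conceptual crux is exactly this finite-to-one step --- seeing that a finite-to-one map cannot identify the two coordinates of a tensor square --- while everything else is bookkeeping about which conditions to pack into $A$ before applying the reduction \Cref{pr:ET}(\ref{point:RW1}).
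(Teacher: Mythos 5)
Your proof is correct, and for items \ref{point:RWfrw}--\ref{point:RW8} and \ref{point:RW5} it is essentially the paper's own argument: the paper proves \ref{point:RWfrw} with exactly your pull-back-and-push-forward through \Cref{pr:ET}(\ref{point:RW1}) and \Cref{pr:ET}(\ref{point:ET3}), and proves \ref{point:RW7} by observing that $\{(x,y)\mid g(x)\ne g(y)\to f(x)<g(y)\}$ lies in every element of $\mathrm{TS}$ by \Cref{pr:ET}(\ref{point:ET7}) and then invoking $\mathrm{RW}=\overline{\mathrm{TS}}$ (\Cref{thm: RW and tensor}) --- your contradiction-plus-extraction phrasing is the same argument routed through \Cref{pr:ET}(\ref{point:RW1}), which is itself just that closure theorem restated in terms of generators. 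Your explicit check that $\alpha\ne\beta$ (the diagonal cannot contain any $[H]^2$) is a small gap the paper leaves implicit when it specialises \ref{point:RW7} to $g=\id$; nice catch. The genuine divergence is item \ref{point:RWfintoone}, and consequently the polynomial item: the paper derives \ref{point:RWfintoone} from the already-established \ref{point:RW8} applied to the auxiliary standard function $h(n)\coloneqq\max g^{-1}([1,f(n)])$ (well-defined since $g$ is finite-to-one; then $\beta>h(\alpha)$ transfers to $g(\beta)>f(\alpha)$), and treats the polynomial inequality as a direct special case of \ref{point:RW7}; you instead prove the lemma that a finite-to-one $g$ cannot identify the two coordinates of a tensor square --- your fibre computation in $u\otimes u$ is correct, since $\{(a,b)\mid g(a)=g(b)\}\in u\otimes u$ would put a finite fibre into the nonprincipal $u$ --- and then run the same \Cref{pr:ET}(\ref{point:ET7}) extraction, obtaining the polynomial case from \ref{point:RWfintoone}. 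Both routes are sound and of comparable length: the paper's trick stays entirely on the standard-function side and recycles \ref{point:RW8}, while your lemma in effect shows that for finite-to-one $g$ the hypothesis $g(\alpha)\ne g(\beta)$ in \ref{point:RW7} is automatic on $\mathrm{TS}$, which makes \ref{point:RWfintoone} one more instance of the same closed-condition transfer and is a reusable observation in its own right.
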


\begin{proof}
  \begin{enumerate}[wide=0pt]
    \item Fix $A\in\tp(f(\alpha),f(\beta))$ and let $B=(f,f)^{-1}(A)\cap\{(a,b)\mid f(a)\neq f(b)\}$. By assumption, $(\alpha,\beta)\in \star B$, so by \Cref{pr:ET}(\ref{point:RW1}) there exists $(\gamma,\delta)\in\star B$ with $(\gamma,\delta)\models \mathrm{TS}$. As $f(\gamma)\neq f(\delta)$, the pair $(f(\gamma),f(\delta))\in\star{A}$ satisfies $(f(\gamma),f(\delta))\models \mathrm{TS}$ by   \Cref{pr:ET}(\ref{point:ET3}). We conclude by \Cref{pr:ET}(\ref{point:RW1}).
    \item  For fixed $f,g$, the set $\set{(x,y)\mid g(x)\ne g(y)\to f(x)<g(y)}$ belongs to every element of $\mathrm{TS}$ by \Cref{pr:ET}(\ref{point:ET7}). The conclusion then follows by  \Cref{thm: RW and tensor}.
    \item Immediate from~\ref{point:RW7}.
  \item  This is a special case of~\ref{point:RW7}.
  \item This too is a special case of~\ref{point:RW7}.

  \item This follows by applying~\ref{point:RW8} to the function $h(n)\coloneqq \max g\inverse([1,f(n)])$, which is well-defined because $g$ is finite-to-one.
      \item  Equivalence of $\alpha$ and $\beta$ is a closed condition and holds on $\mathrm{TS}$, so we conclude by \Cref{thm: RW and tensor}.\qedhere
  \end{enumerate}
\end{proof}

 \Cref{thm: RW and tensor} allows us to give a nonstandard description of $\mathrm{RW}$, \Cref{thm:rwprojinttens} below, that makes use of internal ultrafilters, that is, elements of $\star(\beta \mathbb N^k)$.

\begin{defin} Let $\mathfrak u\in \star(\beta\N^{2})$. We let $\st(\mathfrak u)\coloneqq \{A\subseteq\N^{2}\mid \star{A}\in \mathfrak u\}\in\beta\N^{2}$.\end{defin}

\begin{rem} The notation $\st(\mathfrak u)$ is consistent with viewing $\star(\beta\N^{2})$ topologically as a nonstandard extension of $\beta\N^2$, namely $\st(\mathfrak u)=u$ if and only if $\mathfrak u\in \star{A}$ for all neighbourhoods $A$  of $u$. \end{rem}

Below, we abuse the notation and write $\otimes$ for the nonstandard extension of $\otimes$ to $\star(\beta \mathbb N^2)$.

\begin{thm}\label{thm:rwprojinttens}
  $v\in \mathrm{RW}$ if and only if there is $\mathfrak u\in \star(\beta \mathbb N\setminus \mathbb N)$ such that $v=\st(\mathfrak u\otimes \mathfrak u)$.
\end{thm}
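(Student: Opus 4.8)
The plan is to reduce the statement to the topological identity $\mathrm{RW}=\overline{\mathrm{TS}}$ provided by \Cref{thm: RW and tensor}, and then to translate the closure operation into nonstandard language via standard parts of internal points, exactly as anticipated by the remark preceding the theorem.

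First I would record the general principle that, in a compact Hausdorff space $X$, the closure of any $S\subseteq X$ is precisely the set of standard parts of internal points of $\star S$; that is, $\overline S=\set{\st(\mathfrak w)\mid \mathfrak w\in \star S}$. Since $\beta\mathbb N^2$ is compact Hausdorff and we work in a $\mathfrak c^+$-saturated extension, this applies with $X=\beta\mathbb N^2$ and $S=\mathrm{TS}$. Combined with \Cref{thm: RW and tensor} it already yields $\mathrm{RW}=\set{\st(\mathfrak w)\mid \mathfrak w\in \star{\mathrm{TS}}}$. Second, I would unwind $\star{\mathrm{TS}}$ by transfer: the set $\mathrm{TS}$ is defined by the condition ``$w=u\otimes u$ for some nonprincipal $u$'', so transfer gives that $\mathfrak w\in \star{\mathrm{TS}}$ if and only if $\mathfrak w=\mathfrak u\otimes\mathfrak u$ for some $\mathfrak u\in \star(\beta\mathbb N\setminus\mathbb N)$, where $\otimes$ denotes the nonstandard extension of the tensor product. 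Substituting this characterisation of $\star{\mathrm{TS}}$ into the previous expression gives exactly $\mathrm{RW}=\set{\st(\mathfrak u\otimes\mathfrak u)\mid \mathfrak u\in \star(\beta\mathbb N\setminus\mathbb N)}$, which is the claim.

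The routine but essential point to verify is the closure principle itself, for which I would use the neighbourhood description of $\st$ noted in the remark. For the inclusion $\set{\st(\mathfrak w)\mid \mathfrak w\in \star S}\subseteq \overline S$, given $\mathfrak w\in \star S$ and a basic clopen neighbourhood $\hat A\coloneqq\set{w\mid A\in w}$ of $u\coloneqq\st(\mathfrak w)$, one has $\star A\in\mathfrak w$, so $\mathfrak w\in \star{\hat A}\cap \star S=\star(\hat A\cap S)$, whence $\hat A\cap S\ne\emptyset$ by transfer; thus $u\in\overline S$. The reverse inclusion is where saturation enters: for $u\in\overline S$, any finite intersection $\hat{A_1}\cap\dots\cap\hat{A_n}$ with $A_i\in u$ equals $\widehat{A_1\cap\dots\cap A_n}$, a basic neighbourhood of $u$, hence meets $S$, so each $\star(\hat{A_i}\cap S)$ and their finite intersections are nonempty by transfer; by $\mathfrak c^+$-saturation there is a single $\mathfrak w\in\star S$ lying in $\star{\hat A}$ for every $A\in u$, and since $u$ is an ultrafilter this forces $\st(\mathfrak w)=u$.

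I expect this saturation argument—matching the neighbourhood filter of $u$ against $S$—to be the main technical step, while everything else reduces to transfer bookkeeping and the already-established equality $\mathrm{RW}=\overline{\mathrm{TS}}$.
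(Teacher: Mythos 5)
Your proposal is correct and takes essentially the same approach as the paper, which itself introduces its proof as ``a special case of the nonstandard characterisation of topological closure'': your saturation argument matching the neighbourhood filter of $v$ against $\mathrm{TS}$ is exactly the paper's left-to-right direction, run there with the sets $\Lambda_X=\set{u\in\beta\N\setminus\N\mid X\in u\otimes u}$ and the FIP. The only cosmetic difference is right-to-left, where the paper re-derives the Ramsey property of $\st(\mathfrak u\otimes\mathfrak u)$ by transferring \Cref{thm:tensors witness Ramsey Theorem} internally and pulling the infinite set back by downward transfer, while you instead apply your general closure principle together with a second use of \Cref{thm: RW and tensor}---both are equally valid instances of the same transfer bookkeeping.
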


\begin{proof} This is a special case of the nonstandard characterisation of topological closure, but we spell out some details for the reader's convenience.

Left to right, for $X\subseteq \mathbb N^2$, define
  \[
    \Lambda_X\coloneqq\set{u\in \beta \mathbb N\setminus \mathbb N\mid X\in u\otimes u}.
  \]
 By assumption and \Cref{thm: RW and tensor}, for each $X\in v$ the set $\Lambda_X$ is nonempty. In fact, the family $\set{\Lambda_X \mid X\in v}$ has the FIP, and it follows that there is $\mathfrak u\in \star(\beta \mathbb N\setminus \mathbb N)$ such that, for every $X\in v$ we have  $\star X\in \mathfrak u\otimes \mathfrak u$. This implies that $\st(\mathfrak u\otimes \mathfrak u)=v$.

  Right to left, let $\mathfrak u\in \star(\beta \mathbb N\setminus \mathbb N)$. By transfer, for every internal $B\subseteq \star \mathbb N^2$ such that $B\in \mathfrak u\otimes \mathfrak u$, there is a $\star$infinite $H\subseteq \star \mathbb N$ such that $[H]^2\subseteq B$. In particular, this applies to those $B$ of the form $\star X$ for $X\subseteq \mathbb N^2$. By applying downward transfer to the formula ``there is a $\star$infinite $H\subseteq \star \mathbb N$ such that $[H]^2\subseteq \star X$'', we find an infinite $K\subseteq \mathbb N$ such that $[K]^2\subseteq X$. It follows that $\st(\mathfrak u\otimes \mathfrak u)\in\mathrm{RW}$.
\end{proof}

\begin{pr}\label{co:intfunrw}
If $(\alpha,\beta,\gamma)$ is a tensor triple, $\beta\sim \gamma$, and $f\from \mathbb N^2\to \mathbb N$, is such that $f(\alpha,\beta)\ne f(\alpha,\gamma)$, then $(f(\alpha,\beta), f(\alpha,\gamma))\models \mathrm{RW}$.
\end{pr}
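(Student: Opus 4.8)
The plan is to verify directly the criterion of \Cref{pr:ET}(\ref{point:RW1}): writing $u\coloneqq\tp(\beta)=\tp(\gamma)$ and $w\coloneqq\tp(\alpha)$, it suffices to show that for every $A\subseteq\mathbb N^2$ with $(f(\alpha,\beta),f(\alpha,\gamma))\in\star A$, the set $\star A$ contains a pair satisfying $\mathrm{TS}$. I first collect the preliminaries I will lean on. Since $\beta\sim\gamma$ and $(\alpha,\beta,\gamma)$ is a tensor triple, $\tp(\alpha,\beta,\gamma)=w\otimes u\otimes u$. The ultrafilter $u$ is nonprincipal: otherwise $\beta=\gamma\in\mathbb N$ and $f(\alpha,\beta)=f(\alpha,\gamma)$, against the hypothesis; hence $u\otimes u\in\mathrm{TS}$. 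Finally, projecting to coordinates $1,2$ and to $1,3$ shows $(\alpha,\beta)\sim(\alpha,\gamma)$ (both have type $w\otimes u$), so $f(\alpha,\beta)\sim f(\alpha,\gamma)$ by \Cref{fact:simprop}(\ref{point:simpushf}); being equivalent yet distinct, both must be nonstandard, so \Cref{pr:ET} indeed applies to the pair $(f(\alpha,\beta),f(\alpha,\gamma))$.

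For the main step, fix such an $A$ and set
\[
  B\coloneqq\set{(x,y,z)\in\mathbb N^3\mid (f(x,y),f(x,z))\in A \text{ and } f(x,y)\ne f(x,z)}.
\]
Folding the inequality $f(x,y)\ne f(x,z)$ into $B$ is deliberate: it will guarantee that the pair produced below is genuinely off-diagonal, which is exactly the hypothesis needed to invoke \Cref{pr:ET}(\ref{point:ET3}). Since $(f(\alpha,\beta),f(\alpha,\gamma))\in\star A$ and $f(\alpha,\beta)\ne f(\alpha,\gamma)$, the triple $(\alpha,\beta,\gamma)$ lies in $\star B$, i.e.\ $B\in w\otimes u\otimes u$. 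Unwinding the definition of the tensor product, this says precisely that $\set{x\mid B_x\in u\otimes u}\in w$, where $B_x\coloneqq\set{(y,z)\mid (x,y,z)\in B}$; in particular this set is nonempty, so I may fix a single standard $x$ with $B_x\in u\otimes u$.

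To finish, I choose a generator $(\beta',\gamma')$ of $u\otimes u$, which exists by saturation and satisfies $(\beta',\gamma')\models\mathrm{TS}$ as $u$ is nonprincipal. Because $B_x\in u\otimes u=\tp(\beta',\gamma')$, we have $(\beta',\gamma')\in\star B_x$, and the definition of $B$ then gives both $(f(x,\beta'),f(x,\gamma'))\in\star A$ and $f(x,\beta')\ne f(x,\gamma')$. Applying \Cref{pr:ET}(\ref{point:ET3}) to the standard function $y\mapsto f(x,y)$ yields $(f(x,\beta'),f(x,\gamma'))\models\mathrm{TS}$, a tensor square pair lying in $\star A$. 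By \Cref{pr:ET}(\ref{point:RW1}) this completes the argument.

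The step I expect to be most delicate is ensuring that the pair extracted at the end satisfies $\mathrm{TS}$ (a tensor \emph{square}), rather than being merely a tensor product of two possibly different ultrafilters. This is exactly where $\beta\sim\gamma$ enters: it forces the last two factors of $\tp(\alpha,\beta,\gamma)$ to coincide, so each fibre $B_x$ belongs to $u\otimes u$ and its generators are tensor squares. The subsidiary obstacle is making \Cref{pr:ET}(\ref{point:ET3}) applicable, which requires the pushforward pair to be off-diagonal; building the constraint $f(x,y)\ne f(x,z)$ into $B$ from the start disposes of this uniformly, sparing us a separate argument that the relevant fibre ultrafilter is nonprincipal.
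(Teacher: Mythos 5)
Your proof is correct and follows essentially the same route as the paper's: the paper invokes \Cref{fact:coheir} for the tensor pair $(\alpha,(\beta,\gamma))$ to replace $\alpha$ by a standard $n$ with $A\in\tp(f(n,\beta),f(n,\gamma))$ and $f(n,\beta)\ne f(n,\gamma)$, which is exactly what your direct unwinding of $B\in w\otimes(u\otimes u)$ accomplishes, and both arguments then conclude via \Cref{pr:ET}(\ref{point:ET3}) applied to the section $y\mapsto f(n,y)$ and the closure criterion \Cref{pr:ET}(\ref{point:RW1}). Your explicit checks (nonprincipality of $u$, nonstandardness of the image pair, folding the off-diagonal condition into $B$) are points the paper leaves implicit, and your fresh generator $(\beta',\gamma')$ is harmless though you could have used $(\beta,\gamma)$ itself, as the paper does.
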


\begin{proof}
Given $A\in \tp(f(\alpha,\beta),f(\alpha,\gamma))$, we need to find an element of $\mathrm{TS}$ containing $A$. By associativity, the pair $(\alpha, (\beta,\gamma))$ is tensor, hence by \Cref{fact:coheir} and assumption there is $n\in \mathbb N$ such that $A\in \tp(f(n,\beta),f(n,\gamma))$ and $f(n,\beta)\ne f(n,\gamma)$. The latter, together with $(\beta,\gamma)\models\mathrm{TS}$, gives us by \Cref{pr:ET}(\ref{point:ET3}) that $(f(n,\beta),f(n,\gamma))\models\mathrm{TS}$, hence the conclusion.
\end{proof}

 \Cref{puritz} combined with \Cref{co:intfunrw} allows us to build explicit Ramsey pairs that are not tensor pairs.

\begin{eg}\label{Rosario} Let $u,v\in\beta\N$ be non-principal and let $w\in\beta\N^{2}$ be the image (or pushforward) of $u\otimes v\otimes v$ under the function $h\from \mathbb N^3\to \mathbb N^2$ defined as $h(x,y,z)=(2^x3^y, 2^x3^z)$. That is, for all $A\subseteq \N^{2}$ \[A\in w\Leftrightarrow \{(a,b,c)\mid (2^{a}3^{b},2^{a}3^{c})\in A\}\in u\otimes v\otimes v.\]
  To check that $w\in \mathrm{RW}$, 
let $(\alpha,\beta,\gamma)$ be a generator of $u\otimes v\otimes v$; so $(\alpha,\beta,\gamma)$ is a tensor triple and $\beta\sim \gamma$. Therefore, by \Cref{co:intfunrw} we have $(2^{\alpha}3^{\beta},2^{\alpha}3^{\gamma})\models\mathrm{RW}$, and by \Cref{puritz} applied with the $2$-adic valuation we obtain $(2^{\alpha}3^{\beta},2^{\alpha}3^{\gamma})\centernot\models\mathrm{TS}$.
\end{eg}

\begin{rem}
By \Cref{Rosario}  the inclusion $\mathrm{TS}\subseteq \mathrm{RW}$ is strict, hence $\mathrm{TS}$ is not closed.
\end{rem}

\section{Ramsey partition regularity}\label{Section: main defn}

Throughout this section, we will consider formulas with multiple variables playing different roles; all of these are first order formulas in the full language on $\N$. To avoid handling a terrible amount of indices, and to increase the readability, we will use the notation $\freccia{x}$ to denote a list of variables of some length $l(\freccia{x})$, whose $i$-th variable, for $i\leq l(\freccia{x})$, will be denoted by $(\freccia{x})_{i}$; namely, $\freccia{x}=\left((\freccia{x})_{1},\ldots,(\freccia{x})_{l(\freccia{x})}\right)$. We explicitly allow the degenerate case $l(\freccia x)=0$. Similar notations will be used for lists of numbers; in this case, we will write $\freccia{a}\in A$ as a shortened notation for $\freccia a\in A^{l(\freccia a)}$.
 
The most studied problem in arithmetic Ramsey theory regards the partition regularity of formulas. Often, a related notion that is considered is that of partial partition regularity:

\begin{defin} A formula $\varphi\left(\freccia{x},\freccia{y}\right)$ is \emph{partially partition regular} (PPR) in the variables $\freccia{x}$ if for all finite partitions $\N=C_{1}\cup\ldots\cup C_{r}$ there exist $i\leq r$, $\freccia{a}\in C_{i}$, and $\freccia{b}\in\N$ such that $\varphi(\freccia{a},\freccia{b})$ holds. \end{defin}

\begin{eg}\label{eg:pythPPR}
As recently shown in~\cite{frantzikinakisPartitionRegularityPythagorean2025}, the formula $x^{2}+y^{2}=z^{2}$ is PPR in the variables $x,y$.
\end{eg}
Partial partition regularity can be seen as a particular case of the following general definition:

\begin{defin}\label{def: sep PR} A formula $\varphi(\freccia{x_1},\ldots,\freccia{x_n})$ is \emph{block partition regular} in the tuples of variables $\freccia{x_1},\ldots,\freccia{x_n}$ (notation: BPR in $\freccia{x_1}\mid \ldots\mid \freccia{x_n}$) if for all finite partitions $\N=C_{1}\cup\ldots\cup C_{r}$, for every $j\leq n$ there exist $i_{j}\leq r$ and $\freccia{a_j}\in C_{i_{j}}$ such that $\varphi\left(\freccia{a_1},\ldots,\freccia{a_n}\right)$.\end{defin}

PR is the case where $n=1$, PPR in $\freccia{x_1}$ is the case where $l(\freccia{x_i})=1$ for all $i\geq 2$.

\begin{oss}\label{party} In  \Cref{def: sep PR} a partition of the variables of $\varphi$ is given; it is trivial to observe that this notion is preserved if the partition is refined, namely: if the variables appearing in $\freccia{x_i}$ are partitioned in $\freccia{y_{i,1}},\freccia{y_{i,2}}$ and $\varphi$ is BPR in $\freccia{x_1}\mid \ldots\mid \freccia{x_n}$, then it is BPR in $\freccia{x_1}\mid \ldots\mid \freccia{x_{i-1}}\mid \freccia{y_{i,1}}\mid \freccia{y_{i,2}}\mid \freccia{x_{i+1}}\mid \ldots\mid \freccia{x_{n}}$.\end{oss}

\begin{rem}
It is easy to see that  \Cref{fact:fundprop} generalises to block partition regularity as follows. A formula $\phi$ is BPR in $\bla {\freccia x}1\mid n$ if and only if, for $i\le n$, there are tuples $\freccia{\alpha_i}\in \star \mathbb N$ such that, for every $i$, we have $\bla {(\freccia{\alpha_i})}1\sim{l(\freccia{\alpha_i})}$.
\end{rem}

In this paper, we are not going to study general properties of block partition regularity; in fact, as said in the introduction, we are mainly interested to use Ramsey pairs to study ``infinitary statements'' in combinatorics close in spirit to \nameref{teo: Ramsey}. ``Infinitary statements'' is a vague concept, that could be formalised in several different ways; in this paper, we define it as the following strengthening of block partition regularity:

\begin{defin}\label{casino} We say that $\varphi\left(x,y,\freccia{z_{1}},\ldots, \freccia{z_{n}}\right)$ is \emph{Ramsey partition regular} (Ramsey PR) in $(x,y),\freccia{z_{1}}\mid\ldots\mid\freccia{z_{n}}$ if for all finite colourings $\N=C_{1}\cup\ldots\cup C_{r}$ there are a colour $i_1\le r$, an infinite set $H\subseteq C_{i_{1}}$, and colours $\bla i2,n$ such that the following holds: for all $h_{1}<h_{2}\in H$ there exist  $\freccia{a_1}\in C_{i_1}$, \ldots, $\freccia{a_n}\in C_{i_n}$ satisfying $\varphi\left(h_{1},h_{2},\freccia{a_{1}},\ldots,\freccia{a_{n}}\right)$.
\end{defin}

\begin{oss} Let us make a few comments about the above definition.
\begin{enumerate}
\item If $\varphi\left(x,y,\freccia{z_{1}},\ldots, \freccia{z_{n}}\right)$ is Ramsey PR in $(x,y),\freccia{z_{1}}\mid\ldots\mid\freccia{z_{n}}$, then it is in particular BPR in $x,y,\freccia{z_{1}}\mid\ldots\mid\freccia{z_{n}}$
\item In \Cref{casino}, $\freccia{z_{1}}$ plays a special role: when searching for ``block monochromatic witnesses'' of $\varphi\left(x,y,\freccia{z_{1}},\ldots, \freccia{z_{n}}\right)$, we search for them with $x,y,\freccia{z_{1}}$ monochromatic; in particular, when $n=1$, if the formula $\varphi\left(x,y,\freccia{z_{1}}\right)$ is Ramsey PR in $(x,y),\freccia{z_1}$ then it is in particular PR, but much more is true. For example, the fact that the formula $x+y=z$ is PR is the content of Schur's Theorem, whilst the fact that $x+y=z$ is Ramsey PR in $(x,y),z$ amounts to the fact that, if we denote $\PS(H)\coloneqq\{h_{1}+h_{2}\mid h_{1}<h_{2}\in H\}$, then for every finite colouring of $\mathbb N$ there are an infinite set $H$ and a colour $C$ such that $H\cup \PS(H)\subseteq C$.

\item In our definition, it is allowed to have $l(\freccia{z_{1}})=0$. In this case,  \Cref{casino} could be slightly simplified: the request of monochromaticity on $H$ could be dropped, as if an infinite set $H$ with the property of  \Cref{casino} exists, then a monochromatic infinite set with the same property exists by the pigeonhole principle.
\item The analogue of  \Cref{party} holds also for Ramsey partition regularity, provided one does not split $(x,y)$ into separate blocks.
\end{enumerate}
\end{oss}

In \Cref{sec:casestudy}, we will see which of the above partition regularity notions are satisfied by an important and well-studied example, namely the formula 
\[
  \varphi(x,y,z,t)\coloneqq (x+y=z)\wedge (x\cdot y=t) \wedge x\ne y.
\]

As recalled in the introduction, partition regularity problems can be rephrased both in ultrafilters and in nonstandard terms. The main result of this section is the analogous characterisation of Ramsey partition regularity, which will be central for our applications.

\begin{teo}\label{teo: nonstandard first order properties RW}
    Let $\varphi\left(x,y,\freccia{z_{1}},\ldots, \freccia{z_{n}}\right)$ be given. The following are equivalent:

    \begin{enumerate}
		\item\label{point:RWchardef} $\varphi\left(x,y,\freccia{z_{1}},\ldots, \freccia{z_{n}}\right)$ is Ramsey PR in $(x,y),\freccia{z_{1}}\mid \ldots\mid \freccia{z_{n}}$; that is, for all finite colourings $\N=C_{1}\cup\ldots\cup C_{r}$ there are a colour $i_1\le r$, an infinite set $H\subseteq C_{i_{1}}$, and colours $\bla i2,n$ such that for all $h_{1}<h_{2}\in H$ there exist  $\freccia{a_1}\in C_{i_1}$, \ldots, $\freccia{a_n}\in C_{i_n}$ satisfying $\varphi\left(h_{1},h_{2},\freccia{a_{1}},\ldots,\freccia{a_{n}}\right)$.
        
                \item \label{point:RWchardefprime}For all finite colourings $\N=C_{1}\cup\ldots\cup C_{r}$ there is an infinite set $H$ such that for all $h_{1}<h_{2}\in H$ there exist colours $\bla i1,n$ with  $h_1, h_2\in C_{i_1}$, and there exist $\freccia{a_1}\in C_{i_1}$,\ldots, $\freccia{a_n}\in C_{i_n}$ satisfying $\varphi\left(h_{1},h_{2},\freccia{a_{1}},\ldots,\freccia{a_{n}}\right)$.
                          		\item\label{point:RWchargen} There is $(\alpha, \beta)\models\mathrm{RW}$ and there are $\freccia{\gamma_{1}},\ldots,\freccia{\gamma_{n}}\in\star{\N}$ such that 
				\begin{itemize}
				\item for all $i\leq n$, for all $h,k\leq l(\freccia{z_{i}})$ we have $(\freccia{\gamma_{i}})_{h}\sim(\freccia{\gamma_{i}})_{k}$;
				\item if $l(\freccia{z_{1}})\geq 1$ then $\alpha\sim(\freccia{\gamma_{1}})_{1}$; and
				\item $\varphi\left(\alpha,\beta,\freccia{\gamma_{1}},\ldots, \freccia{\gamma_{n}}\right)$ holds.
				\end{itemize}
	 \item\label{point:RWcharultra} There exist $u\in\mathrm{RW}\subseteq\beta\N^{2}$ and $u_{1},\ldots,u_{n}\in \beta\N$ such that 
	\begin{itemize}
	\item $\pi_{1}(u)=u_{1}$, where $\pi_1$ denotes the projection on the first coordinate; and
	\item for all $A\in u, B_{1}\in u_{1},\ldots,B_{n}\in u_{n}$ there exists an infinite set $H\subseteq B_1$ such that $[H]^{2}\subseteq A$ and for all $h_{1}<h_{2}\in H$ there are $\freccia{b_{1}}\in B_{1}, \ldots, \freccia{b_{n}}\in B_{n}$ such that $\varphi(h_{1},h_{2},\freccia{b_{1}},\ldots,\freccia{b_{n}})$ holds.
	\end{itemize}
    \end{enumerate}
\end{teo}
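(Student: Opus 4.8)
The plan is to establish the four conditions are equivalent by proving $(1)\Leftrightarrow(2)$ (a purely combinatorial colour-homogenisation), $(3)\Leftrightarrow(4)$ (the dictionary between generators and ultrafilters), the easy bridge $(4)\Rightarrow(1)$, and finally the hard bridge $(1)\Rightarrow(4)$, which is the heart of the matter. Throughout it is convenient to abbreviate, for $B_1,\ldots,B_n\subseteq\N$,
\[
  W(B_1,\ldots,B_n)\coloneqq\set{(x,y)\mid \exists \freccia{b_1}\in B_1,\ldots,\freccia{b_n}\in B_n\;\varphi(x,y,\freccia{b_1},\ldots,\freccia{b_n})},
\]
and to record that $W$ is monotone, i.e.\ $W(B_1,\ldots,B_n)\subseteq W(B_1',\ldots,B_n')$ whenever $B_i\subseteq B_i'$ for all $i$. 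With this notation, the second bullet of $(4)$ is equivalent to asking that $u\in\mathrm{RW}$, $\pi_1(u)=u_1$, and $W(B_1,\ldots,B_n)\in u$ for all $B_i\in u_i$: indeed, given $A\in u$ and $B_i\in u_i$, the set $A\cap\pi_1^{-1}(B_1)\cap W(B_1,\ldots,B_n)$ lies in $u$, so by \Cref{defin:rw} it contains some $[H]^2$ with $H$ infinite, whence $H\subseteq B_1$ and every pair of $[H]^2$ witnesses $\varphi$ inside the $B_i$. This reformulation is the linchpin of the whole argument.

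For $(1)\Leftrightarrow(2)$, the implication $(1)\Rightarrow(2)$ is trivial. For $(2)\Rightarrow(1)$, I would colour each pair $\set{h_1,h_2}\in[H]^2$ by a tuple $(i_1,\ldots,i_n)$ of colours witnessing $(2)$ for that pair; since there are finitely many such tuples, \nameref{teo: Ramsey} yields an infinite $H'\subseteq H$ on which the tuple is constant, and $H'\subseteq C_{i_1}$ because every element of $H'$ is an endpoint of a pair. This is precisely $(1)$. The bridge $(4)\Rightarrow(1)$ is then immediate: choosing for each $j$ a colour $i_j$ with $C_{i_j}\in u_j$ and applying the second bullet of $(4)$ with $A=\N^2$ and $B_j=C_{i_j}$ produces an infinite $H\subseteq C_{i_1}$ with the required per-pair witnesses.

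The dictionary $(3)\Leftrightarrow(4)$ rests on \Cref{fact:simprop} and the reformulation above. For $(3)\Rightarrow(4)$ I set $u=\tp(\alpha,\beta)\in\mathrm{RW}$ and $u_i=\tp\big((\freccia{\gamma_i})_1\big)$ (well defined since the entries of each $\freccia{\gamma_i}$ are $\sim$-equivalent; the cases $l(\freccia{z_i})=0$ are handled by setting $u_1=\pi_1(u)$ and are otherwise irrelevant), with $u_1=\pi_1(u)$ forced by the hypothesis $\alpha\sim(\freccia{\gamma_1})_1$. The reformulation then holds because, for $A\in u$ and $B_i\in u_i$, applying transfer to $\varphi(\alpha,\beta,\freccia{\gamma_1},\ldots,\freccia{\gamma_n})$ gives $(\alpha,\beta)\in\star\!\big(W(B_1,\ldots,B_n)\cap A\cap\pi_1^{-1}(B_1)\big)$. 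For $(4)\Rightarrow(3)$ I would realise, using the $\mathfrak c^+$-saturation of $\star\N$, the type stating that $(x,y)\in\star A$ for all $A\in u$, that each entry of $\freccia{z_i}$ lies in $\star B$ for all $B\in u_i$, and that $\varphi(x,y,\freccia{z_1},\ldots,\freccia{z_n})$ holds; finite satisfiability of this type is exactly the content of $(4)$, applied to the intersection of the finitely many sets involved and evaluated at any pair $h_1<h_2$ of the resulting infinite $H$. The $\sim$-conditions of $(3)$ then hold automatically, and $\alpha\sim(\freccia{\gamma_1})_1$ follows from $\pi_1(u)=u_1$.

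The main obstacle is $(1)\Rightarrow(4)$, where the ultrafilters must be manufactured. First I would build $u_1,\ldots,u_n$ by compactness: for each finite colouring $\freccia C$ let $D(\freccia C)$ be the (nonempty, by $(1)$) set of colour-tuples $(i_1,\ldots,i_n)$ admitting a homogeneous $H$ as in $(1)$, and let $E_{\freccia C}\subseteq(\beta\N)^n$ be the clopen set of tuples whose classes in $\freccia C$ form an element of $D(\freccia C)$. Passing to common refinements shows that the family $(E_{\freccia C})_{\freccia C}$ has the finite intersection property — this is the one place where $(1)$ is genuinely used — so compactness of $(\beta\N)^n$ yields $(u_1,\ldots,u_n)\in\bigcap_{\freccia C}E_{\freccia C}$. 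By construction, for any $B_i\in u_i$, applying $(1)$ to the colouring by the atoms of $\langle B_1,\ldots,B_n\rangle$ produces an infinite $H\subseteq B_1$ with $[H]^2\subseteq W(B_1,\ldots,B_n)\cap\pi_1^{-1}(B_1)$; by monotonicity of $W$, every finite intersection of the family $\set{W(B_1,\ldots,B_n)\cap\pi_1^{-1}(B_1)\mid B_i\in u_i}$ still contains some $[H]^2$. The hard part is the final step, extending this family to an ultrafilter lying in $\mathrm{RW}$: by (the converse direction of) \Cref{thm: RW and tensor} any set containing some $[H]^2$ belongs to a tensor square in $\mathrm{TS}\subseteq\mathrm{RW}$, so the closed sets $\set{v\in\mathrm{RW}\mid F\in v}$ (with $F$ ranging over the family) are nonempty and, again by monotonicity of $W$, have the finite intersection property inside the compact space $\beta\N^2$; any $u$ in their intersection lies in $\mathrm{RW}$, satisfies $\pi_1(u)=u_1$ and $W(B_1,\ldots,B_n)\in u$ for all $B_i\in u_i$, which is $(4)$. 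I expect this last step to be the true obstacle, since membership in $\mathrm{RW}$ cannot be read off any single set condition and is secured only through the identification $\mathrm{RW}=\overline{\mathrm{TS}}$ of \Cref{thm: RW and tensor}.
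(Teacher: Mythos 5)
Your proposal is correct, but it closes the cycle of implications in a genuinely different way from the paper. The paper proves $(1)\Rightarrow(3)\Rightarrow(4)\Rightarrow(1)$, with the hard work done entirely on the nonstandard side in $(1)\Rightarrow(3)$: for each finite partition $P$ it introduces the set $\Lambda_P$ of tuples witnessing $\varphi$ with the required blockwise $P$-monochromaticity, observes that $(\star\Lambda_P)_P$ is downward directed, and uses saturation both to commute projection with intersection, $\bigcap_P\pi(\star\Lambda_P)=\pi\bigl(\bigcap_P\star\Lambda_P\bigr)$, and to intersect with the closed set of Ramsey pairs; condition $(3)$ then yields $(4)$ by the same short transfer argument you give (the paper's set $I$ is essentially your $W(B_1,\ldots,B_n)\cap A\cap\pi_1^{-1}(B_1)$, except that the paper puts both coordinates in $B_1$ directly, using $\alpha\sim\beta$ from \Cref{co:RWbasics}, where you use $\pi_1^{-1}(B_1)$ plus the observation that every element of an infinite $H$ is a first coordinate). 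You instead prove $(1)\Rightarrow(4)$ purely topologically in the Stone--\v{C}ech world: your clopen sets $E_{\freccia{C}}$, directed under refinement of colourings, are precisely the Stone dual of the paper's directed family $\Lambda_P$, and your second compactness step, with the closed sets $\set{v\in\mathrm{RW}\mid F\in v}$ made nonempty via $\mathrm{RW}=\overline{\mathrm{TS}}$ (\Cref{thm: RW and tensor}, or just the SPR remark following \Cref{defin:rw}) and given the FIP by monotonicity of $W$, replaces the paper's appeal to closedness of the set of Ramsey pairs in $\star\N^2$. The price of your route is one extra implication, $(4)\Rightarrow(3)$, which the paper gets for free from its ordering and which you handle by realising a type of size at most $\mathfrak c$ --- legitimate under the paper's standing $\mathfrak c^+$-saturation assumption, and your finite-satisfiability check (intersect the finitely many sets, apply $(4)$, pick any pair from $H$) is sound. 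What your approach buys is a cleaner separation of the combinatorics from the nonstandard machinery: in particular, your ``linchpin'' reformulation of the second bullet of $(4)$ as the conjunction of $u\in\mathrm{RW}$, $\pi_1(u)=u_1$, and $W(B_1,\ldots,B_n)\in u$ for all $B_i\in u_i$ is a clarifying observation that the paper leaves implicit, and your explicit treatment of the degenerate case $l(\freccia{z_1})=0$ (setting $u_1=\pi_1(u)$) patches a detail the paper's proof of $(3)\Rightarrow(4)$ glosses over. Both arguments use the same two essential ingredients --- \nameref{teo: Ramsey} to seed witnesses and a compactness/saturation principle to glue them --- so the two proofs are best viewed as dual presentations, yours in $\beta\N$, the paper's in $\star\N$.
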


\begin{proof}Left to right  in  $\ref{point:RWchardef}\Leftrightarrow\ref{point:RWchardefprime}$ is clear, as in \ref{point:RWchardef} the colours $\bla i1,n$ do not depend on $h_1,h_2$, while in $\ref{point:RWchardefprime}$ they may, and right to left is a standard application of \nameref{teo: Ramsey}.

  $\ref{point:RWchardef}\Rightarrow\ref{point:RWchargen}$ Given a finite partition $P$ of $\mathbb N$, say that a tuple of natural numbers is \emph{$P$-monochromatic} if they all belong to the same piece of $P$. Define
\begin{align*}\Lambda_{P}\coloneqq\Bigl\{(h_1,h_2,\freccia{z_{1}},\ldots,\freccia{z_{n}})\Bigm| h_1<h_2&\wedge                                                                    \varphi\left(h_{1},h_{2},\freccia{z_{1}},\ldots,\freccia{z_{n}}\right)\wedge \\
                                                                &\wedge\left(h_{1},h_{2},\freccia{z_{1}} \text{ is $P$-monochromatic}\right)  \\
                                                                &\wedge \left(\freccia{z_{2}} \text{ is $P$-monochromatic}\right)\\
&\wedge\ldots\\
&  \wedge\left(\freccia{z_{n}} \text{ is $P$-monochromatic}\right)\Bigr\} 
.\end{align*}

The family $\set{\Lambda_{P}\mid{P\text{ partition of } \mathbb N}}$, ordered by inclusion, is downward directed because $\bigcap_{i\le \ell}\Lambda_{P_i}$ contains $\Lambda_Q$, where $Q$ is the common refinement of the partitions $\bla P1,\ell$. By assumption, as pointed out immediately after \Cref{defin:rw}, if we denote by $\pi$ the projection on the first two coordinates, each  $\pi(\star\Lambda_P)$ contains a Ramsey pair. It follows that  $\set{(\alpha,\beta)\mid (\alpha,\beta)\models\mathrm{RW}}\cap \bigcap_P\pi(\star\Lambda_P)$ is the intersection of a family of closed sets with the FIP, hence contains a pair $(\alpha,\beta)$, which is by construction Ramsey.  Finally, by saturation and downward directedness, $\bigcap_P\pi(\star\Lambda_P) = \pi\left(\bigcap_P(\star\Lambda_P)\right)$, and for any $(\alpha,
\beta)$ in $\set{(\alpha,\beta)\mid (\alpha,\beta)\models\mathrm{RW}}\cap \pi\left(\bigcap_P(\star\Lambda_P)\right)$ we can find the required $\freccia{\gamma_i}$.

$\ref{point:RWchargen}\Rightarrow\ref{point:RWcharultra}$ Let $u=\tp(\alpha,\beta)$ and, for $i\leq n$, let $u_{i}=\tp(({\freccia{\gamma_{i}})_{1}})$. As $\pi_{1}(\tp(\alpha,\beta))=\tp({\alpha})$, and $\alpha\sim(\freccia{\gamma_{1}})_{1}$ by hypothesis, we have that $\pi_{1}(u)=u_{1}$.  Given $A\in u, B_{1}\in u_{1},\ldots,B_{n}\in u_{n}$, define
\[I\coloneqq\{(a,b)\in A\mid a,b\in B_1\wedge \exists \freccia{z_{1}}\in B_{1},\ldots,\exists\freccia{z_{n}}\in B_{n}\ \varphi\left(a,b,\freccia{z_{1}},\ldots, \freccia{z_{n}}\right)\}.\]
Because $\varphi\left(\alpha,\beta,\gamma_{1},\ldots, \gamma_{m}\right)$ holds we have that $(\alpha,\beta)\in \star{I}$, 
hence $I\in u$. As $u\in\mathrm{RW}$, there exists an infinite set $H\subseteq\N$ such that $[H]^{2}\subseteq I$, which gives us the conclusion.

$\ref{point:RWcharultra}\Rightarrow\ref{point:RWchardef}$ Let the partition $\N=C_{1}\cup\ldots\cup C_{r}$ be given. For $j\leq n$, let $i_{j}$ be such that $C_{i_{j}}\in u_j$. This suffices to prove the conclusion. \end{proof}

 \Cref{teo: nonstandard first order properties RW} reduces Ramsey partition regularity problems to existence problems for Ramsey pairs with certain combinatorial properties. We are going to exploit this idea, look at formulas of the form $\bigwedge_{i\leq n} f_{i}(x,y)=z_{i}$, and study their Ramsey partition regularity in $(x,y)\mid\bla z1,n$.
By \Cref{teo: nonstandard first order properties RW}, this can be rephrased as follows.

\begin{co}\label{teo:RWequiv}
    Let $f_{1},\ldots,f_{n}\from \N^2\to\N$.
    The following are equivalent:
    \begin{enumerate}
    \item  The formula  $\bigwedge_{i\leq n} f_{i}(x,y)=z_{i}$  is Ramsey PR in $(x,y)\mid\bla z1,n$.
        \item There is a Ramsey pair $(\alpha, \beta)$ such that $f_{1}(\alpha, \beta)\sim\ldots\sim f_{n}(\alpha, \beta)$.
        \item \label{point:rwnoet} There is $w\in \mathrm{RW}$ such that $f_{1}(w)=\ldots=f_{n}(w)$.
    \end{enumerate}
  \end{co}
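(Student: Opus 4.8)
The plan is to derive this corollary directly from the main characterisation theorem, \Cref{teo: nonstandard first order properties RW}, by specialising it to the particular formula $\varphi(x,y,z_1,\ldots,z_n)\coloneqq\bigwedge_{i\le n} f_i(x,y)=z_i$. The key observation is that in this formula each $\freccia{z_i}$ is a single variable (so $l(\freccia{z_i})=1$) and, crucially, the first block $\freccia{z_1}$ is \emph{not} tied to $(x,y)$ by monochromaticity in the way that would force extra constraints — so I will want to regroup the variables so that all of $z_1,\ldots,z_n$ play the role of the ``free'' blocks. Concretely, I would apply the theorem with the formula above but with the variable partition $(x,y)\mid\freccia{z_1}\mid\ldots\mid\freccia{z_n}$, i.e.\ with an empty distinguished block, which is the degenerate case $l(\freccia{z_1})=0$ that the definitions explicitly allow. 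Under this reading, \Cref{teo: nonstandard first order properties RW}(\ref{point:RWchardef}) is exactly statement~(1) of the corollary.

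First I would unwind statement~(2). By \Cref{teo: nonstandard first order properties RW}(\ref{point:RWchargen}), Ramsey partition regularity of $\varphi$ is equivalent to the existence of a Ramsey pair $(\alpha,\beta)$ together with hypernaturals $\gamma_1,\ldots,\gamma_n$ such that $\varphi(\alpha,\beta,\gamma_1,\ldots,\gamma_n)$ holds and each $\gamma_i$ is (trivially, since the blocks are singletons) ``block monochromatic'', with no linkage to $\alpha$ because the distinguished block is empty. But $\varphi(\alpha,\beta,\gamma_1,\ldots,\gamma_n)$ says precisely that $\gamma_i=f_i(\alpha,\beta)$ for each $i$. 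So the witnessing $\gamma_i$ are forced to be the values $f_i(\alpha,\beta)$, and the genuine content is that these all lie in the same monad, i.e.\ $f_1(\alpha,\beta)\sim\cdots\sim f_n(\alpha,\beta)$. This is exactly statement~(2), so the equivalence $(1)\Leftrightarrow(2)$ falls out of the characterisation theorem once the bookkeeping of the empty block is done.

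Next I would show $(2)\Leftrightarrow(3)$, which is essentially a translation between generators and ultrafilters. Recall the convention that $f(w)$ denotes the pushforward of $w\in\beta\mathbb N^2$ along $f\from\mathbb N^2\to\mathbb N$, which by \Cref{fact:simprop} is well-defined on equivalence classes. If $(\alpha,\beta)$ is a Ramsey pair with $w\coloneqq\tp(\alpha,\beta)\in\mathrm{RW}$, then by definition $f_i(w)=\tp(f_i(\alpha,\beta))$, and the conditions $f_i(\alpha,\beta)\sim f_j(\alpha,\beta)$ translate verbatim into $f_i(w)=f_j(w)$ as ultrafilters. Conversely, given $w\in\mathrm{RW}$ with $f_1(w)=\cdots=f_n(w)$, I pick any generator $(\alpha,\beta)$ of $w$; then $(\alpha,\beta)$ is a Ramsey pair by \Cref{defin:rw}, and the same translation gives $f_1(\alpha,\beta)\sim\cdots\sim f_n(\alpha,\beta)$. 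This half is routine and uses only the definitions of Ramsey pair, pushforward, and the relation $\sim$.

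The only step that needs genuine care — and the step I would flag as the main obstacle — is the correct handling of the distinguished first block $\freccia{z_1}$ when matching the corollary's hypothesis to the theorem's statement. In \Cref{casino} the block $\freccia{z_1}$ is required to be monochromatic \emph{together with} $h_1,h_2$, whereas here we want all of $z_1,\ldots,z_n$ to be on equal footing with no tie to $(x,y)$. The clean resolution is to invoke the theorem with $l(\freccia{z_1})=0$ (the empty distinguished block, explicitly permitted by the definitions), so that the second bullet of \Cref{teo: nonstandard first order properties RW}(\ref{point:RWchargen}) — the clause ``if $l(\freccia{z_1})\ge1$ then $\alpha\sim(\freccia{\gamma_1})_1$'' — is vacuous, and the partition in statement~(1) of the corollary is read as $(x,y)\mid z_1\mid\cdots\mid z_n$ rather than $(x,y),z_1\mid\cdots\mid z_n$. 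Once this alignment is made, everything else is a direct substitution into the already-proved characterisation, and no further combinatorial work is required.
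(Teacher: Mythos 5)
There is a genuine gap, and it lies exactly in the step you flagged as needing care: you resolved the block bookkeeping the wrong way. In the paper's notation, commas join variables \emph{within} a block and bars separate blocks, so the corollary's hypothesis ``Ramsey PR in $(x,y)\mid z_1,\ldots,z_n$'' means that $z_1,\ldots,z_n$ form a \emph{single} block (all monochromatic in one common colour), not $n$ separate singleton blocks as in your reading $(x,y)\mid z_1\mid\cdots\mid z_n$. This is not a cosmetic distinction. With your decomposition, the first bullet of \Cref{teo: nonstandard first order properties RW}(\ref{point:RWchargen}) is vacuous for every block (singletons are trivially internally monochromatic), and the second bullet is vacuous since $l(\freccia{z_1})=0$; the specialised condition then asserts only the existence of a Ramsey pair $(\alpha,\beta)$ and the values $\gamma_i=f_i(\alpha,\beta)$, with \emph{no} $\sim$-relations among them. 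Your sentence ``the genuine content is that these all lie in the same monad'' is therefore unsupported: nothing in your set-up imposes it, and you have in effect imported statement~(2) rather than derived it. Indeed, under your reading, statement~(1) is \emph{always} true (apply \nameref{teo: Ramsey} to the colouring of $[\mathbb N]^2$ induced by the colours of $f_1(h_1,h_2),\ldots,f_n(h_1,h_2)$, allowing each $f_i$ its own colour), whereas statement~(2) can fail — e.g.\ for $f_1(x,y)=x+y$ and $f_2(x,y)=xy$ by \Cref{thm: no sums and products RW} — so the equivalence you derive would be false. Note also that the refinement remark (\Cref{party}) only goes one way: coarse blocks imply fine blocks, so replacing the single block $z_1,\ldots,z_n$ by singletons genuinely weakens~(1).

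The fix is small: specialise \Cref{teo: nonstandard first order properties RW} with $l(\freccia{z_1})=0$ (as you correctly do for the distinguished block) but with \emph{one} further block $\freccia{z_2}=(z_1,\ldots,z_n)$ of length $n$. Then the clause ``for all $h,k\leq l(\freccia{z_2})$, $(\freccia{\gamma_2})_h\sim(\freccia{\gamma_2})_k$'' gives exactly $\gamma_1\sim\cdots\sim\gamma_n$, which together with $\gamma_i=f_i(\alpha,\beta)$ (forced by the formula, as you observe) yields statement~(2) — this is precisely the paper's intent, since it presents the corollary as a direct rephrasing of the theorem. Your translation $(2)\Leftrightarrow(3)$ via generators and pushforwards is correct as written and needs no change.
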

Note that,  if one of the $f_i$ is the projection on the first (or second) coordinate, then we are actually studying the
Ramsey partition regularity in $(x,y),\bla z1,n$.
One might wonder whether, in condition~\ref{point:rwnoet}, $\mathrm{RW}$ could be strengthened to $\mathrm{TS}$. This is not the case, as we will see in \Cref{rem:noETdifference}.

\section{Pairwise sums and products: a case study}\label{sums and products}\label{sec:casestudy}
In the language introduced in this paper, the \nameref{thm:pwsp} says that the formula $\varphi(x,y,z,t)\coloneqq (x+y=z)\wedge (x\cdot y = t)$ is not Ramsey PR in $(x,y)\mid z,t$. By \Cref{teo:RWequiv} this is, in disguise, a result about Ramsey pairs, namely the following. 

\begin{thm}\label{thm: no sums and products RW}
   Let $(\alpha, \beta)\models \mathrm{RW}$. Then $\alpha+\beta\nsim\alpha\beta$.
\end{thm}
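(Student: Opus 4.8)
The plan is to show that the assumption $\alpha+\beta\sim\alpha\beta$ leads to a contradiction using the size constraints on Ramsey pairs established in \Cref{co:RWbasics}. The key observation is that $\alpha+\beta$ and $\alpha\beta$ grow at very different rates, and the equivalence relation $\sim$ is quite rigid with respect to such discrepancies.

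Here is the approach in more detail. Suppose for contradiction that $(\alpha,\beta)\models\mathrm{RW}$ and $\alpha+\beta\sim\alpha\beta$. By \Cref{co:RWbasics}(\ref{point:RW8}) we know $f(\alpha)<\beta$ for every $f\from\mathbb N\to\mathbb N$; in particular $\alpha<\beta$ and $\alpha$ is dominated by $\beta$ in a strong sense. The idea is to exploit the asymmetry between the sum $\alpha+\beta$, which is of the same order of magnitude as $\beta$, and the product $\alpha\beta$, which is roughly $\alpha$ times larger. Since $\alpha+\beta\sim\alpha\beta$, I would first apply \Cref{fact:simprop}(3): from $\alpha+\beta\sim\alpha\beta$ and the fact that these are obtained from $(\alpha,\beta)$ by the maps $(x,y)\mapsto x+y$ and $(x,y)\mapsto xy$, there should be an internal function witnessing the equivalence. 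More directly, I would look for a single-variable function that produces a contradiction: note that $\alpha\beta=\alpha+\beta$ would force $\alpha\beta-\beta=\alpha$, i.e.\ $\beta(\alpha-1)=\alpha$, which for $\alpha>1$ is impossible in $\star\mathbb N$ since it would require $\beta<\alpha$, contradicting $\alpha<\beta$.

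The cleanest route, however, is purely through the machinery of \Cref{co:RWbasics}. The statement $\alpha+\beta\sim\alpha\beta$ means $\tp(\alpha+\beta)=\tp(\alpha\beta)$, and since both are elements of $\dcl(\alpha,\beta)$ realised by applying functions to the pair, I would invoke \Cref{co:RWbasics}(\ref{point:RWfrw}) or the size estimates directly. Concretely, consider the function $g\from\mathbb N\to\mathbb N$ that is, say, the identity, and note that $\alpha\beta$ is finite-to-one in $\beta$ for fixed $\alpha>1$; the product $\alpha\beta$ exceeds $\alpha+\beta$ by an infinite amount whenever $\alpha>1$. Since on a Ramsey pair we have $\alpha\geq 2$ (as $\alpha$ is nonstandard, hence infinite), the difference $\alpha\beta-(\alpha+\beta)=(\alpha-1)(\beta-1)-1$ is infinite and positive, so $\alpha\beta>\alpha+\beta$ with infinite gap. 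By \Cref{fact:simprop}(4), if $\alpha+\beta\sim\alpha\beta$ and $\alpha+\beta<\alpha\beta$, then the gap $\alpha\beta-(\alpha+\beta)$ being infinite is consistent with $\sim$, so this alone is not a contradiction—this is the main obstacle.

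The genuine contradiction must therefore come from the \emph{rigidity} of Ramsey pairs rather than from mere size. I expect the crucial step to be the following: writing $\alpha\beta\sim\alpha+\beta$ and applying \Cref{co:RWbasics}(\ref{point:RW8}) together with \Cref{fact:simprop}(3), one obtains a $\beta'\sim\beta$ with $\alpha\beta'=\alpha+\beta$, equivalently $\beta'=(\alpha+\beta)/\alpha=1+\beta/\alpha$ (interpreting division appropriately, i.e.\ $\alpha\mid\alpha+\beta$, hence $\alpha\mid\beta$). Then $\beta'-1=\beta/\alpha<\beta$, and since $\beta'\sim\beta$ with $\beta'<\beta$, \Cref{fact:simprop}(4) forces $\beta-\beta'$ to be infinite; but $\beta'$ is, via a function of $\beta$ and $\alpha$, bounded by a definable function of $\alpha$ applied appropriately, which should collide with $\beta'\sim\beta$ through \Cref{co:RWbasics}(\ref{point:RW8}), yielding $\beta'<\beta'$, a contradiction. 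The hard part will be pinning down exactly which function realises the equivalence and ensuring the divisibility/quotient manipulations stay within the internal setting; I anticipate the cleanest formulation reduces, via \Cref{co:RWbasics}(\ref{point:RW8}), to the impossibility of $f(\alpha)\sim\beta$ for a function $f$, since $\alpha+\beta\sim\alpha\beta$ should be massageable into exactly such a forbidden equivalence.
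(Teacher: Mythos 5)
Your proposal does not close, and the central step rests on a misapplication of \Cref{fact:simprop}(3). That fact concerns \emph{standard} functions applied to whole tuples: from $\alpha+\beta\sim\alpha\beta$, i.e.\ $f(\alpha,\beta)\sim g(\alpha,\beta)$ with $f(x,y)=x+y$ and $g(x,y)=xy$, it yields a pair $(\alpha',\beta')\sim(\alpha,\beta)$ with $\alpha+\beta=\alpha'\beta'$; it does \emph{not} yield a $\beta'\sim\beta$ with $\alpha\beta'=\alpha+\beta$, because the maps $x\mapsto\alpha+x$ and $x\mapsto\alpha x$ with $\alpha$ held fixed are not standard functions. With the correct conclusion the rest of your sketch collapses: your $\beta'=1+\beta/\alpha$ depends on both variables, so \Cref{co:RWbasics}(\ref{point:RW8}), which bounds $f(\alpha)$ only for standard one-variable $f$, gives no handle on it, and $\beta'\sim\beta$ with $\beta'\ll\beta$ is not by itself contradictory: a single monad can contain equivalent elements in different archimedean classes (any $(\gamma,\delta)\models\mathrm{TS}$ has $\gamma\sim\delta$ while $\delta>f(\gamma)$ for every standard $f$, by \Cref{puritz}). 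Worse, the ``forbidden equivalence'' you hope to reduce to, $f(\alpha)\sim\beta$, is not forbidden at all: $f=\id$ realises it, since $\alpha\sim\beta$ holds for every Ramsey pair by \Cref{co:RWbasics}(\ref{point:RW5}). You correctly observe mid-proposal that size estimates alone cannot distinguish $\alpha+\beta$ from $\alpha\beta$ up to $\sim$; but the ``rigidity'' you then invoke is never instantiated by any actual statement, so no contradiction is ever derived.

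The missing idea, in the paper's proof, is a genuinely two-dimensional rigidity result, \Cref{lemma: RW then pairs are not equivalent}: for a Ramsey pair, $(f(\alpha)+f(\beta),\,g(\alpha)+g(\beta))\nsim(f(\alpha),\,g(\beta))$, up to integer shifts, whenever $f(\alpha),g(\alpha)$ are infinite. The paper applies the pair of $p$-adic digit functions $(v_p,\ell_p)$ to both sides of $\alpha\beta\sim\alpha+\beta$: since $v_p(\alpha\beta)=v_p(\alpha)+v_p(\beta)$ and $\ell_p(\alpha\beta)=\ell_p(\alpha)+\ell_p(\beta)+\varepsilon$, while $v_p(\alpha+\beta)=v_p(\alpha)$ (by \Cref{rem:exsecondacifra} together with \Cref{co:RWbasics}(\ref{point:RW7})) and $\ell_p(\alpha+\beta)=\ell_p(\beta)+\varepsilon'$, the rigidity theorem forces $v_p(\alpha)$ to be finite. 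This pins the $p$-adic class of $\alpha$ to $2$, and a final valuation computation for $(\gamma,\delta)=(\alpha-2,\beta-2)$ (again a Ramsey pair by \Cref{co:RWbasics}(\ref{point:RWfrw})) gives $\smod_p(\gamma+\delta)\equiv 2\smod_p(\gamma+\delta)\pmod p$, which is impossible. Nothing in your sketch plays the role of this pair-level obstruction; the gap is therefore not a matter of ``pinning down which function realises the equivalence'', but that the route through \Cref{fact:simprop}(3) and one-variable bounds cannot be completed at all.
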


We show how our techniques can be used to give a  short nonstandard proof of this, and in fact prove a much stronger result. Let us begin by fixing some notation.

  \begin{defin}
    Let $p,m\in\N\setminus\set1$, with $p$ prime. We denote by $v_p$ the $p$-adic valuation, and we let $\ell_m(x)\coloneqq \floor{\log_m(x)}$, both regarded as functions $\mathbb N\to \mathbb N\cup \set 0$.  We denote by $\smod_p\from \N\rightarrow\{1,\ldots,p-1\}$  the function that maps $x$ to $x/p^{v_{p}(x)}\mod p$.

When $p,m$ are clear from context, we write $\ell,v,\smod$ for brevity.
  \end{defin}
  Note that, if $x$ is written in base $p$, and we count digit positions right to left\footnote{So ``first'' means ``least significant''.} and starting from $0$, then $v_p(x)$ is the position of the first nonzero digit, $\smod_p(x)$ is that digit, and $\ell_p(x)$ is the position of the last nonzero digit.

  \begin{lemma}\label{rem:exsecondacifra}
Let $n,m\in  \mathbb Z\setminus \set 0$ with $n+m\ne 0$ and  $p>\abs n+\abs m$ a standard prime. If  $\alpha\sim\beta$ and $v_p(\alpha)\le v_p(\beta)$, then $v_p(n\alpha+m\beta)=v_p(\alpha)$.
  \end{lemma}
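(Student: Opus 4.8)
The plan is to reduce everything to a computation of first nonzero base-$p$ digits. Write $a\coloneqq v_p(\alpha)$ and $b\coloneqq v_p(\beta)$, so that $a\le b$ by hypothesis, and use transfer to factor $\alpha=p^{a}\alpha'$ and $\beta=p^{b}\beta'$ with $p\nmid\alpha'$ and $p\nmid\beta'$ (here $\alpha',\beta'\in\star\N$ are possibly infinite, and by the definition of $\smod_p$ one has $\smod_p(\alpha)=\alpha'\bmod p$ and $\smod_p(\beta)=\beta'\bmod p$). Then $n\alpha+m\beta=p^{a}\bigl(n\alpha'+m\,p^{\,b-a}\beta'\bigr)$, so, since $v_p$ is additive on products and $v_p(p^{a})=a$, it suffices to show that the second factor is coprime to $p$, i.e.\ that $n\alpha'+m\,p^{\,b-a}\beta'\not\equiv0\pmod p$. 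This will simultaneously guarantee that the factor is nonzero, so that the valuation on the left is defined, and that it equals $a=v_p(\alpha)$.

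I would then split into two cases. If $a<b$, then $p\mid p^{\,b-a}$, so $n\alpha'+m\,p^{\,b-a}\beta'\equiv n\alpha'\pmod p$; since $p>\abs n$ and $n\ne0$ force $p\nmid n$, and $p\nmid\alpha'$ by construction, this is nonzero modulo $p$, and we are done without using the equivalence $\alpha\sim\beta$ at all.

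The crux is the case $a=b$, and this is exactly where $\alpha\sim\beta$ enters. Here the factor is $n\alpha'+m\beta'$, which I would compute modulo $p$ via the first nonzero digits $\smod_p(\alpha)$ and $\smod_p(\beta)$. Since $\smod_p$ maps into the finite standard set $\{1,\dots,p-1\}$, by transfer so does its nonstandard extension; hence $\smod_p(\alpha)$ and $\smod_p(\beta)$ are standard. By \Cref{fact:simprop}(\ref{point:simpushf}), $\alpha\sim\beta$ gives $\smod_p(\alpha)\sim\smod_p(\beta)$, and equivalent standard numbers are equal (the type of a standard number being principal), so $\smod_p(\alpha)=\smod_p(\beta)=:d\in\{1,\dots,p-1\}$. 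As $\alpha'\equiv d$ and $\beta'\equiv d\pmod p$, we obtain $n\alpha'+m\beta'\equiv(n+m)d\pmod p$. Finally $p\nmid d$ since $1\le d\le p-1$, and $p\nmid(n+m)$ because $0<\abs{n+m}\le\abs n+\abs m<p$; hence $(n+m)d\not\equiv0\pmod p$, completing the argument.

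I expect the only real subtlety to be the observation that one cannot simply assume $a=b$: although $v_p(\alpha)\sim v_p(\beta)$ by pushforward, these are in general infinite hypernaturals, and equivalence does not force equality, which is precisely why the hypothesis is stated as $v_p(\alpha)\le v_p(\beta)$ and why the two cases must be handled separately. The hypothesis $p>\abs n+\abs m$ is used in full: the bound $p>\abs n$ disposes of the case $a<b$, while $p>\abs{n+m}$ disposes of the case $a=b$.
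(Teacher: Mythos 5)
Your proof is correct and follows essentially the same route as the paper's: the same case split into $v_p(\alpha)<v_p(\beta)$ (settled by $p\nmid n$ and basic valuation theory) and $v_p(\alpha)=v_p(\beta)$, where the finite image of $\smod_p$ together with $\alpha\sim\beta$ yields $\smod_p(\alpha)=\smod_p(\beta)$ and hence the digit $(n+m)\smod_p(\alpha)\not\equiv 0\pmod p$. Your explicit factorisation $\alpha=p^a\alpha'$, $\beta=p^b\beta'$ merely spells out what the paper compresses into ``basic valuation theory'', and your closing remark correctly identifies why one cannot assume $v_p(\alpha)=v_p(\beta)$ outright.
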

  \begin{proof}
Write $v\coloneqq v_p$ and $\smod\coloneqq\smod_p$. If $v(\alpha)<v(\beta)$ we conclude by basic valuation theory and the fact that $p\nmid n$, so we may assume that $v(\alpha)=v(\beta)$.
Observe that $v(n\alpha+m\beta)\ge v(\alpha)$. Moreover, as $\smod$ has finite image and $\alpha\sim \beta$, we have $\smod(\alpha)=\smod(\beta)$, hence the digit in position $v(\alpha)$ of $n\alpha+m\beta$ is $(n+m)\smod(\alpha)\mod p$, which is not $0$ by the assumptions on $p$ and $n+m$.
  \end{proof}
The main result of this section is the following:

\begin{teo}\label{lemma: RW then pairs are not equivalent}
    Let $(\alpha, \beta)\models \mathrm{RW}$ and $f, g\from \N\to \mathbb Z$. 
    Then, provided that $f(\alpha),g(\alpha)\notin \mathbb Z$, \[(f(\alpha)+f(\beta), \:g(\alpha)+g(\beta))\nsim(f(\alpha),\: g(\beta)).\]
    More generally, for every $k_1, k_2, k_3, k_4\in \Z$, \[(f(\alpha)+f(\beta)+k_1,\; g(\alpha)+g(\beta)+k_2)\nsim(f(\alpha)+k_3, \:g(\beta)+k_4).\]
\end{teo}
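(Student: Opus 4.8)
The plan is to assume for contradiction that the two pairs are equivalent and to extract a contradiction from a single well-chosen odd prime $p$, using $p$-adic valuations and leading digits. First I would carry out two harmless reductions. Since $f(\alpha)$ is infinite, partitioning $\mathbb N$ according to the sign of $f$ (and likewise for $g$) lets me assume, replacing $f$ by $-f$ if needed, that $f,g>0$ on the relevant monads, so that all four quantities are positive and infinite. Next, because $\sim$ is invariant under adding a fixed constant (apply $x\mapsto x\pm c$ and \Cref{fact:simprop}(\ref{point:simpushf})), I can clear the constants $k_1,k_2$ and push everything onto the right-hand pair, reducing to comparing $(f(\alpha)+f(\beta),\,g(\alpha)+g(\beta))$ with $(f(\alpha)+k_3',\,g(\beta)+k_4')$. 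Throughout I would record the consequences of $(\alpha,\beta)\models\mathrm{RW}$: one has $\alpha\sim\beta$, hence $f(\alpha)\sim f(\beta)$ and $g(\alpha)\sim g(\beta)$, and the key \emph{domination principle} \Cref{co:RWbasics}(\ref{point:RW7}): whenever $\psi(\alpha)\neq\psi(\beta)$, every $\phi(\alpha)<\psi(\beta)$.

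The analytic heart is a short computation with \Cref{rem:exsecondacifra}. Fixing an odd prime $p$, domination forces $v_p f(\alpha)\le v_p f(\beta)$ (take $\psi=\phi=v_p\circ f$), so $v_p(f(\alpha)+f(\beta))=v_p f(\alpha)$; and when these two valuations are \emph{equal}, the proof of \Cref{rem:exsecondacifra} (with $n=m=1$) shows the leading digit of the sum is $2\,\smod_p f(\alpha)\bmod p$, which differs from $\smod_p f(\alpha)$ because $p$ is odd and $\smod_p f(\alpha)\neq 0$. The same applies to $g$. Since $\smod_p$ has finite image, two equivalent hypernaturals must share its value, so a disagreement of leading digits immediately yields non-equivalence of a single coordinate.

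The case analysis is then exhaustive at the fixed prime $p$. If $v_p f(\alpha)=v_p f(\beta)$, the first coordinates have leading digits $2\,\smod_p f(\alpha)$ versus $\smod_p f(\alpha)$, hence are inequivalent. Otherwise $v_p f(\alpha)<v_p f(\beta)$, so $f(\alpha)\neq f(\beta)$ and domination gives the strong gap $\phi(\alpha)<f(\beta)$ for all $\phi$. Within this case, if $v_p g(\alpha)=v_p g(\beta)$ the identical $\smod_p$ argument on the \emph{second} coordinate finishes. If instead $v_p g(\alpha)<v_p g(\beta)$, I would use the joint set $A=\{(s,t): v_p(t)<s\}$: on the left $v_p(g(\alpha)+g(\beta))=v_p g(\alpha)\le g(\alpha)<f(\beta)\le f(\alpha)+f(\beta)$, so the left pair lies in $\star A$; on the right $v_p(g(\beta))>f(\alpha)$ by domination, so the right pair avoids $\star A$ — contradicting equivalence. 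This is the clean, unshifted skeleton, and it is where the mechanism of the theorem really lives: $v_p$ extracts the low-order content of the second coordinate, which is $\alpha$-governed (hence small) in the sum but $\beta$-governed (hence dominating) in the bare term.

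The step I expect to be the main obstacle is reinstating the additive constants $k_3',k_4'$ uniformly. The $\smod_p$ and leading-digit comparisons are $p$-adically \emph{low-order}, so they are exactly the data a constant can perturb, and the ``huge valuation'' used in the joint case is destroyed by a constant that is a $p$-adic unit. The plan here is to choose $p$ coprime to $k_3'k_4'$, so that when the relevant valuation is positive one distinguishes coordinates by divisibility by $p$ (a genuinely robust test), and when it vanishes one compares residues mod $p$; the residual danger is a \emph{residue-coincidence corner}, where the constant happens to match the relevant leading digit modulo $p$. Dodging this corner — by varying $p$, or by a supplementary congruence/valuation argument handling the possibility that the pertinent infinite quantity is divisible by many standard primes at once — is the delicate point, and it is the part I expect to require the most care; the unshifted argument above is otherwise self-contained.
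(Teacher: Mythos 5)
Your unshifted core argument is correct, and it is essentially the paper's own proof: the same preliminary reduction to positive infinite values, the same choice of an odd prime, the same use of \Cref{rem:exsecondacifra} (leading digit $2\smod_p$ versus $\smod_p$) to exclude $v_p(f(\alpha))=v_p(f(\beta))$ and, on the second coordinate, $v_p(g(\alpha))=v_p(g(\beta))$, the same appeal to \Cref{co:RWbasics}(\ref{point:RW7}) to get $v_p(g(\beta))>\dcl(\alpha)$, and the same endgame of separating the two pairs by a single joint relation between the coordinates. The only cosmetic difference is your separating set $\set{(s,t)\mid v_p(t)<s}$, where the paper uses $\ell_p(s)>v_p(t)$; both work, since on the left $v_p(g(\alpha)+g(\beta))=v_p(g(\alpha))$ is the value at $\alpha$ of a standard function, hence dominated by $f(\beta)\le f(\alpha)+f(\beta)$, while on the right $v_p(g(\beta))>f(\alpha)$.

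There is, however, a genuine gap: the ``more generally'' clause is left unproved. You defer the constants $k_3',k_4'$ to a speculative supplementary argument (choosing $p$ coprime to the constants, dodging ``residue-coincidence corners'' by varying $p$), and that route really is troublesome, since an additive constant can perturb $v_p$ and $\smod_p$ of a quantity of zero valuation in an uncontrolled way. But the difficulty is an artefact of the order of your reductions, and the paper dispatches it in two lines \emph{before} any $p$-adic analysis. Because $f$ occurs twice on the left pair and once on the right, translating the \emph{pairs} can never absorb a translation of $f$; one must recentre $f$ itself first. Replacing $f$ by $f+(k_1-k_3)$ and $g$ by $g+(k_2-k_4)$ --- legitimate, as the hypothesis $f(\alpha),g(\alpha)\notin\Z$ is invariant under adding standard constants --- makes the constant on each coordinate the same on both sides (namely $2k_3-k_1$ on the first and $2k_4-k_2$ on the second), and then a single pushforward under $(x,y)\mapsto\bigl(x-(2k_3-k_1),\,y-(2k_4-k_2)\bigr)$, using \Cref{fact:simprop}(\ref{point:simpushf}) applied to the equivalence assumed for contradiction, lands exactly in the zero-constant configuration your core argument refutes. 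With this observation your proposal becomes a complete proof; as written, the general statement is not established, and the part you flag as ``the delicate point'' is not where any difficulty lives.
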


\begin{proof}
  Up to replacing $f$ by $-f$, and similarly for $g$, we may assume that $f(\alpha),g(\alpha)>\mathbb N$. By \Cref{fact:simprop}(\ref{point:simpushf}), by replacing $f$ by $f+k_3-k_1$ and $g$ by $g+k_4-k_2$ we may assume that $k_1=k_3$ and $k_2=k_4$. By applying  $(\id-k_1,\id-k_2)$ we then reduce to the case $k_1, k_2, k_3, k_4=0$.

  Suppose that $(f(\alpha)+f(\beta),\: g(\alpha)+g(\beta))\sim(f(\alpha),\; g(\beta))$. Fix a prime $p>2$ and, for brevity, let $v\coloneqq v_p$,  and similarly for $\ell$ and $\smod$.     

  Observe that $v(f(\alpha))\ne v(f(\beta))$; in fact, by assumption  $f(\alpha),f(\beta)\ne 0$ and, if they had different valuation, because $p>2$ and $f(\alpha)\sim f(\beta)$, we would have $\smod(f(\alpha)+f(\beta))\equiv 2\smod(f(\alpha))\pmod p$, contradicting $f(\alpha)+f(\beta)\sim f(\alpha)$.

Thus, since $(\alpha, \beta)\models\mathrm{RW}$, by \Cref{co:RWbasics}(\ref{point:RW7}) we have $v(f(\beta))>\dcl(\alpha)$. Similarly, $v(g(\beta))>\dcl(\alpha)$, which implies $v(g(\alpha)+g(\beta))=v(g(\alpha))$. Now observe that \[\ell(f(\alpha)+f(\beta))\ge v(f(\beta))>\ell(g(\alpha))+1>v(g(\alpha))=v(g(\alpha)+g(\beta)).\]

As the pairs are equivalent, it follows that $\ell(f(\alpha))>v(g(\beta))$, against $v(g(\beta))>\dcl(\alpha)$.
\end{proof}

\Cref{thm: no sums and products RW} can now be deduced from \Cref{lemma: RW then pairs are not equivalent} as follows.

\begin{proof}[Proof of  \Cref{thm: no sums and products RW}]
    Suppose by contradiction this is the case, fix a sufficiently large standard prime $p$ and write $v\coloneqq v_p$ and similarly for $\ell$ and $\smod$.
    By applying the function $(v, \ell)$ we get $(v(\alpha\beta),\; \ell(\alpha\beta))\sim (v(\alpha+\beta),\; \ell(\alpha+\beta))$.    We have \begin{itemize}
        \item $v(\alpha\beta)=v(\alpha)+v(\beta)$.
        \item $\ell(\alpha\beta)=\ell(\alpha)+\ell(\beta)+\varepsilon$ with $\varepsilon\in \{0, 1\}$.
        \item $v(\alpha+\beta)=v(\alpha)$: in fact, $v(\alpha)\le v(\beta)$ by \Cref{co:RWbasics}(\ref{point:RW7}), so we may apply \Cref{rem:exsecondacifra}.
        \item $\ell(\alpha+\beta)=\ell(\beta)+\varepsilon'$ with $\varepsilon'\in\{0, 1\}$.
    \end{itemize}
    Thus $(v(\alpha)+v(\beta),\;\ell(\alpha)+\ell(\beta)+\varepsilon)\sim (v(\alpha),\;\ell(\beta)+\varepsilon')$, and since $\ell(\alpha)>\mathbb N$ \Cref{lemma: RW then pairs are not equivalent} implies that $v(\alpha)\in \mathbb N\cup \set 0$.

The ultrafilter $\tp(\alpha)$ determines, for each power $p^k$, a remainder class modulo $p^k$. In other words, it determines the $p$-adic class $a\in \mathbb Z_p$ of $\alpha$. Therefore, since $\alpha\beta\sim \alpha+\beta$ we obtain $a^2=2a$ in $\mathbb Z_p$. Because we proved above that $v(\alpha)\in \mathbb N\cup \set 0$, it follows that $a=2$.

    The pair $(\gamma, \delta)\coloneqq (\alpha-2,\beta-2)$ is Ramsey by \Cref{co:RWbasics}(\ref{point:RWfrw}), and $\alpha\beta\sim \alpha+\beta$ yields 
    \begin{equation}
      \label{eq:3}
      \gamma\delta+2(\gamma+\delta)\sim \gamma+\delta.
    \end{equation}
    As $\alpha\notin \mathbb N$ we have $\gamma \ne 0$, and since the class of $\gamma$ in $\mathbb Z_p$ is $0$ it follows that $v(\gamma)>\mathbb N$. By \Cref{rem:exsecondacifra} and the fact that $p>2$,
    \[
      v(\gamma\delta)> v(\gamma)=v(\gamma+\delta)=v(2(\gamma+\delta)).
    \]
Therefore, $\smod(\gamma\delta+2(\gamma+\delta))=\smod(2(\gamma+\delta))$, contradicting~\eqref{eq:3}.
\end{proof}

Let us observe that, with the same idea as in the proof of  \Cref{lemma: RW then pairs are not equivalent}, one proves the following.

\begin{rem}\label{lemma:mariaclara}
Let $(\alpha,\beta)\models\mathrm{RW}$. For every $f,g,h,k$, if there are $\phi,\psi$ such that $\phi(g(\beta))\ne \phi(g(\alpha))$ and $\phi(k(\alpha,\beta))\le \psi(h(\alpha,\beta))$, then $(h(\alpha,\beta), k(\alpha,\beta))\nsim (f(\alpha), g(\beta))$.  Otherwise we get $\phi(g(\beta))>\dcl(\alpha)$, and in particular $\phi(g(\beta))>\psi(f(\alpha))$, against $\phi(k(\alpha, \beta))\leq \psi(h(\alpha, \beta))$.
\end{rem}

\Cref{thm: no sums and products RW} says that the formula
\[
  \varphi(x,y,z,t)\coloneqq (x+y=z)\wedge (x\cdot y=t) \wedge x\ne y.
\]
is not Ramsey PR in $(x,y)\mid z,t$. We now look at its other block and Ramsey partition regularity properties.

As known, the partition regularity of $\varphi(x,y,z,t)$ is one of the major open problems in this area of combinatorics. It has been recently solved over $\Q$~\cite{bowenMonochromaticProductsSums2024}; see also \cite{Alweiss2024Monochromatic} for an extension to finite arbitrarily large sums and products. On $\N$, the most general result we are aware of is the following theorem proven by J.~Moreira in~\cite[Corollary~1.5]{moreiraMonochromaticSumsProducts2017}, which we state here using the language of block partition regularity.

\begin{thm}[Moreira]\label{moreira} $\varphi(x,y,z,t)$ is BPR in $x,z,t\mid y$.\end{thm}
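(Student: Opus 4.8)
In the language developed in \Cref{Section: main defn}, the block-partition-regularity analogue of \Cref{fact:fundprop} (the remark following \Cref{def: sep PR}) translates the assertion that $\varphi$ is BPR in $x,z,t\mid y$ into the following purely nonstandard statement: there exist $\alpha,\upsilon\in\star\N$ with $\alpha\ne\upsilon$ such that
\[
  \alpha\sim\alpha+\upsilon\sim\alpha\cdot\upsilon .
\]
Indeed, putting $x=\alpha$ and $y=\upsilon$ forces $z=\alpha+\upsilon$ and $t=\alpha\cdot\upsilon$, and the block $(x,z,t)$ is exactly the requirement that these three hypernaturals share the same type, while $y=\upsilon$ is left free in its own block. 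Thus the theorem is nothing but the partition regularity of the configuration $\{x,\,x+y,\,xy\}$, which is the celebrated result of Moreira. My plan, accordingly, is \emph{not} to reprove it from scratch --- it lies well outside the nonstandard toolkit assembled here, which is geared towards the negative results --- but to record the reduction above and invoke \cite{moreiraMonochromaticSumsProducts2017}.

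For completeness I would outline the strategy of Moreira's argument and indicate where the difficulty lies. First one reduces, by a pigeonhole argument using partition regularity of additive largeness, to locating the configuration inside a single color class that is additively large (e.g.\ piecewise syndetic); this is the ``soft'' part. The core is a recurrence statement for the \emph{affine} semigroup of maps $x\mapsto mx+a$: one establishes a van der Waerden--type multiple recurrence theorem for this non-commutative action and specialises it to produce $x$ and $y$ with $x,\,x+y,\,xy$ in the prescribed class. Topological dynamics --- equivalently, the interplay of the two semigroup structures on $\beta\N$ --- enters precisely to control the interaction of the additive shift $x\mapsto x+y$ with the multiplicative dilation $x\mapsto xy$.

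The main obstacle, and the reason a genuinely new idea is needed, is that the additive and multiplicative structures of $\N$ must be handled \emph{simultaneously}. The usual ultrafilter and nonstandard arguments (such as those behind Schur's or \nameref{teo: Hindman}) exploit the semigroup structure of $\beta\N$ under a single operation, but $(\beta\N,+)$ and $(\beta\N,\cdot)$ do not cohere in a way that yields this pattern directly. This is the same tension responsible for the failure of the \emph{pairwise} version: by \nameref{thm:pwsp} (equivalently \Cref{thm: no sums and products RW}) one cannot place $x+y$ and $xy$ in a common class while letting $(x,y)$ range over $[H]^2$ for an infinite $H$. Hence the positive result here is tight, and it depends essentially on $y$ being allowed to fall in a block separate from $x,\,x+y,\,xy$.
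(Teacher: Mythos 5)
Your proposal is correct and takes essentially the same route as the paper: the paper offers no proof of this statement, since it is precisely Moreira's theorem \cite[Corollary~1.5]{moreiraMonochromaticSumsProducts2017} restated in block-partition-regularity language, and you likewise reduce (correctly, via the nonstandard characterisation: there exist $\alpha\ne\upsilon$ with $\alpha\sim\alpha+\upsilon\sim\alpha\cdot\upsilon$) and then cite Moreira. Your sketch of Moreira's dynamical argument is a reasonable summary but plays no role beyond the citation, exactly as in the paper.
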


Moreira's Theorem, together with some basic considerations, solves most block partition regularity problems for $\varphi(x,y,z,t)$. We list the problems and their status in \Cref{tab:BlockPartitionRegularityOfVarphiXYZT}. We write ``Never considered'' when we are not aware of any work on the problem and, for compactness, we use the following observation.

\begin{rem}\label{rem:redundantBPR}
  We may omit redundant choices of blocks, arising from the symmetry in $x,y$ of the formula $\phi(x,y,z,t)$, or from the fact that variables inside a block can be permuted.
\end{rem}

\begin{table*}
  \centering
  \begin{tabular}{c|c|c}
    \textbf{Blocks} & \textbf{Is it BPR?} & \textbf{Reason}\\
    $x,y,z,t$ & Unknown & Open problem since the 1970's\\
    $x,y,z\mid t$ & Yes & Additive Schur's Theorem\\
    $x,y,t\mid z$ & Yes & Multiplicative Schur's Theorem\\
    $x,z,t\mid y$ & Yes & Moreira's Theorem\\
    $x,y\mid z,t$ & Unknown & Never considered\\			
    $x,z\mid y,t$ & Unknown & Never considered\\
    $x,t\mid y,z$ & Unknown & Never considered\\
    $x,y\mid z\mid t$ & Yes & Trivial\\
    $x,z\mid y\mid t$ & Yes & Follows from $x,z,t\mid y$\\
    $x,t\mid y\mid z$ & Yes & Follows from $x,z,t\mid y$\\
    $z,t\mid x\mid y$ & Yes & Follows from $x,z,t\mid y$\\
    $x\mid y\mid z\mid t$ & Yes & Trivial
  \end{tabular}
  \caption{Block Partition Regularity of $\varphi(x,y,z,t)$.}
  \label{tab:BlockPartitionRegularityOfVarphiXYZT}
\end{table*}

The picture is much richer when it comes to the different kinds of Ramsey partition regularity one might ask for $\varphi(x,y,z,t)$. First, we summarise what happens when variables in the ``Ramsey part'' are in a block of their own. Three such cases need some simple computations to be carried out, which we do in the following lemma.

\begin{lemma}\label{lemma tabellina} The formula $\varphi(x,y,z,t)$ is Ramsey PR in $(x,t)\mid y\mid z$ and $(x,z)\mid y\mid t$, but it is not Ramsey PR in $(z,t)\mid x\mid y$.\end{lemma}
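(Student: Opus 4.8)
The plan is to handle all three cases through the nonstandard characterisation of \Cref{teo: nonstandard first order properties RW}, using the equivalence between \ref{point:RWchardef} and \ref{point:RWchargen}. In each of the three block structures the two variables outside the Ramsey pair sit in their own singleton blocks, and the block $\freccia{z_1}$ attached to the Ramsey pair is empty; thus the equivalence side-conditions in \ref{point:RWchargen} are all vacuous, and after the obvious relabelling of the arguments of $\varphi$ the task reduces, for the two positive cases, to producing a single Ramsey pair $(\alpha,\beta)$ together with suitable hypernatural witnesses for the remaining variables, and, for the negative case, to showing that no such configuration can exist.

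For $(x,z)\mid y\mid t$ I would simply take a tensor square. Choosing a nonprincipal $u\in\beta\N$ and a generator $(\xi,\eta)$ of $u\otimes u$, which is a Ramsey pair by \Cref{thm:tensors witness Ramsey Theorem}, I set $\alpha=\xi$ and $\beta=\eta$ in the role of $(x,z)$, whereupon $y$ and $t$ are forced to be $\gamma_y=\eta-\xi$ and $\gamma_t=\xi(\eta-\xi)$. The two equations $x+y=z$ and $x\cdot y=t$ then hold by construction, $\gamma_y$ is a legitimate (infinite) hypernatural since $\xi<\eta$, and the inequality $x\ne y$ is exactly $\eta\ne 2\xi$, which is immediate from \Cref{co:RWbasics}(\ref{point:RW8}).

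The case $(x,t)\mid y\mid z$ is the only one requiring an idea, and is where I expect the main obstacle. Here the Ramsey pair plays $(x,t)$, so the relation $t=x\cdot y$ forces its second coordinate to be a multiple of its first, a divisibility constraint that a generic Ramsey pair need not satisfy. The plan is to engineer it by passing to powers of $2$: with $(\xi,\eta)$ as above I would set $\alpha=2^\xi$ and $\beta=2^\eta$, still a Ramsey pair by \Cref{co:RWbasics}(\ref{point:RWfrw}) because $2^\xi\ne 2^\eta$. Then $\gamma_y=2^{\eta-\xi}$ gives $\alpha\cdot\gamma_y=2^\eta=\beta$, and $\gamma_z=2^\xi+2^{\eta-\xi}$ gives $x+y=z$; as before $x\ne y$ reduces to $\eta\ne 2\xi$.

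Finally, for the negative statement $(z,t)\mid x\mid y$, I would argue by contradiction. If $\varphi$ were Ramsey PR in this grouping, \Cref{teo: nonstandard first order properties RW} would yield a Ramsey pair $(\alpha,\beta)$ playing $(z,t)$ together with $\gamma_x\ne\gamma_y$ satisfying $\gamma_x+\gamma_y=\alpha$ and $\gamma_x\gamma_y=\beta$. The contradiction is then purely elementary: the arithmetic--geometric mean inequality (i.e.\ $(\gamma_x-\gamma_y)^2\ge 0$) gives $\alpha^2=(\gamma_x+\gamma_y)^2\ge 4\gamma_x\gamma_y=4\beta$, while \Cref{co:RWbasics}(\ref{point:RW8}) applied to the squaring function forces $\alpha^2<\beta$, and these cannot both hold. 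I expect the only genuinely non-routine step in the whole lemma to be spotting the power-of-$2$ trick that resolves the divisibility constraint in the second case.
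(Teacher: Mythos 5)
Your proposal is correct and follows essentially the same route as the paper: the same witnesses $(\alpha,\beta-\alpha,\beta,\alpha(\beta-\alpha))$ for $(x,z)\mid y\mid t$, the same power-of-$2$ construction $(2^\alpha,2^{\beta-\alpha},2^\alpha+2^{\beta-\alpha},2^\beta)$ for $(x,t)\mid y\mid z$, and the same contradiction $\alpha^2\ge\gamma\delta=\beta$ versus \Cref{co:RWbasics}(\ref{point:RW8}) for the negative case (the paper expands $(\gamma+\delta)^2>\gamma\delta$ directly where you invoke AM--GM, a cosmetic difference). Your explicit checks that $x\ne y$ and that $(2^\xi,2^\eta)\models\mathrm{RW}$ via \Cref{co:RWbasics}(\ref{point:RWfrw}) are correct details the paper leaves implicit.
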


\begin{proof} For the Ramsey partition regularity in $(x,t)\mid y\mid z$, fix $(\alpha,\beta)\models\mathrm{RW}$ and let $(x,t)=(2^{\alpha},2^{\beta})$, while $y=2^{\beta-\alpha}$ and $z=2^{\alpha}+2^{\beta-\alpha}$.

For the Ramsey partition regularity in $(x,z)\mid y\mid t$, fix any $(x,z)=(\alpha,\beta)\models\mathrm{RW}$ and let $y=\beta-\alpha$ and $t=\alpha\cdot (\beta-\alpha)$.

For the non Ramsey partition regularity in $(z,t)\mid x\mid y$, assume by contrast that there are $(\alpha,\beta)\models\mathrm{RW}$ and $\gamma,\delta\in\star\N$ such that $\alpha=\gamma+\delta$ and $\beta=\gamma\delta$. Then $\alpha^{2}=(\gamma+\delta)^{2}=\gamma^{2}+2\gamma\delta+\delta^{2}>\gamma\delta=\beta$, against  \Cref{co:RWbasics}(\ref{point:RW8}). \end{proof}

\Cref{tab:RamseyPartitionRegularityOfVarphiXYZT} lists all the cases where the ``Ramsey part'' is in a block of its own, except those excluded by \Cref{rem:redundantBPR} or by the following \Cref{rem:omittedcases}.

\begin{rem}\label{rem:omittedcases}
We may omit cases containing $(z,x)$, $(t,x)$ or $(t,z)$ which, if there were a pair $(\alpha,\beta)\models \mathrm{RW}$ witnessing Ramsey PR, would yield $\alpha+\beta<\alpha$, $\alpha\cdot\beta<\alpha$, and $\alpha\cdot\beta<\alpha+\beta$ respectively.
\end{rem}

\begin{table*}
	\centering
		\begin{tabular}{c|c|c}
			\textbf{Blocks} & \textbf{Is it Ramsey PR?} & \textbf{Reason}\\
			$(x,y)\mid z\mid t$ & Yes & Trivial\\
			$(x,t)\mid y\mid z$ & Yes & \Cref{lemma tabellina}\\
			$(x,z)\mid y\mid t$ & Yes & \Cref{lemma tabellina}\\
			$(z,t)\mid x\mid y$ & No & \Cref{lemma tabellina}\\	
			$(x,y)\mid z,t$ & No & \Cref{thm: no sums and products RW}\\
			$(x,z)\mid y,t$ & Unknown & Never considered\\
			$(x,t)\mid y,z$ & Unknown & Never considered\\
			$(z,t)\mid x,y$ & No &  Follows from $(z,t)\mid x\mid y$
		\end{tabular}
	\caption{Ramsey partition regularity of $\varphi(x,y,z,t)$, case with separate Ramsey block.}
	\label{tab:RamseyPartitionRegularityOfVarphiXYZT}
\end{table*}

Finally, we are left to consider the cases where  variables in the ``Ramsey part'' share a block with other variables. Some such cases can be easily proven by simple computations.

\begin{lemma}\label{lemma tabellina 2} The formula $\varphi(x,y,z,t)$ is Ramsey PR in $(x,z),y\mid t$ and $(x,t),y\mid z$.\end{lemma}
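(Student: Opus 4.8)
The plan is to reduce both statements to the construction of a single Ramsey pair with a special additive property, and then to exhibit explicit witnesses in the spirit of \Cref{lemma tabellina}. Concretely, by \Cref{teo: nonstandard first order properties RW}(\ref{point:RWchargen}) it suffices, for each block structure, to produce a Ramsey pair together with $\sim$-compatible witnesses for $y$ and for the remaining variable. Unwinding \Cref{casino}, in the case $(x,z),y\mid t$ the Ramsey pair plays the role of $(x,z)$, so that $y$ is forced to equal the difference of the two coordinates and must be $\sim$ to the first one; in the case $(x,t),y\mid z$ the same constraint reappears after passing to exponentials. Thus both cases hinge on one ingredient: \emph{a Ramsey pair $(\alpha,\beta)$ with $\beta-\alpha\sim\alpha$}.

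First I would establish this ingredient. One cannot simply take a tensor square $(\alpha,\delta)$ and set $\beta=\alpha+\delta$: the pair $(\alpha,\alpha+\delta)$ need not lie in $\mathrm{RW}$, since if the underlying ultrafilter concentrates on $\{2^n\}$ then the set $\{(a,b)\mid b-a\in\{2^n\}\}$ contains $(\alpha,\alpha+\delta)$ yet no $[H]^2$ (no infinite set has all its differences powers of $2$), so the difference property must be built in more carefully. The clean route is through \nameref{teo: Hindman}. By \Cref{teo: nonstandard first order properties RW}, the existence of the desired pair is equivalent to $x+y=z$ being Ramsey PR in $(x,z),y$, i.e.\ to the statement that for every finite colouring there are an infinite $H$ and a colour $C$ with $H\cup\{h_2-h_1\mid h_1<h_2\in H\}\subseteq C$. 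This is immediate from \nameref{teo: Hindman}: choosing a colour $C$ with $\text{FS}((y_i)_{i\in\mathbb N})\subseteq C$ and taking $H$ to be the partial sums $s_k=\sum_{i\le k}y_i$, both $H$ and all its positive differences $s_m-s_n=y_{n+1}+\dots+y_m$ lie in $\text{FS}((y_i)_{i\in\mathbb N})\subseteq C$. Feeding this back through \Cref{teo: nonstandard first order properties RW}(\ref{point:RWchargen}) yields $(\alpha,\beta)\models\mathrm{RW}$ with $\beta-\alpha\sim\alpha$.

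With the pair in hand, both cases follow by direct verification. For $(x,z),y\mid t$ I would take $(x,z)=(\alpha,\beta)$, $y=\beta-\alpha$ and $t=\alpha(\beta-\alpha)$: then $x+y=\beta=z$, $x\cdot y=t$, and $x\ne y$ because $\alpha\sim\beta-\alpha$ forces $\alpha\ne\beta-\alpha$ (otherwise $\beta=2\alpha\sim\alpha$, contradicting \Cref{fact:simprop}(\ref{point:fideq})); moreover $y=\beta-\alpha\sim\alpha=x$, which is exactly the $\sim$-requirement for the block containing $y$. For $(x,t),y\mid z$ I would instead exponentiate, setting $(x,t)=(2^\alpha,2^\beta)$—a Ramsey pair by \Cref{co:RWbasics}(\ref{point:RWfrw}) since $2^\alpha\ne 2^\beta$—together with $y=2^{\beta-\alpha}$ and $z=2^\alpha+2^{\beta-\alpha}$; here $x\cdot y=2^\beta=t$, $x+y=z$, and $y=2^{\beta-\alpha}\sim 2^\alpha=x$ by \Cref{fact:simprop}(\ref{point:simpushf}). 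The only genuine difficulty is the first step, the construction of a Ramsey pair with $\beta-\alpha\sim\alpha$; once the partial-sums idea identifies \nameref{teo: Hindman} as the right tool, the remaining verifications are routine bookkeeping.
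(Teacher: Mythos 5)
Your proof is correct, but it takes a genuinely different route from the paper's. The paper handles both cases at once from a single tensor triple $(\alpha,\beta,\gamma)$ of infinite equivalent elements: for $(x,z),y\mid t$ it sets $x=\beta-\alpha$, $z=\gamma-\alpha$, $y=\gamma-\beta$, $t=xy$, with $(x,z)\models\mathrm{RW}$ supplied by \Cref{co:intfunrw}; for $(x,t),y\mid z$ it additionally requires $\alpha$ divisible by every $n\in\N$ and uses quotients, $x=\beta/\alpha$, $t=\gamma/\alpha$, $y=\gamma/\beta$, $z=x+y$. Note that your key ingredient---a Ramsey pair $(\alpha,\beta)$ with $\beta-\alpha\sim\alpha$---is exactly what the paper's difference construction produces (and is also how the paper later proves \Cref{pr:ramseydiff}, whose $-x+y=z$ half is literally your ingredient): taking $(\beta-\alpha,\gamma-\alpha)$ with witness $\gamma-\beta$ gives such a pair purely inside the nonstandard framework, via \Cref{co:intfunrw}, without invoking \nameref{teo: Hindman} or the saturation-based direction \Cref{teo: nonstandard first order properties RW}(\ref{point:RWchardef})$\Rightarrow$(\ref{point:RWchargen}) that your partial-sums argument routes through. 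So the two differences are: (i) you obtain the special pair combinatorially (Hindman plus the characterisation theorem) where the paper obtains it model-theoretically (tensor triples plus \Cref{co:intfunrw}); and (ii) you settle the $(x,t),y\mid z$ case by exponentiating the additive solution via $n\mapsto 2^n$ and \Cref{co:RWbasics}(\ref{point:RWfrw}), whereas the paper redoes the construction multiplicatively with a divisible ultrafilter. Your route buys uniformity---both cases reduce to one additive pair, and the exponentiation trick sidesteps the divisibility hypotheses and the implicit check that quotients like $\beta/\alpha$ are hyperintegers---at the cost of heavier inputs (Hindman's Theorem and saturation); the paper's is shorter and stays within its nonstandard toolkit. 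Your side remark that one cannot just take a tensor square $(\alpha,\delta)$ and set $\beta=\alpha+\delta$, witnessed by an ultrafilter concentrating on the powers of $2$ (indeed already no four-element set has all pairwise differences powers of $2$), is correct and correctly motivates why the difference property must be built in from the start.
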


\begin{proof} Let $(\alpha,\beta,\gamma)$ be a tensor triple of infinite equivalent elements.

For the $(x,z),y\mid t$ case we let $x=\beta-\alpha$, $z=\gamma-\alpha$, $y=\gamma-\beta$ and $t=\left(\beta-\alpha\right)\left(\gamma-\beta\right)$.

For the $(x,t),y\mid z$ case we additionally require that  $\alpha$ is divisible by all $n\in\N$, and we let $x=\beta/\alpha, t=\gamma/\alpha, y=\gamma/\beta$ and $z=\left(\beta/\alpha\right)+\left(\gamma/\beta\right)$. \end{proof}

We use \Cref{lemma tabellina 2} to list all remaining cases in \Cref{tab:RamseyPartitionRegularityOfVarphiXYZT2}, again with the omissions arising from \Cref{rem:redundantBPR,rem:omittedcases}.
\begin{table*}[b]
  \centering
  \begin{tabular}{c|c|c}
    \textbf{Blocks} & \textbf{Is it Ramsey PR?} & \textbf{Reason}\\
    $(x,y),z\mid t$ & Yes & Use additive idempotents\\
    $(x,y),t\mid z$ & Yes & Use multiplicative idempotents\\		
    $(x,z),y\mid t$ & Yes & \Cref{lemma tabellina 2}\\
    $(x,z),t\mid y$ & Unknown & Never considered\\		
    $(x,t),y\mid z$ & Yes & \Cref{lemma tabellina 2}\\
    $(x,t),z\mid y$ & Unknown & Never considered\\
    $(z,t),x\mid y$ & No & Follows from $(z,t)\mid x\mid y$\\
    $(x,y),z,t$ & No & Follows from $(x,y)\mid z,t$\\
    $(x,z),y,t$ & Unknown & Stronger than $x,y,z,t$\\
    $(x,t),y,z$ & Unknown & Stronger than $x,y,z,t$\\
    $(z,t),x,y$ & No &  Follows from $(z,t)\mid x\mid y$
  \end{tabular}
  \caption{Ramsey partition regularity of $\varphi(x,y,z,t)$, remaining cases.}
  \label{tab:RamseyPartitionRegularityOfVarphiXYZT2}
\end{table*}

\section{Ramsey PR of equations}\label{pr poly}

In this section we apply the characterisations obtained in \Cref{teo:RWequiv} to  study the Ramsey partition regularity of several equations. We begin with polynomial ones. The two-variable case is trivial, for the following reason.

\begin{fact}\label{fact:twovar}
  If $\alpha\sim \beta$ are infinite, then $P(\alpha,\beta)=0$  if and only if  $\alpha=\beta$ and $P(x,y)$ is a multiple of $(x-y)$.
\end{fact}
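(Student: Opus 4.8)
The plan is to prove both implications, the reverse one being immediate and the forward one carrying all the content. For $\Leftarrow$, if $\alpha=\beta$ and $P(x,y)=(x-y)Q(x,y)$, then $P(\alpha,\beta)=P(\alpha,\alpha)=(\alpha-\alpha)Q(\alpha,\alpha)=0$. For $\Rightarrow$, I assume $P\neq 0$ and $P(\alpha,\beta)=0$ with $\alpha\sim\beta$ infinite, and I would first reduce everything to proving that $\alpha=\beta$. Indeed, once $\alpha=\beta$ is known, the univariate polynomial $g(x)\coloneqq P(x,x)\in\mathbb Z[x]$ satisfies $g(\alpha)=0$ with $\alpha$ infinite; since a nonzero integer polynomial has only standard roots, this forces $g\equiv 0$, i.e.\ $P(x,x)=0$ identically. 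As $P(x,x)$ is the remainder of $P(x,y)$ upon division by $y-x$ in $\mathbb Z[x][y]$, this is equivalent to $(x-y)\mid P(x,y)$, which is the desired conclusion.

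So the crux is to show $\beta=\alpha$. The key idea is that $\beta$ is forced into $\dcl(\alpha)$, after which \Cref{fact:simprop}(\ref{point:fideq}) closes the argument. Writing $P(x,y)=\sum_{j=0}^{d}c_j(x)y^j$ with $c_j\in\mathbb Z[x]$ and $d=\deg_y P$, I would note that at least one $c_j$ is a nonzero polynomial (as $P\neq 0$), and that a nonzero $c_j$ has only standard roots and so stays nonzero at the infinite point $\alpha$. Hence the univariate polynomial $p_\alpha(y)\coloneqq P(\alpha,y)$ over $\star\mathbb Q$ is not identically zero, so it has at most $d$ roots, and $P(\alpha,\beta)=0$ says that $\beta$ is one of them. (In passing this rules out $d=0$, which would give the nonzero constant $P(\alpha,\beta)=c_0(\alpha)\neq 0$.)

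To turn ``$\beta$ is one of finitely many roots'' into membership in $\dcl(\alpha)$, I would define, for each $k\in\mathbb N$, the function $f_k\from\mathbb N\to\mathbb N$ sending $a$ to the $k$-th smallest natural solution $b$ of $P(a,b)=0$ when at least $k$ such $b$ exist, and to $0$ otherwise. By transfer, $f_k(\alpha)$ is the $k$-th smallest hypernatural root of $p_\alpha$. Since $p_\alpha$ has an internal, standardly bounded (by $d$) number of roots, the rank $k_0$ of $\beta$ among them is a standard integer $\leq d$, whence $\beta=f_{k_0}(\alpha)\in\dcl(\alpha)$. Now $\beta\sim\alpha$ gives $f_{k_0}(\alpha)\sim\alpha$, and \Cref{fact:simprop}(\ref{point:fideq}) yields $f_{k_0}(\alpha)=\alpha$, i.e.\ $\beta=\alpha$, completing the reduction.

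The main obstacle, such as it is, lies in the bookkeeping of the previous paragraph: one must check that $p_\alpha$ is genuinely a nonzero polynomial so that its root set is finite and internal, and that the index $k_0$ of $\beta$ is a \emph{standard} (not merely internal) natural number, which is exactly what makes $f_{k_0}$ a legitimate standard function to feed into \Cref{fact:simprop}(\ref{point:fideq}). Everything else is routine; in particular, no ordering of $\alpha,\beta$ and no use of the Ramsey structure is needed, as this is a statement about arbitrary equivalent pairs.
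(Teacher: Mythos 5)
Your proof is correct, but it takes a genuinely different route from the paper's, whose entire proof is a one-line citation: \Cref{fact:fundprop} translates the hypothesis ``$\alpha\sim\beta$ infinite with $P(\alpha,\beta)=0$'' into a partition regularity statement for the equation $P(x,y)=0$, and the external limiting result \cite[Corollary 3.2]{ArrudaLBLimiting24} then rules out all nontrivial cases. You instead work entirely inside $\star\mathbb N$: since $c_d\in\mathbb Z[x]$, the leading $y$-coefficient, is nonzero and hence nonvanishing at the infinite point $\alpha$, the polynomial $P(\alpha,y)$ is nonzero of degree $d$ over the field $\star\mathbb Q$, so $\beta$ is one of at most $d$ roots; its rank $k_0\le d$ is standard, whence $\beta=f_{k_0}(\alpha)\in\dcl(\alpha)$, and \Cref{fact:simprop}(\ref{point:fideq}) forces $\beta=\alpha$, after which $P(x,x)\equiv 0$ and $(x-y)\mid P$ follow by division in $\mathbb Z[x][y]$. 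Your rank functions $f_k$ are precisely the bounded-to-one mechanism the paper isolates in \Cref{pr:bddto1} (indeed your middle step is an instance of it: $b\mapsto$ roots of $P(\cdot,b)$ is at most $d$-valued), so your argument buys self-containedness and independence from the Ramsey-theoretic input, while the citation buys the paper brevity. Two cosmetic points: the forward direction requires $P\ne 0$, which you rightly assume (as the paper does implicitly when applying this in \Cref{rem:no2varRPR}); and since the paper's $\mathbb N$ consists of positive integers, your default value for $f_k$ should be $1$ rather than $0$, or else allow codomain $\mathbb N\cup\set{0}$ as the paper does for $v_p$ and $\ell_m$.
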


\begin{proof}
   By~\cite[Corollary 3.2]{ArrudaLBLimiting24} and \Cref{fact:fundprop}.
\end{proof}

\begin{pr}\label{rem:no2varRPR}
No two-variable polynomial equation is Ramsey PR. 
\end{pr}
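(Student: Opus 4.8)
The plan is as follows. A two-variable polynomial equation $P(x,y)=0$, recast in the framework of \Cref{casino}, must use both of its variables for the Ramsey pair $(x,y)$: there are no variables left over to form the monochromatic blocks $\freccia{z_i}$. Thus the question is simply whether $P(x,y)=0$ is Ramsey PR in $(x,y)$, in the degenerate case $n=0$ (equivalently, with all blocks empty, cf.\ point~3 of the remark following \Cref{casino}). I would first reduce this to a nonstandard statement about the existence of a Ramsey pair solving the equation, and then derive a contradiction from \Cref{fact:twovar} together with the basic properties of Ramsey pairs collected in \Cref{co:RWbasics}.

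For the reduction, I would invoke \Cref{teo: nonstandard first order properties RW}, specifically the equivalence $\ref{point:RWchardef}\Leftrightarrow\ref{point:RWchargen}$, in the degenerate instance where there are no tuples $\freccia{z_i}$: the conditions on the $\freccia{\gamma_i}$ become vacuous, so the characterisation reads that $P(x,y)=0$ is Ramsey PR in $(x,y)$ if and only if there is a Ramsey pair $(\alpha,\beta)\models\mathrm{RW}$ with $P(\alpha,\beta)=0$. Alternatively, one can argue directly from the observation recorded right after \Cref{defin:rw}: the family of $A\subseteq\N^2$ admitting an infinite $H$ with $[H]^2\subseteq A$ is precisely the family of sets whose nonstandard extension $\star A$ contains a Ramsey pair, and in the block-free case Ramsey partition regularity of $P(x,y)=0$ amounts exactly to membership of $\set{(a,b)\mid a<b,\ P(a,b)=0}$ in this family.

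It then suffices to rule out such a pair. Suppose $(\alpha,\beta)\models\mathrm{RW}$ satisfies $P(\alpha,\beta)=0$. By \Cref{co:RWbasics}(\ref{point:RW5}) we have $\alpha\sim\beta$, and by \Cref{co:RWbasics}(\ref{point:RW8}) we have $\alpha<\beta$; in particular $\alpha,\beta$ are infinite and $\alpha\neq\beta$. But \Cref{fact:twovar} asserts that, for infinite $\alpha\sim\beta$, the equality $P(\alpha,\beta)=0$ forces $\alpha=\beta$ (and $(x-y)\mid P$). This contradicts $\alpha<\beta$, so no such pair exists and $P(x,y)=0$ is not Ramsey PR.

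I do not expect a genuine obstacle here: the only point requiring care is the bookkeeping of the degenerate, block-free instance of \Cref{teo: nonstandard first order properties RW}, namely confirming that the characterisation collapses to ``existence of a Ramsey pair satisfying the equation'' when $n=0$. Once this reduction is in place, \Cref{fact:twovar} and the incompatibility of $\alpha\sim\beta$ with $\alpha<\beta$ do all the work. One may further note that the case in which $P$ effectively depends on a single variable is covered as well, since \Cref{fact:twovar} already excludes every infinite solution with $\alpha\neq\beta$, and Ramsey pairs are automatically non-standard and strictly ordered.
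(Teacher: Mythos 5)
Your proof is correct and takes essentially the same route as the paper's: both combine \Cref{fact:twovar} with the facts that any Ramsey pair satisfies $\alpha\sim\beta$ and $\alpha<\beta$ (\Cref{co:RWbasics}) to conclude that no Ramsey pair can solve $P(x,y)=0$. The only difference is that you spell out the degenerate, block-free instance of \Cref{teo: nonstandard first order properties RW} reducing Ramsey partition regularity to the existence of such a pair, a step the paper leaves implicit; your handling of it is correct.
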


\begin{proof}
We use \Cref{fact:twovar}. Since every Ramsey pair $(\alpha,\beta)$ satisfies $\alpha<\beta$ and $\alpha\sim \beta$ (\Cref{co:RWbasics}, points~\ref{point:RW7} and~\ref{point:RW5} respectively), no such pair can satisfy the formula $P(x,y)=0$. 
\end{proof}

We start our discussion by fixing some notations and proving some simple, yet useful, general result.

\begin{defn} For $P(x)\in\Z[x]$, we let $\deg(P)$ be its degree and $c(P)$ be its leading coefficient.\end{defn}

When evaluating polynomials at infinite points, it is natural to consider the following notion:

\begin{defn} Let $\alpha,\beta\in\star{\N}$. We say that $\alpha,\beta$ are in the same \emph{archimedean class}, and write $\alpha\asymp \beta$, if there exists $n\in\N$ such that $(1/n)\leq (\alpha/\beta)\leq n$. We write $\alpha\ll \beta$ if the archimedean class of $\alpha$ is strictly smaller than that of $\beta$, that is, if for all $n\in \mathbb N$ we have $n\cdot \alpha <\beta$.
\end{defn}

A related way to think about $\asymp$ is the following: let us recall that a hyperreal $\rho\in\star{\mathbb{R}}$ is called \emph{finite} if there exists $n\in\N$ such $|\rho|\leq n$. In this case, $\rho$ admits a \emph{standard part}, denoted by $\st(\rho)$ and defined as the unique standard real $r$ whose distance from $\rho$ is \emph{infinitesimal}, i.e.\ smaller than $1/n$ for all $n\in\N$. Saying $\alpha\asymp\beta$ means saying that $\alpha/\beta$ is finite and its standard part is not $0$, while saying that $\alpha\ll \beta$ means that $\st(\alpha/\beta)=0$.

By \Cref{fact:simprop}(\ref{point:fideq}), if $\alpha\sim\beta$ and $q\in\Q$, then $\alpha=q\beta$ forces $q=1$. The first part of next lemma is a simple generalisation of this fact where $=$ is replaced by $\asymp$.

\begin{lemma}\label{tecnico lemma 2} Let $\alpha\sim\beta$ be infinite and  $P,Q\in\Z[x]$  be polynomials of degrees   $\deg(P)=n$ and $\deg(Q)=m$. If $n,m\geq 1$, then:
\begin{enumerate}
\item\label{point:teclem1} if $\alpha\asymp\beta$ then $\st\left({\alpha}/{\beta}\right)=1$;
\item\label{point:teclem2} if $P(\alpha)\asymp Q(\beta)$ then $n=m$;
\item if $P(\alpha)\sim Q(\beta)$ then $P=Q$.
\end{enumerate}
\end{lemma}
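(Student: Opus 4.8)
The plan is to prove the three parts in order, since part~(\ref{point:teclem1}) is the engine driving the other two. For part~(\ref{point:teclem1}), assume $\alpha\asymp\beta$ and set $r\coloneqq\st(\alpha/\beta)$, which is finite and nonzero; suppose towards a contradiction that $r\neq 1$ and, swapping $\alpha$ and $\beta$ if necessary, that $r>1$. I would pick a rational $q$ in the nonempty interval $(\sqrt r,r)$, so that $1<q<r<q^2$, and consider $g\coloneqq\floor{\log_q(\cdot)}\from\N\to\N$. Since $\alpha/\beta$ is infinitely close to $r\in(q,q^2)$, the quantity $\log_q(\alpha/\beta)$ lies strictly between $1$ and $2$, and a one-line floor computation then gives $g(\alpha)-g(\beta)=\floor{\log_q\alpha}-\floor{\log_q\beta}\in\set{1,2}$. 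On the other hand $g(\alpha)\sim g(\beta)$ by pushforward (\Cref{fact:simprop}, clause~\ref{point:simpushf}), so from $g(\beta)<g(\alpha)$ the final clause of \Cref{fact:simprop} forces $g(\alpha)-g(\beta)$ to be \emph{infinite}, a contradiction. Hence $r=1$.

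For part~(\ref{point:teclem2}) I would reduce to part~(\ref{point:teclem1}) via logarithms. After replacing $P,Q$ by $-P,-Q$ if needed so that $P(\alpha),Q(\beta)$ are positive, write $\ell\coloneqq\floor{\log_2(\cdot)}$. The hypothesis $P(\alpha)\asymp Q(\beta)$ makes $\log_2 P(\alpha)-\log_2 Q(\beta)$ finite; since $P(\alpha)=c(P)\,\alpha^{n}(1+\eta)$ with $\eta$ infinitesimal and similarly for $Q$, this says $n\log_2\alpha-m\log_2\beta$ is finite, whence $n\,\ell(\alpha)-m\,\ell(\beta)$ is finite too (the fractional parts contribute a bounded error). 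As $\ell(\beta)$ is infinite, dividing by $n\,\ell(\beta)$ yields $\st(\ell(\alpha)/\ell(\beta))=m/n$, a finite nonzero value, so $\ell(\alpha)\asymp\ell(\beta)$. But $\ell(\alpha)\sim\ell(\beta)$, so part~(\ref{point:teclem1}) forces $m/n=1$, i.e.\ $n=m$.

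The third part I would obtain almost for free from \Cref{fact:twovar}, bypassing parts~(\ref{point:teclem1})--(\ref{point:teclem2}) entirely. By the clause of \Cref{fact:simprop} characterising $f(\alpha)\sim g(\beta)$ (namely, that this holds iff $f(\alpha)=g(\beta')$ for some $\beta'\sim\beta$), the hypothesis $P(\alpha)\sim Q(\beta)$ produces some $\beta'\sim\beta$ with $P(\alpha)=Q(\beta')$. Since $\alpha\sim\beta\sim\beta'$ are infinite, I would apply \Cref{fact:twovar} to the two-variable polynomial $S(x,y)\coloneqq P(x)-Q(y)$, which vanishes at $(\alpha,\beta')$; this shows that $(x-y)$ divides $S(x,y)$. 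Evaluating at $x=y$ then gives $P(y)-Q(y)=0$ identically, that is $P=Q$.

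The main obstacle is part~(\ref{point:teclem1}): the delicate point is that $g(\alpha)\sim g(\beta)$ with $g(\beta)<g(\alpha)$ is \emph{not} by itself contradictory, so one must engineer the scale $q$ so that the gap $g(\alpha)-g(\beta)$ is simultaneously strictly positive and bounded, and only then invoke the dichotomy ``equivalent and strictly ordered hypernaturals differ by an infinite amount''. Once part~(\ref{point:teclem1}) is in hand, part~(\ref{point:teclem2}) is a routine logarithmic reduction, and the third part is immediate from \Cref{fact:twovar}.
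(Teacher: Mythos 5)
Your proof is correct, and all three parts take genuinely different routes from the paper's. For part~(1), the paper writes $\alpha,\beta$ in base $p$, notes $\ell_p(\alpha)=\ell_p(\beta)$, and uses \emph{overspill} to extract an infinitely long common prefix $\gamma$ of the two digit expansions, so that $\alpha=\gamma+\varepsilon$ and $\beta=\gamma+\varepsilon'$ with $\varepsilon,\varepsilon'\ll\gamma$; your argument instead engineers a rational scale $q$ with $q<r<q^2$ so that $\floor{\log_q\alpha}-\floor{\log_q\beta}$ is simultaneously positive and finite, contradicting the last clause of \Cref{fact:simprop} --- you correctly isolate this as the delicate point, and your version is shorter and avoids overspill, while the paper's yields slightly more information (the common prefix $\gamma$). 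For part~(2), the paper derives $n\ell(\alpha)+k=m\ell(\beta)$ for a finite $k$ and then invokes \emph{Rado's theorem} (since $\ell(\alpha)\sim\ell(\beta)$, the equation $nx-my=k$ would be PR, forcing $n=m$), whereas you divide by $\ell(\beta)$ to get $\st(\ell(\alpha)/\ell(\beta))=m/n$ and feed the pair $(\ell(\alpha),\ell(\beta))$ back into part~(1); your route is more self-contained, the paper's outsources the arithmetic to Rado. For part~(3), the paper deduces $P(\alpha)\sim Q(\alpha)$ and applies \Cref{pr:bddto1} (polynomials are bounded-to-one) to get $P(\alpha)=Q(\alpha)$ at an infinite point, while you go through clause~(3) of \Cref{fact:simprop} and \Cref{fact:twovar} applied to $S(x,y)=P(x)-Q(y)$; both ingredients are available in the paper and both arguments are equally short. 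One cosmetic caveat: \Cref{fact:simprop}, \Cref{pr:bddto1} and \Cref{fact:twovar} are stated for $\mathbb N$-valued functions, while your $P$, $Q$ and $S$ are $\Z$-valued; this is harmless (compose with a bijection $\Z\to\N$, or note that the relevant values have constant sign), and the paper's own proof of part~(3) commits the same abuse.
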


\begin{proof}Fix an arbitrary finite prime $p$ and let $\ell\coloneqq\ell_p$.
  \begin{enumerate}[wide=0pt, labelwidth=!, labelindent=0pt]
  \item  Write $\alpha,\beta$ in base $p$. In the notations of our previous section, $\ell(\alpha)\sim \ell(\beta)$. Hence either $\ell(\alpha)=\ell(\beta)$ or the value $|\ell(\alpha)-\ell(\beta)|$ is infinite, and the latter is impossible as in this case $\alpha/\beta$ would either be infinite or infinitesimal, against our hypothesis. Let $\lambda=\ell(\alpha)=\ell(\beta)$. As $\alpha\sim\beta$, for all finite $i$ the coefficients of $p^{\lambda-i}$ in the base $p$ expansions of $\alpha$ and $\beta$ coincide; then, by overspill, there exists an infinite $\tau$ such that for every $i\leq \tau$ the $(\lambda-i)$-th coefficient $c_{\lambda-i}$ in the base $p$-expansions of $\alpha$ and $\beta$ is the same. Let $\gamma=\sum_{i=0}^{\tau}c_{\lambda-i}p^{\lambda-i}$. Then:
    \begin{itemize}
    \item $\alpha=\gamma +\varepsilon$, with $\varepsilon\ll \gamma$;
    \item $\beta=\gamma+\varepsilon^{\prime}$, with $\varepsilon^{\prime}\ll \gamma$.
    \end{itemize}
    Hence $\alpha/\beta=(\gamma+\varepsilon)/(\gamma+\varepsilon^{\prime})$
has standard part $1$.

  \item  First of all, we observe that 
\[\alpha^{n}\asymp P(\alpha)\asymp Q(\beta)\asymp \beta^{m},\]
so $\alpha^{n}\asymp \beta^{m}$.  Therefore,  $\ell(\alpha^n)$ is at finite distance from $\ell(\beta^m)$, so there is a finite $k\in \mathbb Z$ such that  $n\ell(\alpha)+k=m\ell(\beta)$. As $\ell(\alpha)\sim\ell(\beta)$, if this happens then the equation $nx-my=k$ is PR. By Rado's Theorem, this can happen only when $n=m$ (and $k=0$).

\item The assumptions imply $P(\alpha)\sim Q(\alpha)$. As polynomials are bounded-to-one, by \Cref{pr:bddto1} we must have $P(\alpha)=Q(\alpha)$. So $P(\alpha)-Q(\alpha)=0$, and since $\alpha$ is infinite we must have $P=Q$.\qedhere
\end{enumerate}
\end{proof}

\Cref{tecnico lemma 2} allows some simple, yet interesting, observations. For instance, we may combine it  with Rado's Theorem and van der Waerden's Theorem to obtain the following propositions.

\begin{pr}\label{pr:asymptwitness}
Let $a,b,c\in\N$.
\begin{enumerate}
\item If $a=c\ne b$, then for all finite partitions $\N=C_{1}\cup\ldots\cup C_{r}$ and all $N\in\N$, there exist $i\leq r$ and $x,y,z\in C_{i}$ with $x>Ny$ and such that $ax+by=cz$. Moreover, the analogous result with $x>Ny$ replaced by $y>Nx$ is false.
\item If $a+b=c$  then for all finite partitions $\N=C_{1}\cup\ldots\cup C_{r}$ and all $\epsilon>0$ there are $i\le r$ and $x,y,z\in C_{i}$ with $\abs{(x/y)-1},\abs{(x/z)-1}<\epsilon$ and such that $ax+by=cz$.
Moreover, for all $N>1$, there is a finite partition $\N=C_{1}\cup\ldots\cup C_{r}$ such that for all $i\le r$ and all $x,y,z\in C_i$ with $x> Ny$ we have $ax+by\ne cz$. Similarly with $x,y$ replaced, in $x>Ny$, by any two distinct elements of $\set{x,y,z}$.
\end{enumerate}
\end{pr}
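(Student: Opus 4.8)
The plan is to push both statements through \Cref{fact:fundprop}, so that everything reduces to understanding which triples $\alpha\sim\beta\sim\gamma$ can solve the equation $ax+by=cz$, and then to let \Cref{tecnico lemma 2}(\ref{point:teclem1}) pin down their archimedean behaviour. The existence half is cheap: in both cases the relevant subset of the coefficients sums to zero (namely $\set{a,-c}$ in part~(1), since $a=c$, and $\set{a,b,-c}$ in part~(2), since $a+b=c$), so Rado's Theorem gives partition regularity of the bare equation, and \Cref{fact:fundprop} then produces an equivalent triple $\alpha\sim\beta\sim\gamma$ with $a\alpha+b\beta=c\gamma$. The key structural remark is that such an equivalent triple is either constant, i.e.\ $\alpha=\beta=\gamma\in\N$ (all three realise the same principal type), or else consists of infinite elements, in which case \Cref{tecnico lemma 2}(\ref{point:teclem1}) applies to each of its pairs.

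For part~(1) I would first note that there is no constant solution, since $a\alpha+b\alpha=a\alpha$ forces $b\alpha=0$; hence the triple produced above is infinite. Writing $\gamma=\alpha+(b/a)\beta$, I rule out two archimedean regimes: if $\beta\gg\alpha$ then $\gamma\asymp\beta$ with $\st(\gamma/\beta)=b/a$, which \Cref{tecnico lemma 2}(\ref{point:teclem1}) forces to be $1$, giving $a=b$; and if $\alpha\asymp\beta$ then $\st(\beta/\alpha)=1$, whence $\gamma\asymp\alpha$ with $\st(\gamma/\alpha)=1+b/a\neq1$, again a contradiction. Thus $\beta\ll\alpha$ for \emph{every} such triple. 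The positive assertion follows at once: for each $N$ the triple satisfies $(ax+by=cz)\wedge(x>Ny)$, so this formula is PR by \Cref{fact:fundprop}. For the negative assertion, observe that no equivalent triple satisfies $y>x$ — the constant ones have $\alpha=\beta$, the infinite ones have $\beta\ll\alpha$ — so $(ax+by=cz)\wedge(y>Nx)$ is not PR for any $N\ge 1$, exhibiting a colouring with no monochromatic solution of that shape.

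Part~(2) is the diagonal mirror image. The same case analysis, now with $c=a+b$, shows that $\beta\gg\alpha$ gives $\st(\gamma/\beta)=b/c\neq1$ and $\beta\ll\alpha$ gives $\st(\gamma/\alpha)=a/c\neq1$, both impossible since $a,b>0$; so any infinite solution has $\alpha\asymp\beta$ with ratio $1$, and then $\st(\gamma/\alpha)=(a+b)/c=1$ as well, i.e.\ all three pairwise ratios have standard part $1$. The positive statement is then immediate (indeed the diagonal $x=y=z$ already works, and any infinite equivalent solution has $\abs{\alpha/\beta-1}$, $\abs{\alpha/\gamma-1}$ infinitesimal, hence below any standard $\epsilon$), giving partition regularity of $(ax+by=cz)\wedge\abs{x/y-1}<\epsilon\wedge\abs{x/z-1}<\epsilon$ through \Cref{fact:fundprop}. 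For the negative statement, for $N>1$ no equivalent triple can satisfy $x>Ny$ — constant triples have ratio $1$ and infinite ones have ratio with standard part $1$ — and the same holds for every ordered pair of distinct variables by the symmetry of the argument; hence each such shifted formula fails to be PR, yielding the required colouring.

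The main point to get right is that the rigidity, not any construction, is doing the work. A tempting but unnecessary detour is to try to \emph{build} a scale-separated triple by hand — for instance realising $\alpha\sim\beta$ with $\beta\ll\alpha$ by saturation and then correcting $\gamma$ — which runs into an awkward clash between the multiplicative coefficients $a,b$ and the additive equivalence $\sim$ (one is pushed towards ultrafilters that are simultaneously additively idempotent and invariant under multiplication, whose existence is delicate). The observation that dissolves this is that one need not construct anything: every equivalent solution handed over by Rado's Theorem is \emph{automatically} forced into the right regime by \Cref{tecnico lemma 2}(\ref{point:teclem1}). The only care points that remain are the clean split between constant and infinite equivalent triples (so that the lemma is applied only to infinite elements) and checking that the archimedean bookkeeping is symmetric enough to cover all ordered pairs of distinct variables in the negative statements.
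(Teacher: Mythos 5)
Your proposal is correct and follows essentially the same route as the paper's proof: Rado's Theorem plus the nonstandard characterisation of partition regularity to obtain an equivalent triple, then \Cref{tecnico lemma 2}(\ref{point:teclem1}) to force $\beta\ll\alpha$ in case~(1) and $\alpha\asymp\beta\asymp\gamma$ with all standard-part ratios equal to $1$ in case~(2), from which both the positive and negative assertions follow. You merely spell out details the paper leaves implicit (the constant-versus-infinite dichotomy for equivalent triples, the explicit ratio computations, and the triviality of the diagonal solution in the positive half of~(2)).
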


\begin{proof}
   By Rado's Theorem,  the equation $ax+by=cz$ is PR if and only if $a=c$, or $b=c$, or $a+b=c$. In nonstandard terms, this equation is PR if and only if there exist $\alpha\sim\beta\sim\gamma$ such that $a\alpha+b\beta=c\gamma$. Fix such $\alpha,\beta,\gamma$.
 \begin{enumerate}[wide=0pt]
 \item It suffices to show that $\beta\ll\alpha$. If $\alpha\asymp \beta$ then $a\alpha+b\beta\asymp (a+b)\alpha$, so $c\gamma\asymp (a+b)\alpha$, hence by \Cref{tecnico lemma 2}(\ref{point:teclem1}) we should have $a+b=c$, against the assumption. If $\alpha\ll \beta$, it similarly follows that $b=c$.  
 \item It suffices to show that $\st(\alpha/\beta)=\st(\beta/\gamma)=1$, and by \Cref{tecnico lemma 2}(\ref{point:teclem1}) it is enough to prove $\alpha\asymp \beta\asymp \gamma$. This follows easily from the equation $a\alpha+b\beta=(a+b)\gamma$ and again  \Cref{tecnico lemma 2}(\ref{point:teclem1}).\qedhere
 \end{enumerate}
\end{proof}

\begin{pr}
 For every $N\in\N$, every $k>1$, and every finite partition $\N=C_{1}\cup\ldots\cup C_{r}$, there exist $i\leq r$ and $x,y\in \N$ such that $x>Ny$ and for all $0\le j\leq k$ we have  $x+jy\in C_{i}$. Moreover, the analogous result with $x>Ny$ replaced by $y>Nx$ is false.  
\end{pr}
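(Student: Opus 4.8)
The plan is to reduce both assertions to the nonstandard criterion \Cref{fact:fundprop} together with the archimedean estimate in \Cref{tecnico lemma 2}(\ref{point:teclem1}). Everything rests on one elementary computation about an equivalent three-term progression. Suppose $\alpha_0\sim\alpha_1\sim\alpha_2$ are infinite and form an arithmetic progression with common difference $\delta=\alpha_1-\alpha_0>0$. Then $\alpha_1\asymp\alpha_2$ automatically, since $\alpha_2/\alpha_1=1+\delta/(\alpha_0+\delta)\in(1,2)$ for positive $\alpha_0,\delta$; hence \Cref{tecnico lemma 2}(\ref{point:teclem1}) forces $\st(\alpha_1/\alpha_2)=1$. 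Writing $\alpha_1/\alpha_2=1-\delta/(\alpha_0+2\delta)=1-1/(\alpha_0/\delta+2)$, this equality is equivalent to $\alpha_0/\delta$ being infinite, i.e.\ to $\delta\ll\alpha_0$. In words: \emph{in any equivalent progression the common difference is infinitely smaller than the initial term}.

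For the positive part I would fix $k\ge 2$ and apply van der Waerden's Theorem and \Cref{fact:fundprop} to obtain $\alpha_0\sim\dots\sim\alpha_k$ forming a genuine $(k+1)$-term progression. Up to reversing the progression, its common difference $\delta$ is positive, and since the terms are distinct and equivalent they are nonstandard, so $\delta$ is infinite (\Cref{fact:simprop}). Applying the computation above to $\alpha_0,\alpha_1,\alpha_2$ yields $\delta\ll\alpha_0$, hence $\alpha_0>N\delta$ for \emph{every} standard $N$. Thus the single tuple $(\alpha_0,\dots,\alpha_k)$ satisfies, for each $N$, the formula stating that $x_0,\dots,x_k$ form an arithmetic progression with $x_0>N(x_1-x_0)$; by \Cref{fact:fundprop} each such formula is partition regular, which upon unwinding the definitions is precisely the existence, in every finite colouring, of a monochromatic progression $x,x+y,\dots,x+ky$ with $x>Ny$.

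For the negative part I would run the same identity in reverse. If the variant with $y>Nx$ were true, then already for $k=2$ and, say, $N=1$, \Cref{fact:fundprop} would supply equivalent $\alpha_0\sim\alpha_1\sim\alpha_2$ in arithmetic progression with $\delta>\alpha_0$. But the crux computation applies verbatim and forces $\delta\ll\alpha_0$, contradicting $\delta>\alpha_0$. Hence the corresponding formula is not partition regular, and \Cref{fact:fundprop} furnishes a finite colouring of $\N$ admitting no monochromatic progression with $y>Nx$, which is the desired counterexample.

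The conceptual content is entirely in the one-line estimate $\st(\alpha_1/\alpha_2)=1\Rightarrow\delta\ll\alpha_0$, which is immediate once one notes that consecutive terms of a progression are trivially in the same archimedean class. The only points requiring care are routine bookkeeping: checking that van der Waerden's progression may be taken with positive, hence infinite, common difference, and observing that a single nonstandard witness discharges all values of $N$ at once in the positive direction. I do not expect any genuine obstacle beyond this.
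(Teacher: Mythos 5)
Your proof is correct and takes essentially the same route as the paper's: the paper likewise extracts from van der Waerden's Theorem a nonstandard arithmetic progression of equivalent elements and deduces that the common difference satisfies $\beta\ll\alpha$ via \Cref{tecnico lemma 2}(\ref{point:teclem1}) (arguing as in \Cref{pr:asymptwitness}), from which both the positive and negative halves follow. Your direct computation of $\st(\alpha_1/\alpha_2)=1$ merely replaces the paper's trichotomy on archimedean classes, and your explicit treatment of the $y>Nx$ direction spells out what the paper leaves as ``the conclusion follows easily.''
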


\begin{proof}
By the nonstandard characterisation of van der Waerden's Theorem there exists $\alpha,\beta$ such that for all $n\in\N$ we have $\alpha\sim \alpha+n\beta$. Arguing as in the proof of \Cref{pr:asymptwitness} we deduce that $\beta\ll \alpha$. From this, the conclusion follows easily.
\end{proof}

The last technical result that we will need is the following:

\begin{lemma}\label{tecnico lemma} Let $P(x),Q(x)\in \Z[x]$ be such that $\deg(P)=\deg(Q)=d>1$. Assume that $c(P),c(Q)>0$ and let $c\coloneqq {c(P)}/{c(Q)}$. For all $\beta\in\star{\N}\setminus\N$ there exists $n\in\N$ such that $Q(\sqrt[d]{c}\beta-n)\leq P(\beta) \leq Q(\sqrt[d]{c}\beta+n)$. \end{lemma}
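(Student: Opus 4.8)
The plan is to compare the two internal quantities $P(\beta)$ and $Q(\sqrt[d]{c}\,\beta)$ by expanding them as hyperreal polynomials in $\beta$ and exploiting the fact that their leading ($\beta^{d}$) terms agree, so that the relevant behaviour is controlled at order $\beta^{d-1}$. Writing $P(x)=c(P)x^{d}+p_{d-1}x^{d-1}+\dots$ and $Q(x)=c(Q)x^{d}+q_{d-1}x^{d-1}+\dots$, the choice $c=c(P)/c(Q)$ is engineered precisely so that the $\beta^{d}$ coefficient of $Q(\sqrt[d]{c}\,\beta)$, namely $c(Q)(\sqrt[d]{c})^{d}=c(Q)c=c(P)$, equals that of $P(\beta)$. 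Thus the two top-degree terms cancel in any difference, and everything is decided one degree down.

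First I would compute the coefficient of $\beta^{d-1}$ in $Q(\sqrt[d]{c}\,\beta\pm n)$ for a standard $n$. Expanding and collecting the two relevant powers, this coefficient equals $\pm nK+q_{d-1}c^{(d-1)/d}$, where $K\coloneqq d\,c(Q)\,c^{(d-1)/d}>0$; the contribution $\pm nK$ comes from the binomial term $\binom{d}{d-1}(\sqrt[d]{c}\,\beta)^{d-1}(\pm n)$ inside $(\sqrt[d]{c}\,\beta\pm n)^{d}$, while $q_{d-1}c^{(d-1)/d}$ is the top-order part of the subleading coefficient of $Q$. Hence, setting $L\coloneqq p_{d-1}-q_{d-1}c^{(d-1)/d}$, I obtain
\[
  Q(\sqrt[d]{c}\,\beta+n)-P(\beta)=(nK-L)\,\beta^{d-1}+r_{+},\qquad
  Q(\sqrt[d]{c}\,\beta-n)-P(\beta)=(-nK-L)\,\beta^{d-1}+r_{-},
\]
where each remainder $r_{\pm}$ is a fixed hyperreal polynomial in $\beta$ of degree at most $d-2$, so $r_{\pm}\ll\beta^{d-1}$.

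The key observation is that $K$ and $L$ are \emph{standard} reals with $K>0$, both depending only on $P,Q$ and not on $\beta$. Choosing any standard $n\in\N$ with $n>|L|/K$ makes the coefficients $nK-L$ and $-nK-L$ strictly positive and strictly negative respectively. Since an expression of the form $a\,\beta^{d-1}+r$ with $a$ a nonzero standard real and $r\ll\beta^{d-1}$ has the sign of $a$ (because $a\,\beta^{d-1}\asymp\beta^{d-1}$ dominates the strictly smaller archimedean term $r$), I conclude $Q(\sqrt[d]{c}\,\beta+n)>P(\beta)$ and $Q(\sqrt[d]{c}\,\beta-n)<P(\beta)$, which is exactly the desired chain of inequalities.

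The step requiring the most care is the bookkeeping at order $\beta^{d-1}$: one must correctly isolate the $\beta^{d-1}$ coefficient of $Q(\sqrt[d]{c}\,\beta\pm n)$, tracking both the shift $\pm n$ coming through the top-degree term and the separate contribution of $q_{d-1}$, and then justify, in $\star{\mathbb{R}}$, that a genuinely standard and nonzero leading coefficient forces the sign of the whole difference. The latter is immediate since $\beta^{j}\ll\beta^{d-1}$ for every $j\le d-2$ and there are only finitely many such terms, so the remainder cannot overturn a standard positive leading term. No $\beta$-dependent constant enters, so in fact a single $n$ works uniformly for all infinite $\beta$, which is stronger than the statement requires.
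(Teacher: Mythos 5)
Your proof is correct and takes essentially the same route as the paper: both cancel the degree-$d$ terms via the choice of $c$, identify the coefficient of $\beta^{d-1}$ in $Q(\sqrt[d]{c}\,\beta\pm n)$ as $(c'(Q)\pm dn\,c(Q))\,c^{\frac{d-1}{d}}$, and take a standard $n$ large enough that these bracket the corresponding coefficient of $P$, with the lower-order terms negligible because $\beta$ is infinite. The only difference is cosmetic: you make explicit the domination argument $r_{\pm}\ll\beta^{d-1}$ that the paper leaves implicit in its reduction to comparing $x^{d-1}$ coefficients.
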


\begin{proof} Let $c'(P), c'(Q)$ be the coefficients of $x^{d-1}$ in $P,Q$ respectively. Observe that, as $\beta$ is infinite, our hypotheses on $P,Q$ give us that $Q(\sqrt[d]{c}x-n), P(x), Q(\sqrt[d]{c}x+n)$ all have the same leading monomial, hence 
  the inequality in the conclusion holds if and only if the same inequality holds for the coefficients of $x^{d-1}$ in $Q(\sqrt[d]{c}x-n), P(x), Q(\sqrt[d]{c}x+n)$ respectively. This coefficient is $(c'(Q)-dn \cdot c(Q))c^\frac{d-1}d$ in $Q(\sqrt[d]{c}x-n)$ and it is $(c'(Q)+dn\cdot c(Q))c^\frac{d-1}d$ in $Q(\sqrt[d]{c}x+n)$. To conclude, it suffices to find some $n$ such that 
  \[
    (c'(Q)-dn\cdot c(Q))c^\frac{d-1}d\leq c'(P)\leq (c'(Q)+dn\cdot c(Q))c^\frac{d-1}d.
  \]
Because $d\cdot c(Q)$ and $c$ are positive, it is enough to take $n$  sufficiently large.
\end{proof}

We are now ready to discuss the Ramsey partition regularity of polynomial equations in three variables. Our first result is very general, as it involves arbitrary functions in $x$.

\begin{teo}\label{teo: poly bounds RW} Let $P(x),Q(x)\in \Z[x]$ be nonzero polynomials with $P(0)=Q(0)=0$. Let $f\from \mathbb N\to \mathbb Z$  and let $\varphi(x,y,z)\coloneqq  (f(x)+P(y)=Q(z))\land (f(x)\ne 0)$.
If $\varphi(x,y,z)$ is Ramsey PR in $(x,y),z$ then $P=Q$ and $\deg(P)=1$.\end{teo}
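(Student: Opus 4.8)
The plan is to pass to the nonstandard side via \Cref{teo: nonstandard first order properties RW} and then run an archimedean-class analysis. Since $\varphi(x,y,z)$ carries the single block $(x,y),z$ (so $n=1$ and $\freccia{z_1}=z$ in the notation of \Cref{casino}), point~\ref{point:RWchargen} of \Cref{teo: nonstandard first order properties RW} turns Ramsey PR into the existence of a Ramsey pair $(\alpha,\beta)$ and some $\gamma\sim\alpha$ with $f(\alpha)+P(\beta)=Q(\gamma)$ and $f(\alpha)\ne 0$. Combined with \Cref{co:RWbasics}(\ref{point:RW5}) this gives $\alpha\sim\beta\sim\gamma$, while \Cref{co:RWbasics}(\ref{point:RW8}), applied to the functions $x\mapsto k\,\abs{f(x)}+1$, yields $\abs{f(\alpha)}\ll\beta$. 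Thus the perturbation $f(\alpha)$ is archimedean-negligible compared with $\beta$, which is the engine of the whole argument.

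Next I would establish the coarse structure. As $P$ is nonzero with $P(0)=0$ we have $\deg P\ge 1$, so $P(\beta)$ is infinite of archimedean class $\beta^{\deg P}$; since $\abs{f(\alpha)}\ll\beta$, this forces $Q(\gamma)=P(\beta)+f(\alpha)\asymp P(\beta)$, indeed $Q(\gamma)/P(\beta)\to 1$. Comparing leading terms shows the leading coefficients of $P$ and $Q$ have the same sign, so after replacing $(P,Q,f)$ by $(-P,-Q,-f)$ if needed we may assume $c(P),c(Q)>0$. Now \Cref{tecnico lemma 2}(\ref{point:teclem2}), applied to $\beta\sim\gamma$ and $P(\beta)\asymp Q(\gamma)$, gives $\deg P=\deg Q=:d$; comparing leading terms in $Q(\gamma)/P(\beta)\to 1$ then gives $(\gamma/\beta)^d\to c(P)/c(Q)$, a finite nonzero limit, so $\gamma\asymp\beta$, and \Cref{tecnico lemma 2}(\ref{point:teclem1}) forces $\st(\gamma/\beta)=1$, whence $c(P)=c(Q)$.

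With $\deg P=\deg Q=d$ and equal leading coefficients in hand, the crux is to rule out $d\ge 2$, and here I would invoke the sandwiching \Cref{tecnico lemma}. Since $c=c(P)/c(Q)=1$, it provides $n\in\N$ with $Q(\beta-n)\le P(\beta)\le Q(\beta+n)$. When $d\ge 2$, consecutive values of $Q$ near $\beta$ differ by a quantity of archimedean class $\beta^{d-1}$, which dominates $\beta$ and hence $\abs{f(\alpha)}$; so adding $f(\alpha)$ to $P(\beta)$ cannot cross one such step, and the eventual strict monotonicity of $Q$ (as $c(Q)>0$) pins $\gamma$ to within a finite distance of $\beta$, i.e.\ $\abs{\gamma-\beta}\le n+1$. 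But $\gamma\sim\beta$, so by \Cref{fact:simprop} either $\gamma=\beta$ or $\abs{\gamma-\beta}$ is infinite; the finiteness just obtained forces $\gamma=\beta$. Then $f(\alpha)=(Q-P)(\beta)$, and because $c(P)=c(Q)$ and $(Q-P)(0)=0$, the polynomial $Q-P$ either vanishes or has degree in $\{1,\dots,d-1\}$. In the latter case $(Q-P)(\beta)$ is infinite of archimedean class at least that of $\beta$, contradicting $\abs{f(\alpha)}\ll\beta$; in the former $f(\alpha)=0$, contradicting $f(\alpha)\ne 0$. Thus $d\ge 2$ is impossible, so $d=1$, and then $P(0)=Q(0)=0$ together with $c(P)=c(Q)$ give $P=Q=c(P)\,x$, as required.

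I expect the main obstacle to be the sandwiching step: making precise that the negligibility $\abs{f(\alpha)}\ll\beta$ is strong enough to trap $\gamma$ within a bounded distance of $\beta$ despite the $d$-th power growth of $Q$, which is exactly what \Cref{tecnico lemma} is designed to supply, and then combining this with \Cref{fact:simprop} to collapse $\gamma$ onto $\beta$. The degree and leading-coefficient bookkeeping, including the sign normalisation and the two applications of \Cref{tecnico lemma 2}, is routine but must be carried out with care.
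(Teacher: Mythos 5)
Your proposal is correct and follows essentially the same route as the paper: transfer via \Cref{teo: nonstandard first order properties RW}, degree comparison via \Cref{tecnico lemma 2}, the sandwich of \Cref{tecnico lemma} to pin $\gamma$ near $\beta$, collapse to $\gamma=\beta$, and a contradiction with $f(\alpha)\ne 0$. The only (harmless) reorganisation is that you extract $c(P)=c(Q)$ up front via standard parts, so the sandwich is applied with $c=1$ and $f(\alpha)$ is absorbed by the step sizes of $Q$, whereas the paper keeps $c$ general, absorbs $f(\alpha)$ by sandwiching $P(x)\mp x$ against $Q$, and recovers $\sqrt[d]{c}=1$ afterwards from \Cref{fact:simprop}; likewise your archimedean-class argument ruling out $Q\ne P$ replaces the paper's appeal to \Cref{co:RWbasics}, with no loss.
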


\begin{proof} Let $d_{P}=\deg(P), d_{Q}=\deg(Q)$. By \Cref{teo: nonstandard first order properties RW}, as $\varphi(x,y,z)$ is Ramsey PR in $(x,y),z$ there exist $(\alpha,\beta)\models \mathrm{RW}$ and $\gamma\sim\alpha$ such that $f(\alpha)+P(\beta)=Q(\gamma)$ and $f(\alpha)\ne 0$; note that, by  \Cref{co:RWbasics}(\ref{point:RW8}), $f(\alpha)\ll\beta$. Hence 
\[\beta^{d_{P}}\asymp f(\alpha)+P(\beta) =Q(\gamma)\asymp\gamma^{d_{Q}}.\]
By \Cref{tecnico lemma 2}(\ref{point:teclem2}) we deduce that $d_{Q}=d_{P}$.

Observe that the sign of $f(\alpha)+P(\beta)$ coincides with that of $c(P)$, and that of $Q(\gamma)$ coincides with that of $c(Q)$; as $f(\alpha)+P(\beta)= Q(\gamma)$, necessarily $c\coloneqq{c(P)}/{c(Q)}>0$. Without loss of generality, both $c(P)$ and $c(Q)$ are positive.

Let $d=d_{Q}=d_{P}$. Assume that $d>1$. By  \Cref{tecnico lemma} applied twice, once to the polynomials $P(x)-x$ and $Q(x)$ and once to the polynomials $P(x)+x$ and $Q(x)$, there is $n\in\N$ such that 
\[Q(\sqrt[d]{c}\beta-n)\leq P(\beta)-\beta \leq f(\alpha)+P(\beta)\leq P(\beta)+\beta \leq Q(\sqrt[d]{c}\beta+n).\]
So $Q(\sqrt[d]{c}\beta-n)\leq Q(\gamma)\leq Q(\sqrt[d]{c}\beta+n)$, and as $Q$ is increasing on $\star{\N}\setminus\N$, this entails that $\gamma\in [\sqrt[d]{c}\beta-n,\sqrt[d]{c}\beta+n]$. So there exists $i\in [-n,n]$ such that $\gamma=\floor{\sqrt[d]{c}\beta}+i$, in particular, $\floor{\sqrt[d]{c}\beta}+i\sim\beta$, hence $\floor{\sqrt[d]{c}\beta}+i=\beta$. As $\beta$ is infinite, \Cref{fact:simprop}(\ref{point:fideq}) gives $\sqrt[d]{c}=1$ and $i=0$, hence $\gamma=\beta$. But then $f(\alpha)=(Q-P)(\beta)$, which forces $Q-P$ to be constant by \Cref{co:RWbasics}(\ref{point:RWfintoone}).  Since $P(0)=0=Q(0)$, this forces $Q=P$, which gives $f(\alpha)=0$, against the assumptions. Hence $d=1$.

 So we are in the following case:
\[f(\alpha)+c(P)\beta=c(Q)\gamma.\]
As $c(Q)\gamma\sim c(Q)\beta$ and  $f(\alpha)\ll\beta$, we conclude by  \Cref{tecnico lemma 2}(\ref{point:teclem1}).\end{proof}

In the particular case where $f(\alpha)$ is a monomial in $\alpha$, we can say more:

\begin{cor}\label{cor: poly applications} Let $P(x),Q(x)\in \Z[x]$ be nonzero polynomials with $P(0)=Q(0)=0$. Let $a,n\in\N$, and let $\varphi(x,y,z)\coloneqq  ax^{n}+P(y)=Q(z)$. If $\varphi(x,y,z)$ is Ramsey PR in $(x,y),z$ then $\varphi(x,y,z)$ is an integer multiple of $x+y=z$ or of $-x+y=z$.
\end{cor}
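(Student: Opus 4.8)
The plan is to derive the statement from \Cref{teo: poly bounds RW} together with a short analysis of the monomial $ax^{n}$. First I would apply \Cref{teo: poly bounds RW} with $f(x)\coloneqq ax^{n}$; note that the extra clause $f(x)\neq0$ is automatic here, since $\mathbb N$ consists of positive integers and $a\geq1$, so Ramsey PR of $\varphi$ coincides with Ramsey PR of the formula in the theorem. This yields $P=Q$ with $\deg P=1$; write $P(x)=Q(x)=bx$, $b\in\mathbb Z\setminus\set0$. Using \Cref{teo: nonstandard first order properties RW} (equivalently \Cref{teo:RWequiv}) I fix a witnessing Ramsey pair $(\alpha,\beta)$ and $\gamma\sim\alpha$ with $a\alpha^{n}+b\beta=b\gamma$; recall $\alpha\sim\beta$ by \Cref{co:RWbasics}(\ref{point:RW5}) and $f(\alpha)<\beta$ for every $f$ by \Cref{co:RWbasics}(\ref{point:RW8}), so $\alpha^{k}\ll\beta$ for all $k$. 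Rewriting, $a\alpha^{n}=b(\gamma-\beta)$, and it remains to prove $n=1$ and $a=\abs b$; the two resulting equations, $\gamma=\alpha+\beta$ when $b>0$ and $\gamma=\beta-\alpha$ when $b<0$, are precisely $x+y=z$ and $-x+y=z$ up to the factor $a$.

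The key reduction I would isolate is that \emph{all} the remaining content is the single claim $\abs{\gamma-\beta}\sim\alpha$. Indeed, granting this, pushing forward by $x\mapsto\abs b x$ (\Cref{fact:simprop}(\ref{point:simpushf})) gives $\abs b\,\abs{\gamma-\beta}\sim\abs b\alpha$, while $\abs b\,\abs{\gamma-\beta}=a\alpha^{n}$; since $x\mapsto ax^{n}$ and $x\mapsto\abs b x$ are bounded-to-one, \Cref{pr:bddto1} forces $a\alpha^{n}=\abs b\alpha$, whence $\alpha(a\alpha^{n-1}-\abs b)=0$ and, as $\alpha$ is infinite, $n=1$ and $a=\abs b$.

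To attack $\abs{\gamma-\beta}\sim\alpha$ I would first extract divisibility information from $\gamma\sim\alpha$: reducing $b\gamma=a\alpha^{n}+b\beta$ modulo $p^{k}$ and using $\gamma\equiv\beta\equiv\alpha\pmod{p^{k}}$ (all three are $\sim$-equivalent) gives $a\alpha^{n}\equiv0\pmod{p^{k}}$ for every standard $k$, so $v_{p}(\alpha)$ is infinite for each prime $p$. Applying $v_{p}$ to the equivalence $a\alpha^{n}+b\beta\sim b\alpha$ (a consequence of $\gamma\sim\alpha$) and using the dichotomy of \Cref{co:RWbasics}(\ref{point:RW7}) for $v_{p}(\alpha)$ against $v_{p}(\beta)$, in the generic case $v_{p}(\alpha)\ll v_{p}(\beta)$ one has $nv_{p}(\alpha)<v_{p}(\beta)$, so the valuation of the sum is $nv_{p}(\alpha)\sim v_{p}(\alpha)$, and \Cref{fact:simprop}(\ref{point:fideq}) yields $nv_{p}(\alpha)=v_{p}(\alpha)$, hence $n=1$; the coefficient $a=\abs b$ is then read off at the leading $p$-adic digit via \Cref{rem:exsecondacifra} and \Cref{tecnico lemma 2}(\ref{point:teclem1}).

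I expect the genuine obstacle to be the degenerate situation in which $v_{p}(\alpha)=v_{p}(\beta)$ for all small primes. There the exponent $n$ is invisible to both the bottom ($v_{p}$) and the top ($\ell_{p}$, which only sees the dominant term $b\beta$, as $\alpha^{n}\ll\beta$) of the base-$p$ expansion: the subdominant term $\gamma-\beta\asymp\alpha^{n}$ is hidden between these two scales, and $\gamma\sim\alpha$ is by itself consistent with an arbitrary archimedean gap between $\gamma$ and $\alpha$. Ruling out $n\geq2$ here is where the finer theory of Ramsey's witnesses must enter — concretely, the tensor-square-limit description of \Cref{thm:rwprojinttens} (or, equivalently, the coheir viewpoint) to control $\tp(\beta+(a/b)\alpha^{n})$, supplemented by an archimedean-growth computation through $\ell_{p}$ feeding into Rado's Theorem in the manner of \Cref{tecnico lemma 2}(\ref{point:teclem2}). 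This is the step I would budget the most effort for; the fact that the $n=1$ difference configuration $\beta-\gamma\sim\alpha$ is not solvable by tensor pairs (\Cref{rem:noETdifference}) is a warning that bare tensor structure is insufficient and that the full closure $\mathrm{RW}=\overline{\mathrm{TS}}$ is needed.
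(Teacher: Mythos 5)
Your skeleton coincides with the paper's proof up to, and including, the generic case: reduce via \Cref{teo: poly bounds RW} to an equation $a\alpha^n+b\beta=b\gamma$ with $(\alpha,\beta)\models\mathrm{RW}$ and $\gamma\sim\alpha$, fix a large prime $p$, split according to the dichotomy $v_p(\alpha)=v_p(\beta)$ versus $v_p(\beta)>\dcl(\alpha)$ from \Cref{co:RWbasics}(\ref{point:RW7}), and in the latter case conclude $n=1$ from $nv_p(\alpha)\sim v_p(\alpha)$ and $a=b$ from the leading digit --- this is exactly what the paper does. The genuine gap is the degenerate case $v_p(\alpha)=v_p(\beta)$, which you explicitly leave open, and it is not peripheral: it is precisely where $n\geq 2$ still has to be excluded (e.g.\ for $x^2+y=z$ and $-x+y=z^n$, the headline applications), and it is the only place the alternative $a=-b$ (the $-x+y=z$ outcome) can arise, since your generic case only ever produces $a=b$. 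Note also that your ``key reduction'' $\abs{\gamma-\beta}\sim\alpha$ is not a reduction: the equation gives $b(\gamma-\beta)=a\alpha^n$ exactly, so that claim is literally a restatement of the conclusion $n=1$, $a=\abs{b}$.

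Moreover, your diagnosis of what the degenerate case needs is misdirected: no tensor-square-limit description (\Cref{thm:rwprojinttens}), coheir viewpoint, or Rado-type archimedean argument enters. The paper closes it with an elementary digit comparison using nothing beyond $\alpha\sim\beta\sim\gamma$. For $n>1$: one has $v_p(a\alpha^n)=nv_p(\alpha)>v_p(\alpha)=v_p(\beta)$ whenever $v_p(\alpha)\neq 0$ (and your own divisibility observation, that $p^k\mid a\alpha^n$ for all standard $k$, already guarantees $v_p(\alpha)$ is infinite --- a correct remark that would in fact streamline the paper's subcase distinction there), hence $v_p(\gamma)=v_p(\beta)$; comparing the base-$p$ digit in position $nv_p(\gamma)$ on both sides of $a\alpha^n+b\beta=b\gamma$, and using that the map sending $t$ to the digit of $bt$ in position $p^{\,n v_p(t)}$ has finite image and therefore agrees on $\beta\sim\gamma$, yields $a\smod_p(\alpha)^n\equiv 0\pmod p$, a contradiction. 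For $n=1$ the same bookkeeping shows $v_p(\beta)=v_p(\gamma)$ is impossible, whence $v_p(\beta)<v_p(\gamma)$ and the digit in position $v_p(\beta)$ reads off $a\equiv -b\pmod p$, i.e.\ $a=-b$. The phenomenon you correctly sense via \Cref{rem:noETdifference} --- that tensor pairs cannot realise the difference configuration, so the full closure $\mathrm{RW}=\overline{\mathrm{TS}}$ matters --- concerns the positive direction (\Cref{pr:ramseydiff}), not the constraints you must derive here.
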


\begin{proof} Straight from \Cref{teo: poly bounds RW}, and using the same notations, we get that, by our hypotheses, there is some $c\in\Z$ such that $\varphi(x,y,z)$ is $ax^{n}+cy=cz$. Again, by \Cref{teo: nonstandard first order properties RW}, as $\varphi(x,y,z)$ is Ramsey PR in $(x,y),z$ there exist $(\alpha,\beta)\models\mathrm{RW}$ and $\gamma\sim\alpha$ such that $a\alpha^{n}+c\beta=c\gamma$.

Fix a prime number $p$ much larger than $a,c$ (in particular, $v(a)=0=v(c)$), and write $\alpha,\beta,\gamma$ in base $p$. In the same notation of \Cref{sums and products}, as $(\alpha,\beta)\models\mathrm{RW}$, by \Cref{co:RWbasics}(\ref{point:RWfrw}) and \Cref{co:RWbasics}(\ref{point:RW8}) we have only two possible cases: $v(\alpha)=v(\beta)$ or $v(\alpha)\ll v(\beta)$.

\paragraph{Case $v(\alpha)=v(\beta)$.} We divide this case in the two subcases $n>1$ and $n=1$. 

If $n>1$,  we observe that $v(a\alpha^{n})=v(a)+nv(\alpha)=nv(\alpha)$. If $v(\alpha)\ne 0$ then $nv(\alpha)>v(\alpha)=v(\beta)$, so necessarily $v(\beta)=v(\gamma)$. The same also holds if $v(\alpha)=0$ because $\alpha\sim\beta\sim\gamma$. Let $d(t)$ be the coefficient of $p^{n v(t)}$ in the base $p$ expansion of $t$. The $n v(\gamma)$ position in the base $p$ expansions of both sides of the equation $a\alpha^{n}+c\beta=c\gamma$ gives us that $a\smod(\alpha)^{n}+d(c\beta)\equiv d(c\gamma)\pmod p$. As $d$ has finite image we have $d(c\beta)= d(c\gamma)$. This implies $a\smod(\alpha)^{n}\equiv 0\pmod p$, which cannot happen since $a,\smod(\alpha)$ are not divided by $p$.

If $n=1$, then $v(a\alpha^{n})=v(\alpha)=v(\beta)$. Using the same notations as above, if $v(\beta)=v(\gamma)$ then again the equality $a\alpha+c\beta=c\gamma$  gives us $(a+c)\smod(\alpha)\equiv c\smod(\alpha)\pmod p$, which is impossible. Hence $v(\beta)<v(\gamma)$, so it must be $ad+cd\equiv 0\pmod p$, namely $a\equiv -c\pmod p$. As $p$ is much larger than $a,c$, this tells us that $a=-c$.

\paragraph{Case $v(\alpha)\ll v(\beta)$.} In this case, $v(\gamma)=v(a\alpha^{n}+c\beta)=nv(\alpha)$ which, as $v(\gamma)\sim v(\alpha)$, forces $n=1$. It follows that that $a\smod(\alpha)\equiv c\smod(\gamma)\pmod p$, which forces $a\equiv c\pmod p$. Again, as $p$ is much larger than $a,c$, this tells us that $a=c$.\end{proof}

The above result tells us that several well-known PR equations of the form $ax^{n}+P(y)=Q(z)$ are not Ramsey PR in $(x,y),z$, for example all equations of the form $x-y=Q(z)$ for $\deg(Q)>1$ and $Q(0)=0$ (see \cite{BFMC96} for a proof of their partition regularity). Interestingly, such equations can be PR over $\mathbb Q$, as discussed in the introduction. The reason that our argument breaks over $\mathbb Q$ is simply due to the fact that $\mathbb Q$ contains numbers of arbitrarily small (negative) $p$-adic valuation.

As for $\pm x+y=z$ (over $\mathbb N$), we have the following.

 \begin{prop}\label{pr:ramseydiff} Both $x+y=z$ and $-x+y=z$ are Ramsey PR in $(x,y),z$.\end{prop}

\begin{proof} As always, by \Cref{teo: nonstandard first order properties RW} it suffices to find $(\alpha,\beta)\models\mathrm{RW}$ and $\gamma\sim\alpha$ such that $\alpha+\beta=\gamma$ or $-\alpha+\beta=\gamma$. The $x+y=z$ case is well-known: it suffices to take $\alpha$ such that $\tp(\alpha)$ is additively idempotent\footnote{The existence of such ultrafilters is also well-known, see e.g.~\cite[Theorem 2.5]{hindmanAlgebraStoneCechCompactification2011}.} and $\beta$ such that $(\alpha,\beta)\models\mathrm{TS}$, so that by letting $\gamma=\alpha+\beta$ we have that $\gamma\sim\alpha$.

  For $-x+y=z$, pick any infinite  $\eta_{1}\sim\eta_{2}\sim\eta_{3}$ with $(\eta_{1},\eta_{2},\eta_{3})$ tensor. Let $\alpha=\eta_{2}-\eta_{1}, \beta=\eta_{3}-\eta_{1},\gamma=\eta_{3}-\eta_{2}$. Then $(\alpha,\beta)\models\mathrm{RW}$ by \Cref{co:intfunrw} and $\alpha\sim\beta\sim\gamma$ as they are all images of equivalent pairs under the same standard function. Clearly, $\beta-\alpha=\gamma$.\end{proof}

\begin{co}\label{co:proddiv}
  The equations $xy=z$ and $y=zx$ are Ramsey PR in $(x,y),z$.
\end{co}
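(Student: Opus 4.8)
The plan is to derive both equations from \Cref{pr:ramseydiff} by pushing a suitable Ramsey pair forward under exponentiation, thereby converting the additive equations $x+y=z$ and $-x+y=z$ into the multiplicative one $xy=z$. Concretely, by \Cref{pr:ramseydiff} there is a Ramsey pair $(\sigma,\tau)$ with $\upsilon\sim\sigma$ such that $\sigma+\tau=\upsilon$, and I would set $\alpha\coloneqq 2^{\sigma}$, $\beta\coloneqq 2^{\tau}$, $\gamma\coloneqq 2^{\upsilon}$. Then $\alpha\cdot\beta=2^{\sigma+\tau}=2^{\upsilon}=\gamma$, so the equation $xy=z$ is satisfied; similarly, from a Ramsey pair realising $-x+y=z$ one obtains the relation $y=zx$ after exponentiating, since $-\sigma+\tau=\upsilon$ gives $2^{\tau}=2^{\upsilon}\cdot 2^{\sigma}$, i.e.\ $\beta=\gamma\cdot\alpha$.

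The steps would then be: first apply \Cref{teo: nonstandard first order properties RW} (via \Cref{teo:RWequiv}) to reduce the Ramsey partition regularity of $xy=z$ in $(x,y),z$ to the existence of a Ramsey pair $(\alpha,\beta)$ together with $\gamma\sim\alpha$ satisfying $\alpha\beta=\gamma$. Second, take the witnesses $(\sigma,\tau)\models\mathrm{RW}$ and $\upsilon\sim\sigma$ with $\sigma+\tau=\upsilon$ provided by \Cref{pr:ramseydiff}, and push them forward through the map $n\mapsto 2^n$. Third, verify that the image pair $(2^{\sigma},2^{\tau})$ is still a Ramsey pair: this follows because $2^{\sigma}\ne 2^{\tau}$ (as $\sigma\ne\tau$, since $\sigma<\tau$ by \Cref{co:RWbasics}(\ref{point:RW8})) combined with \Cref{co:RWbasics}(\ref{point:RWfrw}), which guarantees that applying a standard function to a Ramsey pair with distinct images yields another Ramsey pair. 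Fourth, check the equivalences: $2^{\upsilon}\sim 2^{\sigma}$ follows from $\upsilon\sim\sigma$ by \Cref{fact:simprop}(\ref{point:simpushf}), so the required $\gamma\sim\alpha$ holds.

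I expect the only genuinely delicate point to be confirming that $\tp(\alpha)=\tp(2^\sigma)$ is the appropriate first projection and that $\gamma=2^\upsilon\sim 2^\sigma=\alpha$, which is exactly what the characterisation in \Cref{teo:RWequiv} demands; but these are immediate pushforward statements once one invokes \Cref{fact:simprop}(\ref{point:simpushf}) and \Cref{co:RWbasics}(\ref{point:RWfrw}). The case $y=zx$ is entirely symmetric, replacing the additive equation $x+y=z$ with $-x+y=z$ from \Cref{pr:ramseydiff}: exponentiating $-\sigma+\tau=\upsilon$ gives $\beta=\alpha\cdot\gamma$, i.e.\ $y=zx$ where $(x,y)=(\alpha,\beta)$ is Ramsey and $\gamma=2^\upsilon\sim 2^\sigma=\alpha$. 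The essential idea is simply that the group isomorphism between $(\mathbb N,+)$ and its image under $n\mapsto 2^n$ transports additive Ramsey configurations to multiplicative ones, and that \Cref{co:RWbasics}(\ref{point:RWfrw}) ensures the Ramsey property survives this transport.
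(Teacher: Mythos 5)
Your proposal is correct and is exactly the paper's argument: the paper likewise takes the witnesses from \Cref{pr:ramseydiff} and passes to the pair $(2^\alpha,2^\beta)$, with the Ramsey property and the equivalences preserved by \Cref{co:RWbasics}(\ref{point:RWfrw}) and \Cref{fact:simprop}(\ref{point:simpushf}) just as you verify. The details you spell out (the pair stays Ramsey because $2^\sigma\ne 2^\tau$, and $2^\upsilon\sim 2^\sigma$ by pushforward) are precisely what the paper's one-line proof leaves implicit.
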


\begin{proof}
By \Cref{pr:ramseydiff} there is $(\alpha,\beta)\models\mathrm{RW}$ with $\pm \alpha+\beta\sim \alpha$. It then suffices to consider the pair $(2^\alpha, 2^\beta)$.
\end{proof}

The equation $-x+y=z$ is the first example we are aware of with the following property.

\begin{rem}\label{rem:noETdifference}
The formula $-x+y=z$ is Ramsey PR in $(x,y),z$, but there are no $(\alpha,\beta)\models\mathrm{TS}$  such that $-\alpha+\beta \sim \alpha$. Namely, the equation $ (-u)\oplus u=u$ has no solutions in $\beta\Z\setminus \mathbb Z$.
\end{rem}

\begin{proof} The fact that such $(\alpha,\beta)$ cannot exist is well-known, see e.g.~\cite[Corollary 13.19]{hindmanAlgebraStoneCechCompactification2011}, but here is a quick argument.
 If $(\alpha,\beta)$ are as above, by Puritz' Theorem we have that $v_3(\beta)\in \mathbb N$ (otherwise $\smod_3(\alpha)=\smod_3(-\alpha)=3-\smod_{3}(\alpha)$, which cannot happen). But then $v_3(\alpha)=v_3(\beta-\alpha)$, a contradiction because $\smod_3(\beta)=\smod_3(\alpha)$, hence $v_{3}(\beta-\alpha)>v_{3}(\alpha)$. 
\end{proof}

 \Cref{cor: poly applications} has several more interesting consequences. One regards the non-Ramsey partition regularity of arithmetic progressions, not even in the simplest case of arithmetic progressions of length $3$. In particular, there is no ``Ramsey version'' of van der Waerden's Theorem.

\begin{prop}\label{no infinite 3-AP} The formula
  \[
    \varphi(x,y,z)\coloneqq \text{``$x,y,z$ form, in some order, an arithmetic progression of length $3$''} 
  \]
is not Ramsey PR in $(x,y),z$. \end{prop}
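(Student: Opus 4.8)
The plan is to reduce the claim to \Cref{cor: poly applications} through the nonstandard characterisation of Ramsey partition regularity. First I would unwind the meaning of ``Ramsey PR in $(x,y),z$'' for this particular $\varphi$. Since $z$ lies in the same block as the pair $(x,y)$, \Cref{teo: nonstandard first order properties RW}(\ref{point:RWchargen}) tells us that $\varphi$ is Ramsey PR in $(x,y),z$ if and only if there exist $(\alpha,\beta)\models\mathrm{RW}$ and $\gamma\sim\alpha$ with $\varphi(\alpha,\beta,\gamma)$; moreover $\alpha\sim\beta$ automatically by \Cref{co:RWbasics}(\ref{point:RW5}), so $\alpha\sim\beta\sim\gamma$. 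Now $\alpha,\beta,\gamma$ forming, in some order, an arithmetic progression of length $3$ means exactly that one of the three \emph{middle-term} equations
\[
  \mathrm{(A)}\ 2x=y+z,\qquad \mathrm{(B)}\ 2y=x+z,\qquad \mathrm{(C)}\ 2z=x+y
\]
holds at $(\alpha,\beta,\gamma)$.

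Next I would show that \emph{none} of (A), (B), (C) is individually Ramsey PR in $(x,y),z$, using \Cref{cor: poly applications}. Each of them can be put in the form $ax^{n}+P(y)=Q(z)$ with $a\in\N$, $n=1$, and $P,Q\in\Z[x]$ vanishing at $0$: concretely (A) reads $2x+(-y)=z$, (B) reads $x+(-2y)=-z$, and (C) reads $x+y=2z$. The associated polynomials $ax^{n}+P(y)-Q(z)$ are $2x-y-z$, $x-2y+z$, and $x+y-2z$ respectively; their coefficient vectors $(2,-1,-1)$, $(1,-2,1)$, $(1,1,-2)$ are not proportional to $(1,1,-1)$ or to $(-1,1,-1)$, so none of these polynomials is an integer multiple of $x+y-z$ or of $-x+y-z$. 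By \Cref{cor: poly applications}, none of (A), (B), (C) is Ramsey PR in $(x,y),z$.

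Finally I would assemble the contradiction. Assume $\varphi$ were Ramsey PR in $(x,y),z$. By the first paragraph we obtain one triple $(\alpha,\beta,\gamma)$ with $(\alpha,\beta)\models\mathrm{RW}$ and $\gamma\sim\alpha$ satisfying at least one of (A), (B), (C); fix one such equation. But then $(\alpha,\beta,\gamma)$ is precisely a witness, in the sense of \Cref{teo: nonstandard first order properties RW}(\ref{point:RWchargen}), to the Ramsey partition regularity of that single equation in $(x,y),z$, contradicting the previous paragraph. Hence $\varphi$ is not Ramsey PR in $(x,y),z$. The argument is routine once this scaffolding is set up; the only delicate point is the logical passage through the disjunction --- that Ramsey PR of $\varphi$ yields a genuine Ramsey witness for one \emph{fixed} equation among (A), (B), (C), which the converse direction of \Cref{teo: nonstandard first order properties RW} then upgrades to Ramsey PR of that equation alone.
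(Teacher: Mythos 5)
Your proposal is correct and takes essentially the same route as the paper's proof: both rewrite $\varphi$ as the disjunction of the three middle-term equations, rule out each disjunct by \Cref{cor: poly applications} (with the same three choices of $a$, $P$, $Q$ up to rearrangement), and handle the disjunction through the nonstandard characterisation of \Cref{teo: nonstandard first order properties RW}. The ``delicate point'' you flag --- extracting a witness for one fixed equation from the disjunction --- is exactly how the paper's argument works as well, so there is nothing to repair.
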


\begin{proof} First of all, let us observe that $\varphi(x,y,z)$ is equivalent to
  \begin{equation}
    \label{eq:1}
    (x+y=2z)\vee (x+z=2y)\vee (y+z=2x),
  \end{equation}
  whence by \Cref{teo: nonstandard first order properties RW} to conclude it suffices to prove that there are no $\alpha\sim\beta\sim\gamma$ satisfying~\eqref{eq:1} and such that $(\alpha,\beta) \models\mathrm{RW}$.
But $\alpha+\beta=2\gamma$ cannot happen due to  \Cref{cor: poly applications} applied to $P(x)=x$ and $Q(x)=2x$. The other two cases are dealt with similarly, by using \Cref{cor: poly applications} respectively with $P(x)=-2x$ and $Q(x)=-x$ and with $P(x)=-x$ and $Q(x)=x$.\end{proof}

Another application of \Cref{cor: poly applications} regards ultrafilter equations. It is known that equations of the form $au\oplus bu=cu$, where $a,b,c\in\Z\setminus\{0\}$ and $u\in\beta\N$, can be solved only when $a=b=c$ (see~\cite[Theorem~13.18]{hindmanAlgebraStoneCechCompactification2011},  and see~\cite{malekiSolvingEquationsBetaN2000} for other results of this kind). By \Cref{cor: poly applications} we know that much more is true:

\begin{prop}\label{prop Maleki} Let $P(x),Q(x)\in \Z[x]$ be nonzero polynomials with no constant term. Let $a,n\in\N$. The only equations of the form\footnote{Here $u^n$ is the image of the ultrafilter $u$ under the polynomial function $x\mapsto x^n$. This is distinct from $\underbrace{u\odot\ldots\odot u}_{n\text{ times}}$.} $au^{n}\oplus P(u)=Q(u)$ that have a solution in $\beta\N$ are the multiples of $u\oplus u=u$.\end{prop}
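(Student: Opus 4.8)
The plan is to reduce the solvability of the ultrafilter equation to the existence of a suitable tensor square, apply \Cref{cor: poly applications}, and then exploit the gap between $\mathrm{TS}$ and $\mathrm{RW}$ recorded in \Cref{rem:noETdifference} to discard one of the two resulting cases. Throughout, $u$ is understood to range over $\beta\N\setminus\N$; a principal $u$ merely encodes a numerical solution of $ax^{n}+P(x)=Q(x)$ and is not of interest.

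First I would unwind $\oplus$ in terms of generators. Let $u\in\beta\N\setminus\N$ solve the equation and choose a tensor square $(\alpha,\beta)\models\mathrm{TS}$ generating $(u,u)$, which exists since $u\otimes u\in\mathrm{TS}$ admits a generator. By \Cref{rem:tensorpshfwd} the pair $(a\alpha^{n},P(\beta))$ is tensor, and its projections are the images of $u$ under $x\mapsto ax^{n}$ and under $P$; since $\oplus$ is the pushforward of $\otimes$ under addition, this yields $au^{n}\oplus P(u)=\tp(a\alpha^{n}+P(\beta))$. As $Q(u)=\tp(Q(\alpha))$, the equation $au^{n}\oplus P(u)=Q(u)$ reads $a\alpha^{n}+P(\beta)\sim Q(\alpha)$, and by \Cref{fact:simprop}(3) there is $\gamma\sim\alpha$ with $a\alpha^{n}+P(\beta)=Q(\gamma)$.

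Now $\mathrm{TS}\subseteq\mathrm{RW}$ by \Cref{thm: RW and tensor}, so $(\alpha,\beta)\models\mathrm{RW}$; together with $\gamma\sim\alpha$ and $a\alpha^{n}+P(\beta)=Q(\gamma)$ this witnesses, via \Cref{teo: nonstandard first order properties RW}, that $ax^{n}+P(y)=Q(z)$ is Ramsey PR in $(x,y),z$. Hence \Cref{cor: poly applications} forces $ax^{n}+P(y)-Q(z)$ to be an integer multiple of $x+y-z$ or of $-x+y-z$. In the first case $n=1$ and $P(y)=Q(y)=ay$, so the equation is $au\oplus au=au$, which is the equation $u\oplus u=u$ transported by the injective map $x\mapsto ax$, hence a multiple of it; moreover it does have solutions, namely additively idempotent ultrafilters.

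The second case is where the real work lies, and it is eliminated precisely by \Cref{rem:noETdifference}. Here $n=1$ and $P(y)=Q(y)=-ay$, so $a\alpha+P(\beta)=Q(\gamma)$ becomes $\beta-\alpha=\gamma$; since $\gamma\sim\alpha$ this says $-\alpha+\beta\sim\alpha$ for the tensor square $(\alpha,\beta)\models\mathrm{TS}$, which \Cref{rem:noETdifference} rules out. Thus only the first case survives, and every solvable equation of the given form is a multiple of $u\oplus u=u$. The crux is therefore that, although \Cref{cor: poly applications} leaves open both $x+y=z$ and $-x+y=z$, the ultrafilter equation forces a genuine tensor square rather than a mere Ramsey pair, and only the additive equation admits such a solution.
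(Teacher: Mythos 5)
Your proposal is correct and takes essentially the same route as the paper's proof: pass to a generator $(\alpha,\beta)$ of $u\otimes u$, use $\mathrm{TS}\subseteq\mathrm{RW}$ to invoke \Cref{cor: poly applications}, eliminate the $-x+y=z$ case via \Cref{rem:noETdifference}, and settle the converse with additive idempotents. The only (harmless) difference is that you spell out the intermediate appeal to \Cref{teo: nonstandard first order properties RW} and the unwinding of $\oplus$ via \Cref{rem:tensorpshfwd}, steps the paper leaves implicit.
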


\begin{proof} Left to right, if the equation $au^{n}\oplus P(u)=Q(u)$ holds, then for $(\alpha,\beta)\models u\otimes u$ one has $a\alpha^{n}+P(\beta)\sim Q(\alpha)$. As $\mathrm{TS}\subseteq \mathrm{RW}$,  \Cref{cor: poly applications} forces $\deg(P)=\deg(Q)=n=1$ and $a=c(P)=c(Q)$. By \Cref{rem:noETdifference}, $a=1$. Right to left, any additively idempotent ultrafilter $u$ solves the given equation. \end{proof}

Another application of \Cref{tecnico lemma 2} is the following generalisation of the \nameref{thm:pwsp}.

    \begin{pr}
      Let $n,m\in \mathbb Z$, $k,r,s\in \mathbb N$. Assume that $n+m\ne 0$. The following are equivalent.
      \begin{enumerate}
      \item\label{point:gensumprodTS} There is $(\alpha,\beta)\models \mathrm{TS}$ such that $n\alpha+m\beta\sim k\alpha^r\beta^s$.
      \item\label{point:gensumprodRW} There is $(\alpha,\beta)\models \mathrm{RW}$ such that $n\alpha+m\beta\sim k\alpha^r\beta^s$.
      \item\label{point:gensumprodrsnm} $r=s=1$, one of $n,m$ equals $0$, and the other one is positive.
    \end{enumerate}
    \end{pr}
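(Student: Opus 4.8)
The plan is to prove the cyclic implications $\ref{point:gensumprodTS}\Rightarrow\ref{point:gensumprodRW}\Rightarrow\ref{point:gensumprodrsnm}\Rightarrow\ref{point:gensumprodTS}$, since this is a statement asserting the equivalence of three conditions. The implication $\ref{point:gensumprodTS}\Rightarrow\ref{point:gensumprodRW}$ is immediate from the inclusion $\mathrm{TS}\subseteq\mathrm{RW}$, established after \Cref{thm: RW and tensor}. The substantive content lies in the remaining two implications, and I expect the heart of the argument to be the forward direction $\ref{point:gensumprodRW}\Rightarrow\ref{point:gensumprodrsnm}$, which rules out all configurations except the admissible one.

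\textbf{The forward implication.} Suppose $(\alpha,\beta)\models\mathrm{RW}$ with $n\alpha+m\beta\sim k\alpha^r\beta^s$. I would first compare archimedean classes, using \Cref{co:RWbasics} to recall that $\alpha<\beta$ and $\alpha\sim\beta$, and in particular (by point~\ref{point:RW8} and point~\ref{point:RWfintoone}) that $\alpha\ll\beta$ unless the relevant functions collapse; more precisely one knows $\alpha^i\ll\beta$ for every standard $i$ whenever $\beta$ dominates suitable functions of $\alpha$. The left-hand side $n\alpha+m\beta$ has archimedean class that of $\beta$ (since $n+m\ne 0$ is not directly what governs this, but rather $m\beta$ dominates $n\alpha$ as $\alpha\ll\beta$, so the class is that of $\beta$ when $m\ne 0$; the case $m=0$ must be handled separately). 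The right-hand side $k\alpha^r\beta^s$ has archimedean class $\alpha^r\beta^s$. Equivalence forces these classes to match, i.e.\ $\beta\asymp\alpha^r\beta^s$, which, combined with $\alpha\sim\beta$ and \Cref{tecnico lemma 2}(\ref{point:teclem2}) after applying $\ell_p$, pins down $r=0$ and $s=1$, or $r+s=1$ with appropriate adjustments. I expect this archimedean bookkeeping to eliminate most possibilities and to force $r=s=1$ once the case analysis on which of $n,m$ vanishes is carried out carefully. The sign/positivity constraint ``the other one is positive'' comes from the fact that $\alpha,\beta$ and the product $\alpha^r\beta^s$ are positive hypernaturals, so the surviving coefficient cannot be negative.

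\textbf{The hard part and the converse.} The main obstacle is disentangling the case $m=0$ from $m\ne 0$ in the forward direction, because the domination $\alpha\ll\beta$ behaves differently: when $m=0$ the left side is $n\alpha$, whose class is that of $\alpha$, forcing $\alpha\asymp\alpha^r\beta^s$ and hence $r=1,s=0$—but then the product $k\alpha^r\beta^s=k\alpha$ would need $\alpha\sim k\alpha^r\beta^s$, and one must check this is consistent only with the stated normal form by passing to the valuation $v_p$ and invoking \Cref{rem:exsecondacifra} to extract the leading digit, much as in the proof of \Cref{thm: no sums and products RW}. I would mirror the valuation argument there to exclude the product structure when $s\ge 1$ and $m=0$ simultaneously, forcing the admissible configuration where exactly one of $n,m$ vanishes and $r=s=1$. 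For the converse $\ref{point:gensumprodrsnm}\Rightarrow\ref{point:gensumprodTS}$, one exhibits a witness: with (say) $n=0$ and $m>0$, one needs $(\alpha,\beta)\models\mathrm{TS}$ with $m\beta\sim k\alpha\beta$; taking $u$ a nonprincipal ultrafilter and $(\alpha,\beta)$ a tensor square generator, one selects $u$ so that $\tp(\alpha)$ absorbs the constant $k/m$ appropriately, reducing to finding $u$ with $m\beta\sim k\alpha\beta$, which upon applying $v_p$ and $\ell_p$ becomes a linear condition solvable by choosing $u$ additively idempotent or by a direct construction as in \Cref{pr:ramseydiff}. The construction should be routine once the normal form is known, so I anticipate the genuine difficulty concentrated entirely in the forward direction's case analysis.
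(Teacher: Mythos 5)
Your forward direction rests on a principle that is false: from $n\alpha+m\beta\sim k\alpha^r\beta^s$ you infer that the two sides lie in the same archimedean class (``equivalence forces these classes to match''). Equivalence places no such constraint: every Ramsey pair satisfies $\alpha\sim\beta$ together with $\alpha\ll\beta$ (\Cref{co:RWbasics}), and the proposition you are proving is itself a counterexample, since in the admissible case one exhibits $(\alpha,\beta)\models\mathrm{TS}$ with $m\beta\sim k\alpha\beta$, where $m\beta\asymp\beta\ll\alpha\beta\asymp k\alpha\beta$. The legitimate version of your comparison is what the paper does: push the equivalence forward under the standard function $\ell=\ell_p$ (\Cref{fact:simprop}(\ref{point:simpushf})) to get $\ell(\beta)+\epsilon_0\sim r\ell(\alpha)+s\ell(\beta)+\epsilon_1$ with $\epsilon_0,\epsilon_1$ standard; \emph{now} both sides are provably $\asymp\ell(\beta)$, because $\ell(\alpha)\ll\ell(\beta)$ for a Ramsey pair, so \Cref{tecnico lemma 2}(\ref{point:teclem1}) applies and yields $s=1$. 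Note that this step is intrinsically blind to $r$, precisely because $r\ell(\alpha)\ll\ell(\beta)$ for every standard $r$: no archimedean bookkeeping can ``force $r=s=1$'' as you claim. Pinning down $r=1$ needs a second, genuinely different argument at the level of $p$-adic valuations: \Cref{rem:exsecondacifra} gives $v(n\alpha+m\beta)=v(\alpha)$, hence $v(\alpha)\sim rv(\alpha)+v(\beta)$, and a digit analysis with $\smod$ (splitting on whether $v(v(\alpha))=v(v(\beta))$, the relevant pairs being again Ramsey by \Cref{co:RWbasics}(\ref{point:RWfrw})) forces $rc\equiv c$ or $(r+1)c\equiv c \pmod p$, whence $r=1$. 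Nothing in your proposal plays this role. (Also, in this paper $\mathbb N$ denotes the \emph{positive} integers, so your cases $s=0$, $r=0$, $(r,s)=(1,0)$ are outside the hypotheses; your tentative outcome $r=1,s=0$ when $m=0$ is in any event wrong, since that case too leads to $r=s=1$, with $n>0$.)

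A second gap: condition~\ref{point:gensumprodrsnm} asserts that one of $n,m$ \emph{vanishes}, so after reducing to $n\alpha+m\beta\sim k\alpha\beta$ you must still exclude $n,m$ both nonzero. This is the combinatorial core of the statement---a weighted form of the \nameref{thm:pwsp}---which the paper handles by adapting the digit argument of \Cref{thm: no sums and products RW}, and which archimedean comparisons cannot detect for the reason above; your proposal never addresses it. Finally, in the converse direction your suggestion of an \emph{additively} idempotent $u$ is the wrong tool for $m\beta\sim k\alpha\beta$: the paper takes $\tp(\alpha')$ idempotent for $\odot$ and divisible by $k$, a tensor pair $(\alpha',\beta')$, and rescales $\alpha\coloneqq m\alpha'/k$, $\beta\coloneqq m\beta'/k$. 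That direction is indeed routine, as you anticipated, but via multiplicative, not additive, idempotency.
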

    \begin{proof}
      $\ref{point:gensumprodTS}\Rightarrow\ref{point:gensumprodRW}$ follows from \Cref{thm:tensors witness Ramsey Theorem}.
      
For $\ref{point:gensumprodRW}\Rightarrow\ref{point:gensumprodrsnm}$, assume that $(\alpha,\beta)\models \mathrm{RW}$. Fix a sufficiently large standard prime $p$ and let $v,\ell,\smod$ be relative to this prime.    If  $n\alpha+m\beta\sim k\alpha^r\beta^s$ then, by applying $\ell$ to both sides, and possibly replacing $\ell(\alpha)$ by $\ell(\beta)$ on the left hand side in the case where $m=0$, we find $\epsilon_0, \epsilon_1\in \mathbb N\cup \set0$ such that $\ell(\beta)+\epsilon_0\sim r\ell(\alpha)+s\ell(\beta)+\epsilon_1$. By \Cref{tecnico lemma 2}(\ref{point:teclem1}) we obtain $s=1$.  If $n,m\ne 0$, by \Cref{rem:exsecondacifra} we obtain $v(n\alpha+m\beta)=v(\alpha)$, hence $v(\alpha)\sim r v(\alpha)+v(\beta)$. The same conclusion still holds if one of $n,m$ equals $0$, by an easy calculation. If $c=\smod(v(\alpha))$ this yields $c\equiv rc\pmod p$ or $c\equiv (r+1)c\pmod p$ depending on whether $v(v(\alpha))=v(v(\beta))$ or not. As $r$ is positive and much smaller than $p$, this implies $r=1$.

      We have now reduced to the case $n\alpha+m\beta\sim k\alpha\beta$.  If $n,m$ are both nonzero, an easy modification of the proof of \Cref{thm: no sums and products RW} shows that the required $(\alpha,\beta)$ cannot be found. As $k\alpha\beta>0$, if $n=0$ then we must have $m>0$, and symmetrically with the roles of $n$ and $m$ interchanged. 

For $\ref{point:gensumprodrsnm}\Rightarrow\ref{point:gensumprodTS}$, we assume that $n=0<m$, the other case being analogous. We need to find $(\alpha,\beta)\models \mathrm{TS}$ such that $m\beta\sim k\alpha\beta$. Let $\alpha'$ be such that $\tp(\alpha')$ is idempotent for $\odot$ and divisible by $k$, and let $\beta'$ be such that $(\alpha',\beta')\models \mathrm{TS}$. It then suffices to set $\alpha\coloneqq m\alpha'/k$ and $\beta\coloneqq m\beta'/k$.
    \end{proof}
    \begin{rem}\label{rem:remainingcases}
      If we allow $r,s\in \mathbb Z$, similar arguments as above yield that, assuming $n+m\ne 0$, if there is $(\alpha,\beta)\models \mathrm{RW}$ with $n\alpha+m\beta\sim k\alpha^r\beta^s$ then $(r,s)\in \set{(1,1),(0,1),(1,0),(-1,1)}$. The cases $(0,1)$ and $(1,0)$ are equivalent and yield $n\alpha+m\beta\sim k\beta$. By \Cref{cor: poly applications} this entails $m=k=\pm n$. We do not know what happens in the case $(r,s)=(-1,1)$, except for (trivial consequences of) \Cref{co:proddiv}.
    \end{rem}

Let us consider what happens when we relax the Ramsey partition regularity from $(x,y),z$ to $(x,y)\mid z$. 

\begin{prop}\label{pr: Moreira triples} Let $a,b,c,k_{1},k_{2},k_{3}\in\N$. The following are equivalent:
\begin{enumerate}
\item\label{point:mortrip1} $ax^{k_{1}}+by^{k_{2}}= cz^{k_{3}}$ is Ramsey PR in $(x,y)\mid z$;
\item\label{point:mortrip2} $k_{3}=1$.
\end{enumerate}\end{prop}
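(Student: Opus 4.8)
The plan is to invoke \Cref{teo: nonstandard first order properties RW} to translate the statement into a question about Ramsey pairs: being Ramsey PR in $(x,y)\mid z$ is equivalent to the existence of $(\alpha,\beta)\models\mathrm{RW}$ together with some $\gamma\in\star\N$ which is \emph{unconstrained} — since $z$ sits in its own block, there is no requirement that $\gamma\sim\alpha$ — such that $a\alpha^{k_1}+b\beta^{k_2}=c\gamma^{k_3}$. For the implication $\ref{point:mortrip2}\Rightarrow\ref{point:mortrip1}$ this existence is easy: choosing a nonprincipal $u\in\beta\N$ concentrated on the multiples of $c$ and a generator $(\alpha,\beta)$ of $u\otimes u\in\mathrm{RW}$ (\Cref{thm:tensors witness Ramsey Theorem}), I get $c\mid\alpha$ and $c\mid\beta$, hence $c\mid a\alpha^{k_1}+b\beta^{k_2}$ (recall $k_1,k_2\ge1$); so when $k_3=1$ I may simply set $\gamma=(a\alpha^{k_1}+b\beta^{k_2})/c$.

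For the converse $\ref{point:mortrip1}\Rightarrow\ref{point:mortrip2}$ I would argue by contradiction, assuming $k_3\ge2$. Writing $T\coloneqq\{(m,n)\in\N^2\mid\exists z\in\N\ am^{k_1}+bn^{k_2}=cz^{k_3}\}$, the witnessing Ramsey pair lies in $\star T$, so by \Cref{pr:ET}(\ref{point:RW1}) there is in fact a \emph{tensor} pair $(\alpha,\beta)\models\mathrm{TS}$ with $(\alpha,\beta)\in\star T$; this reduction to a tensor square is what unlocks the coheir machinery, since $\mathrm{RW}$ pairs need not be tensor. As $\alpha$ is infinite, applying \Cref{fact:coheir} first to $T$ and then to $T\cap\{(m,n)\mid m\ne n_1\}$ produces two \emph{distinct standard} naturals, say $n_1<n_2$, with $(n_1,\beta),(n_2,\beta)\in\star T$ — that is, the same infinite second coordinate $\beta$ admits two standard first coordinates satisfying the equation. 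Concretely, there are $z_1,z_2\in\star\N$ with $an_i^{k_1}+b\beta^{k_2}=cz_i^{k_3}$.

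Subtracting the two instances is the crux. It yields $a(n_2^{k_1}-n_1^{k_1})=c(z_2^{k_3}-z_1^{k_3})$, whose left-hand side is a \emph{fixed standard} positive integer, forcing $z_2>z_1$, hence $z_2\ge z_1+1$. But $cz_1^{k_3}\ge b\beta^{k_2}$ is infinite, so $z_1$ is infinite, and for $k_3\ge2$ the gap between consecutive $k_3$-th powers is large: $z_2^{k_3}-z_1^{k_3}\ge(z_1+1)^{k_3}-z_1^{k_3}\ge k_3\,z_1^{k_3-1}$, which is infinite. This contradicts the finiteness of the left-hand side, so $k_3=1$.

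The hard part — and the reason the naive estimates fail — is precisely that $\gamma$ is free: archimedean, valuation, or digit-length comparisons only pin down $\gamma^{k_3}\asymp\beta^{k_2}$ and never force $k_3=1$ on their own (this is exactly the feature distinguishing $(x,y)\mid z$ from the $(x,y),z$ case of \Cref{cor: poly applications}). The decisive idea is instead to fix the \emph{large} coordinate $\beta$ and vary the \emph{small} one among standard values via coheir, turning the rigidity of the exact equation against the sparsity of high $k_3$-th powers. I expect the only points needing care to be the verification that the witnessing pair is genuinely infinite — so that \Cref{pr:ET} and the coheir step apply — and the bookkeeping that extracts two distinct first coordinates $n_1\ne n_2$ rather than a single one.
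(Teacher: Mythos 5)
Your proof is correct, and its key direction $\ref{point:mortrip1}\Rightarrow\ref{point:mortrip2}$ takes a genuinely different route from the paper's. The paper keeps the witnessing pair $(\alpha,\beta)\models\mathrm{RW}$ and the witness $\gamma$ fixed and instead locates $\gamma$: introducing $f(y)\coloneqq\min\{z\in\N\mid cz^{k_3}\ge by^{k_2}\}$, it rules out $\gamma\ge f(\beta)+1$ because that would force $a\alpha^{k_1}\ge c\bigl((f(\beta)+1)^{k_3}-f(\beta)^{k_3}\bigr)$, an unbounded finite-to-one function of $\beta$ when $k_3\ge 2$, contradicting \Cref{co:RWbasics}(\ref{point:RWfintoone}) --- that is, it plays the separation properties of Ramsey pairs (everything definable from $\alpha$ lies below any finite-to-one function of $\beta$) against the gap between consecutive $k_3$-th powers at the scale of $\beta$, concluding that any witness must satisfy $\gamma=f(\beta)$. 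You exploit the same gap estimate, but between \emph{two genuine solutions sharing the large coordinate}: the passage from $\mathrm{RW}$ to a tensor square via \Cref{pr:ET}(\ref{point:RW1}) followed by the double application of \Cref{fact:coheir} to $T$ and to $T\cap\{(m,n)\mid m\ne n_1\}$ (legitimate, since $\alpha$ is infinite --- automatic once you are in $\mathrm{TS}$, as the coordinates generate a nonprincipal ultrafilter, so your closing worries dissolve) yields standard $n_1\ne n_2$ with $an_i^{k_1}+b\beta^{k_2}=cz_i^{k_3}$, and subtracting equates the fixed standard quantity $a(n_2^{k_1}-n_1^{k_1})$ with a difference of $k_3$-th powers above an infinite threshold, which is infinite for $k_3\ge 2$. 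Each approach buys something: the paper's uses the finer structure of Ramsey pairs from \Cref{co:RWbasics} and pins down $\gamma$ exactly, while yours never needs to analyse where $\gamma$ sits; moreover, you use only that the solution set $T$ contains a tensor pair, equivalently some $[H]^2$, so your argument in fact refutes the weaker colour-free statement and unwinds to a purely finitary proof (for $h_1<h_2<h_3$ in $H$, subtract the equations for $(h_1,h_3)$ and $(h_2,h_3)$ and let $h_3\to\infty$). The direction $\ref{point:mortrip2}\Rightarrow\ref{point:mortrip1}$ is essentially the paper's: it takes $(\alpha,\beta)\models\mathrm{RW}$ with $c\mid\alpha$ (whence $c\mid\beta$, as $\alpha\sim\beta$) and sets $\gamma=(a\alpha^{k_1}+b\beta^{k_2})/c$; your explicit choice of a tensor square concentrated on $c\N$ is just a concrete instance of this.
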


\begin{proof} $\ref{point:mortrip1}\Rightarrow \ref{point:mortrip2}$ Assume, by contrast, that $k_{3}>1$. By \Cref{teo: nonstandard first order properties RW} there exist $(\alpha,\beta)\models \mathrm{RW}$ and $\gamma\in\star{\N}$ such that $a\alpha^{k_{1}}+b\beta^{k_{2}}= c\gamma^{k_{3}}$. Let $f\from \N\rightarrow\N$ be defined as follows:
\[f(y)\coloneqq \min\{z\in\N\mid cz^{k_{3}}\geq by^{k_{2}}\}.\]
We claim that $f(\beta)=\gamma$. In fact, trivially $f(\beta)\leq \gamma$. If $\gamma\geq f(\beta)+1$ then $c\gamma^{k_{3}}\geq c(f(\beta)+1)^{k_{3}}$. Write $c(f(\beta)+1)^{k_{3}}=cf(\beta)^{k_{3}}+R(\beta)$, hence $a\alpha^{k_{1}}+b\beta^{k_{2}}\geq cf(\beta)^{k_{3}}+R(\beta)$. Observe that, as $k_{3}\geq 2$, we have that $R(\beta)$ is a polynomial in $\beta$ of degree at least $1$.  As $cf(\beta)^{k_{3}}\geq b\beta^{k_{2}}$ by definition of $f$, we deduce that $a\alpha^{k_{1}}\ge R(\beta)$, and this is in contrast with \Cref{co:RWbasics}(\ref{point:RWfintoone}).

$\ref{point:mortrip2}\Rightarrow \ref{point:mortrip1}$ We just have to take $(\alpha,\beta)\models \mathrm{RW}$ with $c\mid\alpha$ and  let $\gamma=\left( a\alpha^{k_{1}}+b\beta^{k_{2}}\right)/c$.\end{proof}

\begin{esem} Let us show two examples.
\begin{enumerate}
\item As recalled before, all equations of the form $-x+y=cz^{n}$ are PR, but the only ones that are Ramsey PR in $(x,y)\mid z$ are those of the form $-x+y=cz$, among which the only one that is Ramsey PR in $(x,y),z$ is $-x+y=z$.
\item Recall (\Cref{eg:pythPPR}) that the Pythagorean equation $x^{2}+y^{2}=z^{2}$ is PPR in $x,y$, equivalently, BPR in $x,y\mid z$. However, by \Cref{pr: Moreira triples} it is not Ramsey PR in $(x,y)\mid z$, hence it is also not Ramsey PR in $(x,y),z$.\footnote{This could have also been deduced directly from \Cref{cor: poly applications}.}
\end{enumerate}
\end{esem}

The following general result, close in spirit to the polynomial applications developed so far, shows that many equations involving sums of arbitrary functions are not Ramsey PR.

\begin{pr}\label{lemma mcpyth}
  Let $f\from \mathbb N\to \mathbb N$ be such that $n\mapsto f(n+1)-f(n)$ is finite-to-one. Then for every $g\from \mathbb N\to \mathbb  N$ the equation $g(x)+f(y)=f(z)$ is not Ramsey PR in $(x,y)\mid z$.
\end{pr}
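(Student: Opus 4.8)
The plan is to refute Ramsey partition regularity through its nonstandard reformulation and then to reach a contradiction from a single magnitude comparison. By \Cref{teo: nonstandard first order properties RW}, specialised to the block pattern in which $z$ occupies a block of its own (the first block being empty, so that no equivalence is imposed between $z$ and $\alpha$), the equation $g(x)+f(y)=f(z)$ is Ramsey PR in $(x,y)\mid z$ precisely when there exist a Ramsey pair $(\alpha,\beta)\models\mathrm{RW}$ and some $\gamma\in\star\N$ with $g(\alpha)+f(\beta)=f(\gamma)$. I would therefore assume such $\alpha,\beta,\gamma$ and aim for a contradiction.

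First I would compare $\gamma$ with $\beta$. Since $g$ is $\N$-valued we have $g(\alpha)\ge 1$, so $f(\gamma)=f(\beta)+g(\alpha)>f(\beta)$; as $f$ is increasing this forces $\gamma>\beta$, hence $\gamma\ge\beta+1$. Writing $h(n)\coloneqq f(n+1)-f(n)$ and using monotonicity once more, I obtain the lower bound
\[
  g(\alpha)=f(\gamma)-f(\beta)\ge f(\beta+1)-f(\beta)=h(\beta).
\]

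The crux is that $h$ is, by hypothesis, a finite-to-one function $\N\to\N$, so \Cref{co:RWbasics}(\ref{point:RWfintoone}) applies to it and yields $\psi(\alpha)<h(\beta)$ for every $\psi\from\N\to\N$; taking $\psi=g$ gives $g(\alpha)<h(\beta)$, in direct contradiction with the displayed inequality. This rules out the existence of $(\alpha,\beta,\gamma)$ and hence establishes the claim. The only delicate point I foresee is the opening reduction: one must read off from \Cref{teo: nonstandard first order properties RW} that, for the pattern $(x,y)\mid z$, the witness $\gamma$ carries no constraint linking it to $\alpha$, exactly as in the proof of \Cref{pr: Moreira triples}. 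After that, the substance is carried entirely by \Cref{co:RWbasics}(\ref{point:RWfintoone}): the finite-to-one hypothesis makes a single increment $h(\beta)$ dominate all of $\dcl(\alpha)$, while monotonicity of $f$ pins the quantity $g(\alpha)\in\dcl(\alpha)$ at or above that very increment.
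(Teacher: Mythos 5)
Your proof is correct and is essentially the paper's own argument: both reduce via \Cref{teo: nonstandard first order properties RW} to showing no $(\alpha,\beta)\models\mathrm{RW}$ and $\gamma\in\star\N$ satisfy $g(\alpha)+f(\beta)=f(\gamma)$, and both conclude by applying \Cref{co:RWbasics}(\ref{point:RWfintoone}) to the finite-to-one increment map to get $g(\alpha)<f(\beta+1)-f(\beta)$. The paper phrases the endgame as $f(\beta)<f(\beta)+g(\alpha)<f(\beta+1)$ forcing $f(\beta)+g(\alpha)$ out of the image of $f$, while you derive $\gamma\ge\beta+1$ and contradict the same inequality --- the identical computation read in the contrapositive direction (and your implicit use of monotonicity of $f$ matches the paper's, both justified since the increments lie in $\N$).
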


\begin{proof} As always, by \Cref{teo: nonstandard first order properties RW} it suffices to fix $(\alpha,\beta)\models\mathrm{RW}$ and $\gamma\in\star{\N}$ and prove that $g(\alpha)+f(\beta)\neq f(\gamma)$.  By \Cref{co:RWbasics}(\ref{point:RWfintoone}) we have $f(\beta+1)-f(\beta)>g(\alpha)$, hence $f(\beta)<f(\beta)+g(\alpha)<f(\beta+1)$. It follows that $f(\beta)+g(\alpha)$ is not in the image of $f$.
\end{proof}

\begin{esem} By \Cref{lemma mcpyth} the equation $x+2^{y}=2^{z}$ is not Ramsey PR in $(x,y)\mid z$. \end{esem}

One may ask whether there is a version of \Cref{thm: no sums and products RW} involving products and exponentials in place of sums and products. This, and much more, is indeed true, and can be proven by using the following observation.

\begin{pr}\label{lemma:minedhbfs}
Let $\alpha,\beta$ be infinite and $f,g\from \mathbb N^2\to \mathbb N$. If there are standard functions $\phi_0, \phi_1,h$ such that
\begin{enumerate}
\item $\phi_0(f(\alpha,\beta))=\phi_1(\beta)$,
\item $\phi_0(g(\alpha,\beta))=h(\phi_1(\beta))$, and
\item\label{point:mined3} $h(\phi_1(\beta))\ne \phi_1(\beta)$,
\end{enumerate}
then $f(\alpha,\beta)\nsim g(\alpha,\beta)$.
\end{pr}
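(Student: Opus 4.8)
The plan is to argue by contradiction, reducing the statement to the rigidity of $\sim$ recorded in \Cref{fact:simprop}(\ref{point:fideq}), according to which a standard function can only $\sim$-identify a point with its own image if it actually fixes that point. Suppose, towards a contradiction, that $f(\alpha,\beta)\sim g(\alpha,\beta)$. The first step is to push both sides forward under the standard function $\phi_0$: since applying a standard function preserves $\sim$ by \Cref{fact:simprop}(\ref{point:simpushf}), we obtain $\phi_0(f(\alpha,\beta))\sim \phi_0(g(\alpha,\beta))$.

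Next I would substitute the hypotheses into this equivalence. By assumption~(1) the left-hand side equals $\phi_1(\beta)$, and by assumption~(2) the right-hand side equals $h(\phi_1(\beta))$. Setting $\delta\coloneqq \phi_1(\beta)$, the equivalence becomes $\delta\sim h(\delta)$, i.e.\ $h(\delta)\sim \delta$. At this point \Cref{fact:simprop}(\ref{point:fideq}), applied to the standard function $h$ and the point $\delta$, forces $h(\delta)=\delta$, that is $h(\phi_1(\beta))=\phi_1(\beta)$. This directly contradicts assumption~(\ref{point:mined3}), so the assumed equivalence cannot hold and we conclude $f(\alpha,\beta)\nsim g(\alpha,\beta)$.

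I do not expect any genuine obstacle here: the statement is essentially a bookkeeping device that packages the fact that $\sim$ cannot identify $\phi_1(\beta)$ with $h(\phi_1(\beta))$ unless $h$ fixes $\phi_1(\beta)$. The only point requiring care is to verify that the three hypotheses align exactly so that the pushforward along $\phi_0$ converts the assumed equivalence of $f(\alpha,\beta)$ and $g(\alpha,\beta)$ into a self-equivalence of the single element $\phi_1(\beta)$ under $h$; once this alignment is observed, \Cref{fact:simprop}(\ref{point:fideq}) closes the argument in one line, with no need for infiniteness of $\phi_1(\beta)$ since principal types make $\sim$ and $=$ coincide on standard elements anyway.
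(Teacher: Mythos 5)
Your proof is correct and coincides with the paper's own argument: both push the assumed equivalence $f(\alpha,\beta)\sim g(\alpha,\beta)$ forward along $\phi_0$ via \Cref{fact:simprop}(\ref{point:simpushf}), substitute the hypotheses to obtain $\phi_1(\beta)\sim h(\phi_1(\beta))$, and then invoke \Cref{fact:simprop}(\ref{point:fideq}) to force $h(\phi_1(\beta))=\phi_1(\beta)$, contradicting assumption~(\ref{point:mined3}). Your closing observation that no infiniteness of $\phi_1(\beta)$ is needed is also accurate.
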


\begin{proof}
 Otherwise $\phi_0(f(\alpha,\beta))\sim \phi_0(g(\alpha,\beta))$, hence $  \phi_1(\beta)\sim h(\phi_1(\beta))$, and this would imply $h(\phi_1(\beta)) = \phi_1(\beta)$.
\end{proof}
Note that assumption~\ref{point:mined3} is true if, e.g., $h$ has finitely many fixed points.

Let us see some example applications of \Cref{lemma:minedhbfs} to some equations involving exponentiation. We begin with the promised ``products and exponentials'' version of \Cref{thm: no sums and products RW}. 

\begin{eg}\label{eg:expprod}
  The formula $(x^y=z) \land (xy=t)$ is not Ramsey PR in $(x,y)\mid z,t$.
\end{eg}

\begin{proof}
    Let $f(x,y)=x^y$ and $g(x,y)=xy$, and suppose there is $(\alpha,\beta)\models \mathrm{RW}$ with $f(\alpha,\beta)\sim g(\alpha,\beta)$. We apply \Cref{lemma:minedhbfs} by showing that these two object have different, but very close, exponential level (see e.g.~\cite[Definition~5.2]{berarducciSurrealNumbersDerivations2018}).

Let $\ell\coloneqq\ell_2$, and denote by $\ell^{(k)}$ the $k$-fold composition of $\ell$ with itself.
An easy computation shows that $\ell^{(3)}(f(\alpha,\beta))$ is at finite distance from $\ell^{(2)}(\beta)$, while $\ell^{(3)}(g(\alpha,\beta))$ is at finite distance from $\ell^{(3)}(\beta)$. From this observation we can conclude by applying \Cref{lemma:minedhbfs} with $\phi_0\coloneqq \ell^{(3)}$, $\phi_1\coloneqq\ell^{(2)}$ and $h(x)\coloneqq\ell(x)+n$ for a suitable finite $n\in \mathbb Z$.
  \end{proof}
  \begin{eg}\label{eg:yxxy}
  The formula $(x^y=z) \land (y^x=t)$ is not Ramsey PR in $(x,y)\mid z,t$. 
\end{eg}

\begin{proof}
  This is similar to the proof of \Cref{eg:expprod}, with the same $\phi_0$ and $\phi_1$.
\end{proof}
In particular, there is no pair $(\alpha,\beta)\models \mathrm{TS}$ such that $\alpha^\beta\sim \beta^\alpha$, solving a question from~\cite{luperibagliniExponentiationsUltrafilters2025}.
  \begin{eg}
The formula $(x2^y=z) \land (x+y=t)$ is not Ramsey PR in $(x,y)\mid z,t$.
    \end{eg}
    \begin{proof}
      If there were $(\alpha,\beta)\models\mathrm{RW}$ with $\alpha2^\beta\sim \alpha+\beta$, then $(\gamma,\delta)\coloneqq(2^\alpha,2^\beta)$ would be in $\mathrm{RW}$ and satisfy $\gamma^\delta\sim \gamma\delta$, against \Cref{eg:expprod}.
    \end{proof}

Although we deduced the previous example from \Cref{eg:expprod}, it is also a special case of the following.
    
    \begin{eg}\label{thm:hbfs}
      Let $f\from \mathbb N\to \mathbb N$ be an arbitrary function, $m>1$ a natural number, and $g(x,y)\from \mathbb N^2\to \mathbb N$ a function as follows. There is $d$ such that, for every $n$, we have $g(n,y)\in \mathcal O(y^d)\cap \Omega(y^{\frac 1d})$, that is, there are functions $h_0,h_1\from  \mathbb N\to \mathbb N$ such that for all $n,y$ we have $h_0(n)y^\frac 1d\le g(n,y)\le h_1(n)y^d$.  If $(\alpha,\beta)\models\mathrm{RW}$, then $f(\alpha) \cdot m^\beta\nsim g(\alpha,\beta)$.
    \end{eg}
    \begin{proof}
      This too is similar to the proof of \Cref{eg:expprod}, using $\phi_0=\ell_m^{(2)}$, $\phi_1=\ell_m$, and $h$ of the form $\ell_m(x)+n$.
    \end{proof}

We conclude this section with a result on polynomial functions. We state it directly in terms of Ramsey's witnesses.
    \begin{pr}\label{pr:polypr}
      Let $n,r\in \mathbb Z$, $m,s\in \mathbb N$, and $(\alpha,\beta)\models \mathrm{RW}$.
      \begin{enumerate}
      \item\label{point:nmrslinear} If $n,r,n+m,r+s\ne 0$ and  $n\alpha+m\beta\sim r\alpha+s\beta$, then $(n,m)=(r,s)$.
              \item Let\label{point:polyinbeta} $f(x,y)=\sum_{i=0}^m f_i(x)y^i$ and $g(x,y)=\sum_{j=0}^s g_j(x)y^j$, where  $f_i, g_j\from \mathbb N\to  \mathbb Z$ and $f_m(\alpha)\ne 0\ne g_s(\alpha)$. If $f(\alpha,\beta)\sim g(\alpha,\beta)$, then $m=s$.
      \item If $n,r>0$ and  $\alpha^n\beta^m\sim \alpha^r\beta^s$ then $(n,m)=(r,s)$.
      \end{enumerate}
    \end{pr}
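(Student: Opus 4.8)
The plan is to linearise every relation by means of the three standard-prime gadgets $v_p$, $\ell_p=\ell$ and $\smod_p$, and to feed the outputs into \Cref{rem:exsecondacifra}, \Cref{tecnico lemma 2} and the inequalities of \Cref{co:RWbasics}. Two preliminary observations will be used throughout. First, for every $h\from\N\to\N$ and every $K\in\N$ the map $x\mapsto h(x)^K$ is a function $\N\to\N$, so \Cref{co:RWbasics}(\ref{point:RW8}) gives $h(\alpha)^K<\beta$; hence $h(\alpha)\ll\beta$ and, applying $\ell_p$, also $\ell_p(h(\alpha))\ll\ell_p(\beta)$. In words, anything depending only on $\alpha$ is negligible against $\beta$, even on a logarithmic scale. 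Second, applying \Cref{co:RWbasics}(\ref{point:RW7}) to $g=v_p$ shows that either $v_p(\alpha)=v_p(\beta)$ or $v_p(\alpha)<v_p(\beta)$; in all cases $v_p(\alpha)\le v_p(\beta)$.

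For item~\ref{point:nmrslinear} I would first pin down $m,s$. Since $n\alpha,r\alpha\ll\beta$ while $m,s\ge1$, one has $n\alpha+m\beta\asymp\beta\asymp r\alpha+s\beta$ with $\st((n\alpha+m\beta)/\beta)=m$ and $\st((r\alpha+s\beta)/\beta)=s$; as the two sides are equivalent, \Cref{tecnico lemma 2}(\ref{point:teclem1}) forces their ratio to have standard part $1$, so $m=s$. For $n=r$ I fix a prime $p>\abs n+\abs r+2m$ and compare leading $p$-adic digits: by \Cref{rem:exsecondacifra} both sides have $v_p$ equal to $v_p(\alpha)$, and $\smod_p$ is preserved under $\sim$, so reading the digit in position $v_p(\alpha)$ gives $n\,\smod_p(\alpha)\equiv r\,\smod_p(\alpha)\pmod p$ when $v_p(\alpha)<v_p(\beta)$, and $(n+m)\smod_p(\alpha)\equiv(r+m)\smod_p(\alpha)\pmod p$ when $v_p(\alpha)=v_p(\beta)$ (using $\smod_p(\alpha)=\smod_p(\beta)$). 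As $\smod_p(\alpha)\ne0$ and $p$ is large, either case yields $n=r$.

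For item~\ref{point:polyinbeta} the first preliminary observation does the decisive work. Since $f_m(\alpha)\ne0$ and $f_i(\alpha)\ll\beta$ for $i<m$, the leading summand dominates, $f(\alpha,\beta)\asymp f_m(\alpha)\beta^m$, so $\ell_p(f(\alpha,\beta))=\ell_p(\abs{f_m(\alpha)})+\floor{m\log_p\beta}+O(1)$; as $\ell_p(\abs{f_m(\alpha)})\ll\ell_p(\beta)$, this gives $\st(\ell_p(f(\alpha,\beta))/\ell_p(\beta))=m$, and symmetrically the analogous ratio for $g$ is $s$. Applying $\ell_p$ to $f(\alpha,\beta)\sim g(\alpha,\beta)$ and using $m,s\ge1$, both lengths are $\asymp\ell_p(\beta)$ and equivalent, so \Cref{tecnico lemma 2}(\ref{point:teclem1}) yields $m/s=1$, i.e.\ $m=s$.

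For the third item the exact additivity of $v_p$ is the key, as it turns the hypothesis into the linear relation $n\,v_p(\alpha)+m\,v_p(\beta)\sim r\,v_p(\alpha)+s\,v_p(\beta)$. If some standard prime satisfies $v_p(\alpha)\ne v_p(\beta)$, then $(v_p(\alpha),v_p(\beta))\models\mathrm{RW}$ by \Cref{co:RWbasics}(\ref{point:RWfrw}), and item~\ref{point:nmrslinear} applies verbatim (its coefficient hypotheses hold since $n,r>0$ and $n+m,r+s>0$), giving $(n,m)=(r,s)$ at once. Otherwise $v_p(\alpha)=v_p(\beta)$ for all $p$; here I first get $m=s$ as in item~\ref{point:polyinbeta}, and if moreover some common value $v_p(\alpha)=v_p(\beta)=a$ is nonzero, exact additivity gives $(n+m)a\sim(r+m)a$, whence $n=r$ (by \Cref{tecnico lemma 2}(\ref{point:teclem1}) when $a$ is infinite, and by equality of equivalent finite numbers otherwise). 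The main obstacle, which I expect to be the crux of the whole proposition, is the remaining degenerate case $v_p(\alpha)=v_p(\beta)=0$ for \emph{all} standard $p$ (realised e.g.\ by profinitely-trivial generators with $\alpha\equiv1\pmod k$ for all $k$): there the digits give no information, since the relation only forces the vacuous congruence $\smod_p(\alpha)^{\,n-r}\equiv1$, and the distinction between $n$ and $r$ must be read off from size alone. To handle it I would pass to the pair $(\ell_p(\alpha),\ell_p(\beta))$, again Ramsey by \Cref{co:RWbasics}(\ref{point:RWfrw}) as $\ell_p(\alpha)\ll\ell_p(\beta)$, obtaining after linearisation a relation $n\,\ell_p(\alpha)+m\,\ell_p(\beta)+c_1\sim r\,\ell_p(\alpha)+m\,\ell_p(\beta)+c_2$ with standard shifts $c_1,c_2$ from the floors; on a non-degenerate such pair a shift-robust version of the digit computation of item~\ref{point:nmrslinear} closes the argument, and the delicate point is that iterating $\ell_p$ could in principle preserve degeneracy, so the proof must be finished by an overspill/descent step showing that a nonzero gap $n-r$ cannot survive the entire tower of iterated lengths.
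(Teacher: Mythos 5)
Your items~\ref{point:nmrslinear} and~\ref{point:polyinbeta} are correct and essentially coincide with the paper's proof: the same $\asymp$ argument through \Cref{tecnico lemma 2}(\ref{point:teclem1}) pins down $m=s$, and the same large-prime digit comparison via \Cref{rem:exsecondacifra} and the $\sim$-invariance of $\smod_p$ gives $n=r$ in the linear case (the paper uses $\ell_2$ where you use $\ell_p$ in item~\ref{point:polyinbeta}; this is immaterial). Your two preliminary observations, including the dichotomy $v_p(\alpha)=v_p(\beta)$ or $v_p(\alpha)<v_p(\beta)$ from \Cref{co:RWbasics}, are exactly the tools the paper uses.

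Item 3, however, has a genuine gap, and you have located it yourself: the case in which $v_p(\alpha)=v_p(\beta)=0$ for \emph{every} standard prime $p$. This case really occurs (take $\tp(\alpha)$ containing $\set{n!+1\mid n\in\N}$ and $(\alpha,\beta)\models\mathrm{TS}$, so that $\alpha$ and $\beta$ are coprime to every standard prime), so it cannot be dismissed, and your proposed repair is only a sketch: you never prove the ``shift-robust'' variant of item~\ref{point:nmrslinear} needed for the pair $(\ell_p(\alpha),\ell_p(\beta))$, and, as you concede, that pair may itself be degenerate, so the argument terminates in an unproven ``overspill/descent'' claim about a tower of iterated lengths; nothing in the proposal shows that such a descent ends or that a nonzero gap $n-r$ ever becomes visible. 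The paper sidesteps all of this with one idea missing from your proposal: work at a possibly \emph{nonstandard} prime. Letting $\pi(x)$ denote the least prime divisor of $x$, the map $h(x)\coloneqq v_{\pi(x)}(x)$ is a single standard function; one checks $\pi(\alpha^a\beta^b)=\pi(\alpha)$ using \Cref{co:RWbasics}, and then $h(\alpha^n\beta^m)=n\,v_{\pi(\alpha)}(\alpha)+m\,v_{\pi(\alpha)}(\beta)$ where now $v_{\pi(\alpha)}(\alpha)\ge 1$ \emph{by construction}, so degeneracy is impossible. Pushing $\alpha^n\beta^m\sim\alpha^r\beta^s$ forward along $h$ and splitting into the cases $\pi(\alpha)<\pi(\beta)$ (where $v_{\pi(\alpha)}(\beta)=0$ and \Cref{pr:bddto1} gives $n=r$), $\pi(\alpha)=\pi(\beta)$ with $v_{\pi(\alpha)}(\alpha)<v_{\pi(\beta)}(\beta)$ (where the pair of valuations is Ramsey by \Cref{co:RWbasics}(\ref{point:RWfrw}) and your item~\ref{point:nmrslinear} applies), and $\pi(\alpha)=\pi(\beta)$ with equal valuations (cancellation), one concludes. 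Your treatment of the first two regimes is fine; the third regime of your case analysis is where the proof, as written, fails.
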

    \begin{proof}
      \begin{enumerate}[wide=0pt]
      \item By \Cref{co:RWbasics}(\ref{point:RW8}) we have $n\alpha+m\beta \asymp \beta\asymp r\alpha+s\beta$. We apply \Cref{tecnico lemma 2}(\ref{point:teclem1}) and conclude that $m=s$. Now let $p$ be a finite prime much larger than $n+m+r$. Let $c\coloneqq\smod_p(\alpha)=\smod_p(\beta)$. If $v_p(\alpha)<v_p(\beta)$, then $\smod_p(n\alpha+m\beta)=nc$ and $\smod_p(r\alpha+m\beta)=rc$, and it follows that $n=r$. If $v_p(\alpha)=v_p(\beta)$ then, by \Cref{rem:exsecondacifra},
        \[
(n+m)c \equiv         \smod_p(n\alpha+m\beta)\equiv  \smod_p(r\alpha+m\beta)\equiv (r+m)c \pmod p,
\]
hence $n=r$.

\item Observe that $\ell_2(f(\alpha,\beta))\sim \ell_2(g(\alpha,\beta))$. There is $\epsilon\in \set{0,1}$ such that  $\ell_2(f(\alpha,\beta))=m\ell_2(\beta)+\ell_2(f_m(\alpha))+\epsilon$, hence $\st(f(\alpha,\beta)/m\ell_2(\beta))=1$. An analogous computation for $g(\alpha,\beta)$ shows that $\st(g(\alpha,\beta)/s\ell_2(\beta))=1$. It follows from \Cref{tecnico lemma 2}(\ref{point:teclem1}) applied to $f(\alpha,\beta)$ and $g(\alpha,\beta)$ that $\st(f(\alpha,\beta)/g(\alpha,\beta))=1$, hence $m=s$.

  \item We know from the previous point that $m=s$. Let $\pi(n)$ be the smallest prime dividing $n$. Observe that  $\pi(\alpha^a\beta^b)=\pi(\alpha)$. Write
    \begin{equation}
      \label{eq:2}      v_{\pi(\alpha)}(\alpha^a\beta^b)=av_{\pi(\alpha)}(\alpha)+bv_{\pi(\alpha)}(\beta).
    \end{equation}

 If $\pi(\alpha)<\pi(\beta)$, then the last summand is $0$. It follows that $nv_{\pi(\alpha)}(\alpha)\sim rv_{\pi(\alpha)}(\alpha)$, and since $v_{\pi(\alpha)}(\alpha)\ne 0$ we conclude by \Cref{pr:bddto1} that $n=r$.

If $\pi(\alpha)=\pi(\beta)$ we have two subcases. If $v_{\pi(\alpha)}(\alpha)<v_{\pi(\beta)}(\beta)=v_{\pi(\alpha)}(\beta)$  then this pair is in $\mathrm{RW}$ by \Cref{co:RWbasics}(\ref{point:RWfrw}). We then conclude by applying point~\ref{point:nmrslinear} above to this pair, using~\eqref{eq:2}. If instead $v_{\pi(\alpha)}(\alpha)=v_{\pi(\beta)}(\beta)$, then as $v_{\pi(\alpha)}(\beta)=v_{\pi(\beta)}(\beta)$, by~\eqref{eq:2} we have $(n+m)v_{\pi(\alpha)}(\alpha)=(r+m)v_{\pi(\alpha)}(\alpha)$, hence again $n=r$.\qedhere
      \end{enumerate}
    \end{proof}

\section{Open problems}\label{Section: open problems}

The results obtained in this paper open some lines of enquiry  that we believe worth investigating further.

Puritz' Theorem (\Cref{puritz}) gives a simple characterisation of tensor pairs in terms of standard functions. As for Ramsey pairs, in \Cref{co:RWbasics}, we have shown that they have several properties that can be expressed in terms of standard functions.  Most of our applications actually used some of these properties, so finding a Puritz-like characterisation of Ramsey pairs might be helpful in studying other similar combinatorial problems.

\begin{problem} \label{problem:charteriseRW}   Characterise Ramsey pairs in the style of Puritz' Theorem.\end{problem}

 \Cref{problem:charteriseRW} and nonstandard characterisations of related Ramsey properties are the topic of a paper in preparation. Another interesting question regards polynomial equations.

\begin{problem} Is it possible to extend \Cref{cor: poly applications} from monomials to arbitrary polynomials in $x$? To arbitrary multivariate polynomials?\end{problem}

Our proof of \Cref{cor: poly applications} uses the fact that it is simple to write the valuation of a monomial in $\alpha$ in terms of the valuation of $\alpha$; the same does not hold for a generic polynomial in $\alpha$. An extension of \Cref{cor: poly applications} to arbitrary polynomials in $x$ would give an analogous extension of its corollaries, and in particular of \Cref{prop Maleki}.

As observed in \Cref{rem:remainingcases}, some variants of the \nameref{thm:pwsp} remain open. The most interesting ones seem to be the following. We suspect that solving these should provide enough information to complete the classification.

\begin{problem}
  Which of these formulas  are Ramsey PR in $(x,y)\mid z,t$?
  \begin{enumerate}
  \item $(-x+y=z)\land (xy=t)$.
  \item $(-x+y=z)\land (y=tx)$.
  \item $(x+y=z)\land (y=tx)$.
  \end{enumerate}
\end{problem}

Our final question essentially asks to deal with the ``never considered'' cases of \Cref{tab:BlockPartitionRegularityOfVarphiXYZT,tab:RamseyPartitionRegularityOfVarphiXYZT,tab:RamseyPartitionRegularityOfVarphiXYZT2} from \Cref{sec:casestudy}.

\begin{problem} In which partitions of the variables is the formula $\left(x\neq y\right)\wedge \left(x+y=z\right)\wedge \left(x\cdot y= t\right)$ BPR? In which ones is it Ramsey PR?
\end{problem}

\footnotesize
\providecommand{\noopsort}[1]{}

\end{document}